\documentclass[a4paper, 10pt, twoside, notitlepage]{amsart}

\usepackage{amsmath,amscd}
\usepackage{amssymb}
\usepackage{amsthm}
\usepackage{comment}
\usepackage{graphicx, xcolor}
\usepackage{geometry}
\usepackage{mathrsfs}
\usepackage[ocgcolorlinks, linkcolor=blue]{hyperref}

\usepackage{bm}
\usepackage{bbm}
\usepackage{url}

\usepackage[utf8]{inputenc}
\usepackage{mathtools,amssymb,amsmath}
\usepackage{esint}
\usepackage{tikz}
\usepackage{dsfont}
\usepackage{relsize}
\usepackage{url}
\usepackage{xcolor}
\usepackage{graphicx}
\usepackage{mathrsfs}
\usepackage[shortlabels]{enumitem}
\usepackage{lineno}
\usepackage{amsmath}
\usepackage{enumitem}
\usepackage{amsthm} 
\usepackage{verbatim}
\usepackage{dsfont}
\numberwithin{equation}{section}

\usepackage{slashed}
\usepackage{appendix}

\allowdisplaybreaks

\mathtoolsset{showonlyrefs}

\graphicspath{{images}}

\newtheorem{theorem}{Theorem}[section]
\newtheorem{lemma}[theorem]{Lemma}
\newtheorem{definition}[theorem]{Definition}

\newtheorem{proposition}[theorem]{Proposition}

\newtheorem{remark}[theorem]{Remark}

\title[Stable determination of coefficients for the JMGT equation]{Stable determination of coefficients for the nonlinear third-order acoustic equations}

\author[S. Fu]{Song-Ren Fu}
\address{School of Mathematics and Statistics, Northwestern Polytechnical University, Xi'an 710129, People's Republic of China.}
\email{songrenfu@nwpu.edu.cn}

\author[D. Qiu]{Dong Qiu}
\address{School of Mathematics, Zhejiang University, Hangzhou 310058, People's Republic of China.}
\email{qiudong@zju.edu.cn}

\author[T. Zheng]{Tianyi Zheng}
\address{School of Mathematical Sciences and LPMC, Nankai University, Tianjin 300071, People's Republic of China.}
\email{9820250139@nankai.edu.cn}

\author[T. Zhou]{Ting Zhou}
\address{Corresponding author: School of Mathematics, Zhejiang University, Hangzhou 310058, People's Republic of China.}
\email{ting\_zhou@zju.edu.cn}

\newcommand{\R}{{\mathbb R}}

\newcommand{\N}{{\mathbb N}}

\newcommand{\Om}{\Omega}

\DeclareMathOperator{\supp}{supp} 

\begin{document}

\begin{abstract}
In this paper, we study an inverse boundary value problem for a generalized Jordan-Moore-Gibson-Thompson equation with Westervelt-type  nonlinearity and space-dependent coefficients. This third-order (in time) hyperbolic equation models nonlinear ultrasound propagation in viscous and thermally relaxing media. 
Assuming that the diffusivity and the sound speed are known a priori, we investigate the simultaneous stable determination of the friction coefficient, the weak damping coefficient, the potential, and the nonlinear coefficient from the associated Dirichlet-to-Neumann map. The proof combines the finite-difference linearization method with the construction of Gaussian beam and geometric optics solutions. These arguments reduce the inverse problem to stability estimates for (attenuated) geodesic ray transforms under the foliation condition.  As a consequence, we obtain H\"older-type stability estimates of recovering the linear and nonlinear coefficients.

\medskip

\noindent{\bf Keywords.} inverse boundary value problems, nonlinear acoustic equations, stability,  Gaussian beams, geometric optics solutions, geodesic ray transform
\medskip

\noindent{\bf Mathematics Subject Classification (2020)}: Primary 35R30; secondary 35L05
		
\end{abstract}
\maketitle

\setlength{\parskip}{1.0ex} 

\section{Introduction and preliminaries}

This paper addresses the stable determination of multiple unknown coefficients appearing in a generalized Jordan-Moore-Gibson-Thompson (JMGT) equation using boundary measurements. The JMGT equation is a nonlinear acoustic model for high-intensity sound propagation in thermoviscous media. It is commonly derived as a relaxation modification of the classical Westervelt and Kuznetsov equations, motivated by replacing Fourier's heat-flux law with a Maxwell-Cattaneo law involving a finite thermal relaxation time. The linearization of the JMGT equation is often called the Moore-Gibson-Thompson (MGT) equation.  The resulting third-order-in-time hyperbolic model incorporates quadratic acoustic nonlinearity, thermoviscous damping, and relaxation effects, and is therefore relevant for high-intensity ultrasound applications. For more details on the derivation of the (J)MGT equation, we refer to \cite{BKILR} and \cite[Section 1]{fu2026calderon}.

The MGT equation differs from the classical second order wave equation in various topics, such as the analysis of well-posedness, dynamic behavior, controllability, and stability etc. More precisely, this equation displays a variety of dynamical 
behaviors for its solution that depend on the physical parameters appearing in the equation. For instance, the diffusivity affects the stability of solutions to the MGT equation, and the well-posedness of solutions fails, even in the simplest case when $b=0$ (see \cite[Theorem 2.1]{BKILR}). While for the second order equations, the presence of the structural damping is irrelevant for the well-posedness. The controllability properties of the MGT equations, compared to the classical wave equations, are much more complicated, see \cite{LIZAMA20197813}. Even for the one dimensional MGT equation, it is not exact and null controllable by a control supported on the boundary, see \cite{lizama2023boundary}. Moreover, it remains unclear whether the unique continuation property is satisfied by the MGT equations under certain geometric conditions. These features add to the complexity of analyzing the (J)MGT equations. 

Let $\Omega\subset\mathbb R^n$ ($n\ge 3$) be a bounded domain with a smooth boundary $\Gamma\vcentcolon=\partial\Omega$. In this paper, for any set $A\subset\mathbb R^n$, we denote $A_T\vcentcolon=(0,T)\times A$ for $T>0$.
In this work, we study a generalized JMGT equation (with some lower order terms) of Westervelt type in $\Omega_T$, namely
\begin{equation}\label{eq:linear-JMGT-Sec1-intro}
\begin{cases}
(\tau\partial_t^3+\alpha\partial_t^2-b\Delta\partial_t-c^2\Delta+\lambda\partial_t+q)u=\partial_t^2(\xi u^2) & {\rm in}\,\, \Omega_T,\\
u=f  & {\rm on}\,\, \Gamma_T,\\
u(0)=\partial_tu(0)=\partial_t^2u(0)=0  & {\rm in}\,\, \Omega.
\end{cases}
\end{equation}
Here, $\tau$, $\alpha$, $b$, and $c$ denote the relaxation parameter, the friction parameter related to
viscosity, the diffusivity, and the speed of sound, respectively. The lower-order coefficients $\lambda$ and $q$  represent the weak damping and the potential, respectively. The coefficient $\xi$ is the nonlinear parameter arising from the pressure-density relation in the medium, see \cite{BKVN}.

For generality and considering the practical scenarios of the nonlinear model, we assume in this paper that the coefficients $\alpha,b,c,\lambda,q$ and $\xi$ are all space-varying functions. 
The main purpose of this paper is to investigate the stable recovery of  the coefficients $\alpha,\lambda,q$ and $\xi$ simultaneously from  boundary measurements. 

Henceforth, for simplicity, we take the positive constant $\tau=1$ by re-scaling, and assume that the leading coefficient $b$ is a $C^\infty$-function having a positive lower bound $m_0>0$. That is, $\min_{x\in\overline\Omega}b(x)\ge m_0$.  We will sometimes use the notion $\Upsilon\vcentcolon=\alpha-\frac{c^2}{b}$ to denote the critical parameter for the MGT equation. We note that such parameter is widely used in analyzing the long time behavior of solutions, see for example \cite{LASIECKA20157610,BKILR} for more reading.

\subsection{Formulation of the inverse problem}
We first introduce some function spaces that will be frequently used later. For an integer $m\ge0$, let
\begin{equation}\label{def:E-space}
E^m(\Omega_T)
\vcentcolon=
\bigcap_{k=0}^m C^k([0,T];H^{m-k}(\Omega)),
\end{equation}
equipped with the norm 
\[
\|u\|_{E^m(\Omega_T)}
\vcentcolon=
\sum_{k=0}^m\sup_{t\in[0,T]}
\|\partial_t^ku(t)\|_{H^{m-k}(\Omega)}.
\]
Henceforth, $H^r(\Omega_T)$ and $H^r(\Gamma_T)$ denote the standard
Sobolev spaces on the space-time cylinders. For boundary data, define
\begin{equation}\label{def:A-space}
\mathcal A^m(\Gamma_T)
\vcentcolon=
\Big(
\bigcap_{k=0}^{m+1}
C^k([0,T];H^{m+3/2-k}(\Gamma))\Big)\bigcap H^{m+2}(\Gamma_T)
\end{equation}
with norm
\[
\|f\|_{\mathcal A^m(\Gamma_T)}
\vcentcolon=
\sum_{k=0}^{m+1}\sup_{t\in[0,T]}
\|\partial_t^kf(t)\|_{H^{m+3/2-k}(\Gamma)}
+\|f\|_{H^{m+2}(\Gamma_T)}.
\]
We also set
\begin{equation}\label{def:A0-space}
\mathcal A_0^m(\Gamma_T)
\vcentcolon=
\left\{
f\in\mathcal A^m(\Gamma_T):
\partial_t^kf(0,\cdot)=0,\ 0\le k\le m+1
\right\}
\end{equation}
and, for backward problems,
\begin{equation}\label{def:AT-space}
\mathcal A_T^m(\Gamma_T)
\vcentcolon=
\left\{
h\in\mathcal A^m(\Gamma_T):
\partial_t^kh(T,\cdot)=0,\ 0\le k\le m+1
\right\}.
\end{equation}
and
\begin{equation}\label{def:N-space}
\mathcal N_{m+1}(\Gamma_T)
\vcentcolon=
\bigcap_{k=1}^{m+1}
H^k(0,T;H^{m+1-k}(\Gamma)).
\end{equation}
We equip this space with the norm
\begin{equation}\label{def:N-norm}
\|g\|_{\mathcal N_{m+1}}
\vcentcolon=\Bigl(\sum_{k=1}^{m+1}\|g\|_{H^k(0,T;H^{m+1-k}(\Gamma))}^2\Bigr)^{1/2}.
\end{equation}

For a multi-index $\theta\in\mathbb N_0^n$, we use
$|\theta|=\theta_1+\cdots+\theta_n$ and
$\partial^\theta=\partial_{x_1}^{\theta_1}\cdots
\partial_{x_n}^{\theta_n}$. For a fixed integer $K\ge0$, define
\begin{equation}\label{set:U-K-M0}
\mathcal U_K(M_0)
\vcentcolon=
\left\{
h\in C^\infty(\overline\Omega;\mathbb R):
\|h\|_{C^K(\overline\Omega)}\le M_0
\right\},
\end{equation}
where
\[
\|h\|_{C^K(\overline\Omega)}
\vcentcolon=
\sum_{|\theta|\le K}
\|\partial^\theta h\|_{L^\infty(\Omega)}.
\]
For simplicity, we assume that, for a fixed integer $K$ specified in each result,
\begin{equation}
b,c,\alpha,\lambda,q,\xi\in \mathcal U_K(M_0),
\end{equation}
with the same \emph{a priori} bound $M_0$.

Fix an integer $s>n+2$. Let $\delta_{\rm wp}>0$ denote the uniform
small-data threshold in Proposition~\ref{prop:Westervelt} with $m=s$,
and choose $0<\epsilon_0\le\delta_{\rm wp}$. The admissible
Dirichlet data form the closed ball
\begin{equation}\label{set-for-f}
\mathcal U_{\Gamma_T}^{\epsilon_0,s}
\vcentcolon=
\left\{
f\in\mathcal A_0^s(\Gamma_T):
\|f\|_{\mathcal A^s(\Gamma_T)}\le\epsilon_0
\right\}.
\end{equation}
In particular, every initial trace appearing in \eqref{set-for-f} is
well-defined by \eqref{def:A-space}. We formally define the DN map associated with
\eqref{eq:linear-JMGT-Sec1-intro} by
\begin{equation}\label{def:DN-map}
\Lambda_{\alpha,\lambda,q,\xi}\vcentcolon
\mathcal U_{\Gamma_T}^{\epsilon_0,s}\to L^2(\Gamma_T),
\qquad
f\mapsto (b\partial_\nu \partial_tu+c^2\partial_\nu u)|_{\Gamma_T},
\end{equation}
where $u$ is the unique small solution established in Proposition~\ref{prop:Westervelt},
$f\in\mathcal U_{\Gamma_T}^{\epsilon_0,s}$, and $\nu$ is the unit outward normal field along $\Gamma$ of $\Omega$. 

The related inverse problem we mainly consider in this paper is stated as follows.

{\bf Inverse problem:} \emph{Can we recover the coefficients $\alpha,\lambda,q,\xi$ simultaneously by the associated DN map in a stable way?}


Throughout this paper, we sometimes use the notion $C_1\lesssim C_2$ to denote the relation $C_1\le CC_2$ for some $C>0$ independent of $C_1,C_2$.

\subsection{Main assumptions and results}\label{subsection-1.2}
We first present the geometric setting induced by $b(x)$. Let $I_n$ be the $n$-th identity matrix with the standard inner product 
$$\langle x,y\rangle=x_1y_1+\cdots+x_ny_n,\quad \forall x=(x_1,\cdots,x_n),\, y=(y_1,\cdots, y_n)\in\R^n.$$  
We introduce the metric
$g=b^{-1}ds^2$ with the inner product $g(\cdot,\cdot)=\langle\cdot,\cdot\rangle_g$, where $ds^2$ denotes the standard Euclidean metric. Thus, $(\Omega,g)$ is a Riemannian manifold.
Let $D$ be the Levi-Civita connection with respect to the metric $g$. We see that, for functions $\varphi,\psi\in H^1(\Omega)$, $D\varphi=b(x)\nabla\varphi$ and
\begin{equation}
g(D\varphi,D\psi)=\langle D\varphi,D\psi\rangle_g=\langle b(x)\nabla\varphi,\nabla\psi\rangle,
\end{equation}
In order to study the recovery of the coefficients $\alpha,\lambda,q$ and $\xi$, we need to impose some geometric assumption for $(\Omega,g)$ (i.e., for $b$ in $\Omega$). The main assumption of this paper reads as follows.

{\bf Assumption (A):}\, $(\overline\Omega,g)$ is a compact Riemannian manifold with strictly convex boundary, and satisfies the foliation condition.

\noindent Since the foliation condition implies that $(\Omega,g)$ is non-trapping, we have that the diameter of $\Omega$ in the metric $g$ is finite. That is,
\begin{equation}
{\rm diam}_g(\Omega)\vcentcolon=\sup\{{\rm lengths\,\, of\,\, all\,\, geodesics\,\, in\,\, \Omega} \}<\infty.
\end{equation}
In the next section, we will give some more discussions on the foliation condition, and provide some sufficient conditions for the foliation condition to hold.

We are now in a position to state the main theorems of this paper.
\begin{theorem}[Stable determination of $\alpha,\lambda,q$]\label{thm:stable-alp-lam-q}
Let $n\ge 3$, $T>{\rm diam}_g\Omega$, and let the assumption {\bf (A)} hold. Suppose that $\alpha_j,\lambda_j,q_j,\xi_j\in\mathcal U_K(M_0)$ for $j=1,2$ and sufficiently large $K\in\mathbb N$, and $\alpha_1=\alpha_2$ on $\Gamma$. If there exist $\delta>0$ and a sufficiently small constant $\epsilon_0>0$ such that the corresponding DN maps $\Lambda_{\alpha_1,\lambda_1,q_1,\xi_1}$ and $\Lambda_{\alpha_2,\lambda_2,q_2,\xi_2}$ satisfy
\begin{equation}\label{condition-for-DN-map}
\|\Lambda_{\alpha_1,\lambda_1,q_1,\xi_1}(f)-\Lambda_{\alpha_2,\lambda_2,q_2,\xi_2}(f)\|_{L^2(\Gamma_T)}\le\delta,\quad \forall f\in\mathcal U_{\Gamma_T}^{\epsilon_0,m},\quad m> n+1,
\end{equation}
then, there is a constant $\mu_0\in (0,1)$ such that
\begin{equation}
\|\alpha_2-\alpha_1\|_{L^\infty(\Omega)}+\|\lambda_2-\lambda_1\|_{L^\infty(\Omega)}+\|q_2-q_1\|_{L^\infty(\Omega)}\lesssim \delta^{\mu_0}.
\end{equation}
\end{theorem}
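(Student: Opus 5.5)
The plan is to reduce Theorem~\ref{thm:stable-alp-lam-q} to a stability problem for the linearized (MGT) operator and then to (attenuated) geodesic ray transforms on $(\Omega,g)$.

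\emph{Step 1: linearization by finite differences.} Take $f=\varepsilon h$ with $h\in\mathcal A_0^m(\Gamma_T)$ normalized and $\varepsilon\le\epsilon_0$. Using the well-posedness estimates of Proposition~\ref{prop:Westervelt}, write $u_j=\varepsilon v_j+O(\varepsilon^2)$ in $E^m$, where $v_j$ solves the linear MGT problem
\[
(\partial_t^3+\alpha_j\partial_t^2-b\Delta\partial_t-c^2\Delta+\lambda_j\partial_t+q_j)v_j=0,\qquad v_j|_{\Gamma_T}=h .
\]
Then $\varepsilon^{-1}\Lambda_j(\varepsilon h)=\Lambda^{\rm lin}_j(h)+O(\varepsilon)$, so $\|\Lambda^{\rm lin}_1h-\Lambda^{\rm lin}_2h\|\lesssim \delta/\varepsilon+\varepsilon$. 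Choosing $\varepsilon=\delta^{1/2}$ gives a linear DN-map discrepancy of size $\delta^{1/2}$, uniformly over bounded $h$.

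\emph{Step 2: integral identity.} Let $w$ solve the backward adjoint MGT problem with data in $\mathcal A_T^m$. Green's formula, using the flux $b\partial_\nu\partial_t+c^2\partial_\nu$ and the fact that $b,c$ are common, gives
\[
\int_{\Omega_T}\bigl((\alpha_1-\alpha_2)\partial_t^2v_2+(\lambda_1-\lambda_2)\partial_tv_2+(q_1-q_2)v_2\bigr)w\,dx\,dt=\int_{\Gamma_T}(\Lambda^{\rm lin}_1-\Lambda^{\rm lin}_2)(h)\,w,
\]
where $v_2$ is replaced by $v_1$ appropriately. This identity is bounded by $\delta^{1/2}\|w\|$.

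\emph{Step 3: Gaussian beams and geometric optics.} Writing the operator as $\partial_t(\partial_t^2-b\Delta)+(\alpha\partial_t^2-c^2\Delta)+\dots$, the principal part is $\partial_t(\partial_t^2-b\Delta)$. Its characteristic set $\tau^2=b|\xi|^2$ propagates along $g=b^{-1}ds^2$ geodesics, which is why $T>\mathrm{diam}_g\Omega$ is required. I would construct high-frequency solutions $v=e^{i\rho(\psi-t)}(a_0+\rho^{-1}a_1)+R_\rho$ concentrated on a geodesic $\gamma$, where $\psi$ solves the eikonal equation $b|\nabla\psi|^2=1$. The transport equation for $a_0$ involves the subprincipal term, which yields an attenuation factor $\exp\bigl(-\tfrac12\int(\alpha-c^2/b)\bigr)=\exp\bigl(-\tfrac12\int\Upsilon\bigr)$ along $\gamma$. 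Pairing a forward solution with a backward one in Step 2 and letting $\rho\to\infty$ proceeds order by order in $\rho$. At order $\rho^2$, the leading identity isolates $(\alpha_1-\alpha_2)$ multiplied by the product of the two attenuation factors; since $\alpha_1=\alpha_2$ on $\Gamma$, the boundary layers vanish. This yields the geodesic ray transform of $e^{-\int\Upsilon_1-\Upsilon_2\ldots}$-type quantities. More naturally, the amplitudes themselves give $I[\alpha_1-\alpha_2](\gamma)=O(\delta^{1/2}\rho^{N}+\rho^{-1})$. Optimizing over $\rho$ gives a Hölder bound on the ray transform. The next orders in $\rho$, namely $\rho^1$ and $\rho^0$, similarly produce attenuated ray transforms of $\lambda_1-\lambda_2$ and $q_1-q_2$, modulo already-controlled terms in $\alpha_1-\alpha_2$.

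\emph{Step 4: inversion.} Under Assumption (A), I would invoke the stability of the (attenuated) geodesic ray transform under the foliation condition. This gives $\|F\|_{L^2}\lesssim\|I F\|_{H^1}$ or a Hölder variant. Interpolating with the a priori $C^K$ bounds (with $K$ large) then converts the $L^2$ bound into $L^\infty$ with a loss of exponent, giving $\delta^{\mu_0}$. The estimates proceed sequentially: first $\alpha$, then $\lambda$, then $q$.

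\emph{Main obstacle.} The main obstacle is Step 3 at subleading orders. To isolate $\lambda$ and $q$, the lower-order amplitudes $a_1$ and the remainder $R_\rho$ must be controlled in $E^m$ for the third-order MGT equation, where energy estimates are weaker than for the wave equation. Errors coming from $\alpha_1-\alpha_2$ must also be absorbed at these orders. My first attempt would be to perform the Gaussian-beam construction for the factorized operator $\partial_t(\partial_t^2-b\Delta)$ perturbation, using energy estimates with a loss of a fixed number of derivatives. I would accept polynomial losses in $\rho$, which only worsen the Hölder exponent.
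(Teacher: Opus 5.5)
Your outline follows the paper's route: first-order linearization, the Alessandrini identity against a backward adjoint solution, a beam for $L_{\alpha_2,b,c,\lambda_2,q_2}$ paired with one for $L^*_{\alpha_1,b,c,\lambda_1,q_1}$, extraction of $\tilde\alpha,\tilde\lambda,\tilde q$ at orders $\sigma^2,\sigma,1$, X-ray inversion, and interpolation. However, the step that produces the ray transform of $\tilde\alpha$ has a genuine gap.

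The identity necessarily pairs solutions of two \emph{different} operators, so the attenuations do not cancel. Since $b,c$ are common, $\Upsilon_2-\Upsilon_1=\tilde\alpha$, and the product of the principal amplitudes on the ray is proportional to $|\det Y(s)|^{-1}e^{-\frac{\sqrt2}{4}\int_{s_-}^{s}\tilde\alpha(\tau,0)\,d\tau}$. The order-$\sigma^2$ term therefore yields
\[
\int_{s_-}^{s_+}\tilde\alpha(s,0)\,e^{-\frac{\sqrt2}{4}\int_{s_-}^{s}\tilde\alpha(\tau,0)\,d\tau}\,ds .
\]
This is a nonlinear functional of the unknown, not $\mathcal I_{p_0,\zeta}(\tilde\alpha)$. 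Your claim that ``the amplitudes themselves give'' the unweighted transform is unjustified, and an ``attenuated'' inversion theorem does not apply directly when the attenuation is the unknown itself.

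The missing idea is that the integrand is an exact $s$-derivative. The quantity therefore equals $2\sqrt2\bigl(1-e^{-\frac{\sqrt2}{4}\int_{s_-}^{s_+}\tilde\alpha(s,0)\,ds}\bigr)$. Since $\tilde\alpha$ and all geodesic lengths are a priori bounded, the mean value theorem gives $\bigl|\int_{s_-}^{s_+}\tilde\alpha(s,0)\,ds\bigr|\lesssim\bigl|1-e^{-\frac{\sqrt2}{4}\int_{s_-}^{s_+}\tilde\alpha(s,0)\,ds}\bigr|$. This is the unweighted ray transform, to which the $W\equiv1$ stability estimate applies.

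At the next orders the same weight multiplies $\tilde\lambda$ and $\tilde q$. There it is removed via $|1-e^{-x}|\lesssim|x|$, using the $L^\infty$ bound on $\tilde\alpha$ already obtained. The subtracted contributions are of size $\sigma\|\tilde\alpha\|_{L^\infty}$, resp.\ $\sigma^2\|\tilde\alpha\|_{L^\infty}+\sigma\|\tilde\lambda\|_{L^\infty}$, after normalization. Hence $\sigma$ must be a smaller power of $\delta^{-1}$ at each later stage; this is the role of the paper's $k_1,k_2$. Alternatively, you could invoke a weighted X-ray stability estimate whose constants are uniform over a $C^k$-bounded class of nonvanishing weights, but that must be stated and justified, and it is not what you wrote.

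Three further points need correction.

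(a) Under (A), conjugate points are allowed, so your real-phase ansatz $e^{i\rho(\psi-t)}$ with an eikonal solution along the whole ray is not available. You need the complex-phase Gaussian beam. There, the identity $\det(\Im H)\,|\det Y|^2=\det\Im H_0$ cancels the $|\det Y|^{-1}$ factor against the transverse Gaussian integral and leaves a constant ray density.

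(b) The per-ray concentration estimate is not uniform as rays become tangent to $\Gamma$. The paper proves uniform Fermi, Riccati and remainder bounds over the compact ray family. It obtains the error $\sigma^{-1}+\kappa^{-1}\sigma^{-1/4}$ only for incidence angles $\ge\kappa$, and discards grazing rays using that they have length and measure $O(\kappa)$. This, rather than the hypothesis $\alpha_1=\alpha_2$ on $\Gamma$, is how the endpoint layers are handled, and it is needed equally for $\tilde\lambda,\tilde q$. Consequently one controls only $\|\mathcal I(F)\|_{L^2(\partial_-S\Omega)}$. The estimate you need is an $L^2$-based H\"older stability estimate interpolated against $C^2$ bounds, not $\|F\|_{L^2}\lesssim\|\mathcal IF\|_{H^1}$.

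(c) Gaussian-beam data are not bounded in $H^{s+2}(\Gamma_T)$; their norms grow polynomially in $\sigma$. The quadratic remainder and the admissibility constraint $\varepsilon\|f\|_{\mathcal A^s(\Gamma_T)}\le\epsilon_0$ therefore force $\varepsilon$ to be chosen jointly with $\sigma$, not as $\delta^{1/2}$ uniformly in $h$.
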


\begin{theorem}[Stable determination of $\alpha,\lambda,q,\xi$]\label{thm:stable-alp-lam-q-xi}
Let $n\ge 3$ and $b$ be some positive constant such that $\sqrt b T>{\rm diam}\,\Omega=\sup_{x,y\in\Omega}|x-y|$. Assume that $\Omega$ has a smooth and strictly convex boundary. Suppose that $\alpha_j,\lambda_j,q_j,\xi_j\in\mathcal U_K(M_0)$ for $j=1,2$ and sufficiently large $K\in\mathbb N$, and $\alpha_1=\alpha_2$, $\xi_1=\xi_2$ on $\Gamma$. If there exist $\delta>0$ and a sufficiently small constant $\epsilon_0>0$ such that the corresponding DN maps $\Lambda_{\alpha_1,\lambda_1,q_1,\xi_1}$ and $\Lambda_{\alpha_2,\lambda_2,q_2,\xi_2}$ satisfy \eqref{condition-for-DN-map},
then there is a constant $\tilde \mu_0\in (0,1)$ such that
\begin{equation}\label{est:a-lam-q-xi}
\|\alpha_2-\alpha_1\|_{L^\infty(\Omega)}+\|\lambda_2-\lambda_1\|_{L^\infty(\Omega)}+\|q_2-q_1\|_{L^\infty(\Omega)}+\|\xi_2-\xi_1\|_{L^\infty(\Omega)}\lesssim \delta^{\tilde\mu_0}.
\end{equation}
\end{theorem}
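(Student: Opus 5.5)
The plan is to reduce Theorem~\ref{thm:stable-alp-lam-q-xi} to two ingredients. The first is Theorem~\ref{thm:stable-alp-lam-q}, used for the linear coefficients. The second is a new estimate for $\xi$ obtained from the second-order linearization. When $b$ is a positive constant, $g=b^{-1}ds^2$ is a rescaled Euclidean metric, and its geodesics are straight lines. A strictly convex Euclidean domain satisfies the foliation condition, for instance with the function $|x-x_0|^2$ for $x_0$ far away. In the scaled metric we have ${\rm diam}_g\Omega={\rm diam}\,\Omega/\sqrt b<T$. So Assumption {\bf (A)} holds, and Theorem~\ref{thm:stable-alp-lam-q} gives
\[
\|\alpha_2-\alpha_1\|_{L^\infty}+\|\lambda_2-\lambda_1\|_{L^\infty}+\|q_2-q_1\|_{L^\infty}\lesssim\delta^{\mu_0}.
\]
It then remains to estimate $\xi_2-\xi_1$.

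For $\xi$, I will take data $f=\epsilon_1f_1+\epsilon_2f_2$ and form the mixed second-order finite difference
\[
\frac{1}{\epsilon_1\epsilon_2}\Big(\Lambda(\epsilon_1f_1+\epsilon_2f_2)-\Lambda(\epsilon_1f_1)-\Lambda(\epsilon_2f_2)+\Lambda(0)\Big).
\]
By the well-posedness estimates of Proposition~\ref{prop:Westervelt}, this equals the DN data of $w$ up to $O(\epsilon)$ errors. Here $w$ solves the linearized MGT equation $P_jw=2\partial_t^2(\xi_jv_1v_2)$, where $v_k$ solve $P_jv_k=0$ with $v_k=f_k$ on $\Gamma_T$. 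I will then pair this against a solution $v_0$ of the backward adjoint problem, with data in $\mathcal A_T$, and integrate by parts. This yields an integral identity of the form
\[
\int_{\Omega_T}\big(\xi_1v_1^{(1)}v_2^{(1)}-\xi_2v_1^{(2)}v_2^{(2)}\big)\partial_t^2v_0\,dx\,dt
= O\big(\delta/(\epsilon_1\epsilon_2)+\epsilon\big)\|f\|\cdots.
\]
This holds modulo terms coming from the differences of the linear coefficients, which are already controlled by $\delta^{\mu_0}$ through energy estimates for the difference of the two linear solutions. Optimizing in $\epsilon$ gives a right-hand side of size $\delta^{\mu_1}$ times polynomial powers of the norms of the data.

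Next I will insert geometric optics / Gaussian beam solutions with large frequency $\sigma$. For the MGT operator $\partial_t^3+\alpha\partial_t^2-b\Delta\partial_t-c^2\Delta+\dots$ with constant $b$, the principal part factors as $\partial_t(\partial_t^2-b\Delta)$, so the phases solve the wave eikonal equation with speed $\sqrt b$. I will choose $v_1,v_2$ with phases $\sigma(t-x\cdot\omega/\sqrt b)$ and $v_0$ with phase $-2\sigma(t-x\cdot\omega/\sqrt b)$, all propagating along the same direction, so that the oscillations cancel in the triple product. The amplitudes satisfy transport equations along lines. Their attenuation factors involve $\alpha-c^2/b$, which is known from the first step up to $\delta^{\mu_0}$, and cutoffs $\chi(x-(t\sqrt b)\omega+y)$ are concentrated near a line. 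In the limit $\sigma\to\infty$ the leading term becomes an attenuated (in fact, for equal weights, essentially plain) X-ray transform of $\xi_1-\xi_2$ along lines parallel to $\omega$ through $y$. Because the $\partial_t^2$ produces the factor $\sigma^2$ on both sides, I will normalize appropriately. The remainders are $O(\sigma^{-1})$. The replacement of $\alpha_2$ by $\alpha_1$ in the attenuations costs $\delta^{\mu_0}$ multiplied by polynomial powers of $\sigma$, and the data norms grow like $\sigma^{s+2}$.

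Finally I will combine the pieces. I will use the stability of the Euclidean X-ray transform: $\xi_1-\xi_2$ is compactly supported after extension, since $\xi_1=\xi_2$ on $\Gamma$ and, with the higher a priori regularity from $K$, the difference can be extended by zero suitably. I will then interpolate between the resulting $H^{-1}$-type estimate and the a priori $C^K$ bound to get an $L^\infty$ bound, and choose $\sigma=\delta^{-\gamma}$ to balance the $\sigma^{-1}$ remainders against the $\sigma^N\delta^{\mu}$ terms. This produces $\delta^{\tilde\mu_0}$. I expect the main obstacle to be the bookkeeping in the second-order linearization. There, the linear coefficients differ, so $v_k^{(1)}\ne v_k^{(2)}$. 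The difference must be bounded by $\delta^{\mu_0}$ times a power of $\sigma$ via energy estimates for the third-order operator, keeping track of the $b$-dependent loss of derivatives, while keeping all exponents positive after optimization.
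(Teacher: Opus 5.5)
Your skeleton matches the paper's:
\begin{itemize}
\item Theorem~\ref{thm:stable-alp-lam-q} for the linear coefficients (your check of Assumption (A) for constant $b$ is what the paper uses implicitly);
\item second-order finite differences and the Alessandrini identity against a backward adjoint solution;
\item GO solutions with phases $\sigma\phi_\omega,\sigma\phi_\omega,-2\sigma\phi_\omega$ along one line;
\item a ray-transform stability estimate, followed by interpolation.
\end{itemize}
However, two steps of the $\xi$ part fail as stated.

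\emph{The weight does not cancel.} In the paper's notation the forward leading amplitude solves $X_\omega a_0+\frac{\Upsilon_2}{2}a_0=0$ and the adjoint one solves $X_\omega d_0-\frac{\Upsilon_1}{2}d_0=0$. For a transverse profile $\chi$ the cubic product is therefore
\[
a_0^2d_0=\chi^3(x+\sqrt b\,t\,\omega-p)\,e^{-\frac12\int_0^t(\Upsilon_2+\tilde\alpha)(x+\sqrt b(t-\tau)\omega)\,d\tau}.
\]
One forward attenuation survives, unlike in the bilinear identity for $\alpha,\lambda,q$. Dropping the $\tilde\alpha$ part costs only $\|\tilde\alpha\|_{L^\infty(\Omega)}$. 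What remains is the attenuated transform $\int\tilde\xi(\gamma(l))e^{-\int_0^l\Upsilon_2(\gamma(r))/(2\sqrt b)\,dr}\,dl$. Its attenuation is of order one and depends on the unknown $\alpha_2$. So plain Euclidean X-ray stability cannot be applied. You need a stability estimate for the weighted transform $I_W$ that is uniform over weights with $\|W\|_{C^k}+\|W^{-1}\|_{L^\infty}\lesssim1$; the paper uses \cite[Theorem 5.1]{bohr2021stability} with a scalar weight.

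\emph{GO amplitudes with a fixed profile do not concentrate as $\sigma\to\infty$.} After division by $\sigma^2$, the leading term tends to $\int\chi^3(z)\,[\text{attenuated ray integral of }\tilde\xi\text{ along the line through }p+z]\,dz$. This is a transversal average, not the transform along a single line. Using it directly would require a negative-order stability estimate for the attenuated transform, which is not standard.

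The paper therefore introduces a second small parameter, $\Psi_\ell(z)=\ell^{-n/3}\Psi(z/\ell)$ with $\int\Psi^3=1$.
\begin{itemize}
\item The correction amplitudes then grow like $\ell^{-n/3-|\beta|-2k}$ and the remainders like $\sigma^{s+1-N}\ell^{-2N-s-2+n/6}$.
\item This forces the regime $\sigma\ge\ell^{-(6N+6)}$, and your one-parameter optimization becomes a choice of $(\sigma,\ell,\varepsilon)$ as powers of $\delta$.
\item Your $O(\sigma^{-1})$ remainders and $\sigma^{s+2}$ data bounds must carry these $\ell$-losses.
\end{itemize}
The localization error $|J_\ell-J_0|\lesssim\ell$ is where $\xi_1=\xi_2$ on $\Gamma$ is used. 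It makes the zero extension of $\tilde\xi$ Lipschitz, and no more; a larger $K$ does not make it smoother.

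Finally, $\sqrt b\,T>{\rm diam}\,\Omega$ is needed in this step too, not only for Theorem~\ref{thm:stable-alp-lam-q}. You must choose $p$ so that the support of $\Psi_\ell(x+\sqrt b\,t\,\omega-p)$ lies uniformly outside $\overline\Omega$ at $t=0$ and $t=T$. This puts the GO data in $\mathcal A_0^s(\Gamma_T)$ and gives terminal compatibility for the adjoint, uniformly over all rays. That uniformity is what yields an $L^\infty$ bound on the whole ray space.
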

Below, we give some remarks.
\begin{remark}
In our proof, the stable recovery of the coefficients relies on the geodesic ray transform in the metric $g$ (see Section \ref{sec:geodesic-ray} for the introduction of such transform). For the stable recovery of \(\xi\), since a suitable stability result for the Jacobi-weighted geodesic ray transform of the first kind is not available, we restrict to the case where \(b\) is a positive constant and apply the stability theory for the Euclidean attenuated geodesic ray transform.
 We note that, to simplify the analysis, the explicit H\"older stability exponents are not given in the above Theorems and are not sharp. 
\end{remark}
\begin{remark}
The stable determination of the time-dependent $\xi$ was studied in the work \cite{Fu03052026}, where the coefficients $\alpha,b,c$ are all constants and $\lambda=q=0$ in $\Omega$. However, for a time-dependent nonlinear coefficient $\xi$, it is not known whether the estimate \eqref{est:a-lam-q-xi} remains valid when $b$ is a general positive space-varying function.
\end{remark}


\subsection{Related literature}
Inverse problems for hyperbolic equations have been actively studied. After the seminal works \cite{belishev1987approach} that developed the well-known boundary control (BC) method, there is
an extensive literature related to the recovery of time independent coefficients for hyperbolic equations.  An introduction to the method can be found in \cite{belishev2011boundary}. However, we note that the (J)MGT equation has poor controllability, which is a  significant difference from the classical second-order (in time) wave equation, see \cite{LIZAMA20197813}. Hence it seems that the BC method can not be applied to the (J)MGT equation.

Apart from this classical method, for the linear wave equation with space-varying wave speed, Bao and Zhang \cite{bao2014sensitivity} investigated the problem of how sensitive or stable is it to recover the velocity field from the dynamic DN map and characterized how a small change in the dynamic DN map affects the recovered velocity field. We also mention Kurylev, Lassas and Uhlmann's work \cite{kurylev2018inverse} that introduced a powerful approach to solve inverse coefficient problems for nonlinear hyperbolic equations on manifolds. This method relies on examining families of solutions that depend on several parameters, and on performing linearizations simultaneously with respect to each small parameter. Stable determination results for nonlinear (and linear) coefficients appearing in semilinear wave equations, in both Euclidean and manifold settings, can be found, e.g., in \cite{chen2025stable,lassas2022uniqueness,lassas2025stability} and the references therein.

The method of constructing geometric optics (GO) solutions for wave equations is also extensively used in determining coefficients, although this method requires much more restrictive geometric conditions than the Boundary Control method does. In \cite{kian2017unique,kian2016recovery}, by using GO solutions, the authors studied inverse problems for wave equations concerning the unique determination of the linear potential and damping coefficients from partial observations on the boundary. Moreover \cite{Aicha_2015,kian2016stability,kumar2026holder} considered the stability in inverse problems of determining linear coefficients by GO solutions together with some integral transforms.


In \cite{acosta2022nonlinear} the authors considered the recovery of the nonlinear coefficient for a nonlinear wave equation of Westervelt type. The second order linearization together with Gaussian beams were used. The recovery comes from inverting the weighted geodesic ray transform, if one assumes the domain has no conjugate points or satisfies the foliation condition. After this work, \cite{uhlmann2023inverse} considered a more general nonlinear term and allowed cut points or conjugate points without foliation conditions. Moreover, if the wave speed is unknown, they proved the unique recovery of it from the first order linearization of the DN map. We also mention the work \cite{wendels2025stable} that addressed the simultaneous stable determination of the nonlinear coefficient and the wave speed appearing in the nonlinear Westervelt equations.

In some of the mentioned works, the construction of Gaussian beam solutions is an effective method to study the recovery of coefficients. Gaussian beams were originally introduced in \cite{babich1981complex} and were later used for the first time in inverse problems in \cite{belishev1992boundary}. An advantage of Gaussian beams is that they allow the existence of conjugate points on the Riemannian manifold. We also refer readers to \cite{feizmohammadi2021recovery} for a comprehensive representation in the recovery of a time-dependent magnetic vector-valued potential and an electric scalar-valued potential on a Riemannian manifold from the knowledge of the DN map. The authors used Gaussian beams to reduce the inverse problem to the inversion of the light ray transform of the unknown
coefficients. By constructing Gaussian beams with reflections on the boundary, the work \cite{liu2025partial} proved that the nonlinearity of order three or higher in a semi-linear wave equation on a Lorentzian manifold with boundary can be determined up to natural obstructions by the partial DN map, where the measurement set is arbitrarily small. 

Let us also mention that Gaussian beams together with suitable (higher-order) linearization method have been extended by e.g., the works \cite{feizmohammadi2022recovery,uhlmann2021inverse} and the references therein for inverse nonlinear (elastic) wave equations. However, there are few results available in the literature concerning inverse problems of (J)MGT equations. 
We note that the construction of Gaussian beam solutions for the MGT equation is more complicated compared to the classical second order (in time) wave equation. Moreover, the unknown critical parameter $\Upsilon(x)$ will appear in the constructed Gaussian beam solutions, which creates difficulties in analyzing the geodesic ray transforms of the coefficients.

The works \cite{ArancibiaLecarosMercadoZamorano+2022+659+675,LiuTriggiani+2013+825+869} studied the inverse problem of determining the time-independent coefficient $\alpha(x)$ appearing in the MGT equation. A Lipschitz stability of recovering  $\alpha(x)$ was obtained by Carleman estimates. In the past three years, these results were extended to inverse local and nonlocal (J)MGT equations addressing the stable and unique determination of time-dependent coefficients, see \cite{fu2024inverse,fu2026calderon,Fu03052026}. 
Recently, the works \cite{qiu2026gauge} and \cite{qiu2026inverse} respectively studied inverse boundary value problems for JMGT equations with sources on a simple Riemannian manifold, and inverse JMGT equations  with Westervelt and Kuznetsov nonlinearities on a manifold that satisfies certain geometric conditions. We note that the two papers mainly focus on the unique determination of coefficients for JMGT equations. We also mention the works \cite{kaltenbacher2025acoustic,kaltenbacher2025imagingnonlinearitycoefficientsound} that considered the uniqueness and stability of reconstruction of two coefficients (sound speed and nonlinearity parameter) in the JMGT equation.

In this paper, we are concerned with the inverse problem of simultaneously recovering the coefficients from the knowledge of the associated DN map in a stable way. The inverse problem for the JMGT equation is considerably more involved than its second-order counterparts due to the third-order derivative structure, the simultaneous presence of several unknown coefficients, and the nonlinear dependence of the Gaussian beam amplitudes on the critical parameter \(\Upsilon=\alpha-c^2/b\). In particular, the contributions of \(\alpha\), \(\lambda\), and \(q\) appear at different asymptotic orders, which leads to a successive recovery procedure, while the appearance of \(\Upsilon\) in the transport equations produces weighted geodesic X-ray transforms whose weights depend on the unknown coefficients. Moreover, stability estimates require all Gaussian beams, remainder terms, boundary traces, and concentration estimates to hold uniformly over the full family of geodesics, including grazing ones. The recovery of the nonlinear coefficient \(\xi\) further requires higher-order linearization together with suitable geometric optics solutions. These features constitute the main difficulties in obtaining simultaneous stability for the multiple coefficients. To the authors' best knowledge, this is the first work addressing the stable determination of multiple space-varying coefficients for the JMGT equation.

The rest of this paper is organized as follows. In Section \ref{sec:geodesic-ray}, we introduce the geodesic ray and light ray transforms and the foliation condition. Some examples that verify such condition will be also given in this section. Section \ref{sec:well-posedness} focuses on the well-posedness of the forward linear and nonlinear problems. In Section \ref{sec:Gaussian-beams}, we construct Gaussian beam solutions for the linearized equation. In Section \ref{sec:lin}, we present the linearization method and establish some Alessandrini type identities for the unknown coefficients. In Section \ref{sec-proof}, we give the proof of the main theorems in this present paper. Some conclusions are given in the last Section \ref{sec:conclusion}.

\section{Geodesic ray transform and the foliation condition}\label{sec:geodesic-ray}

In this section, we will introduce some geometric notions and some ray transforms.
Recall that $(\Omega,g)$ is a compact Riemannian manifold with smooth boundary $\Gamma$. Let $S\Omega$ denote the unit tangent bundle of $\Omega$. For any $(x_0,\zeta)\in S\Omega$, let $\gamma_{x_0,\zeta}(\cdot)$ denote the unit speed geodesic starting at point $x_0$, in the direction $\zeta\in\Omega_{x_0}$, where $\Omega_{x_0}$ denotes the tangent space at $x_0$. We now define the set
\begin{equation}
\partial_-S\Omega\vcentcolon=\{(x_0,\zeta)\in S\Omega:\, x_0\in\Gamma, \langle\zeta,\nu_g(x_0)\rangle_g<0\},
\end{equation}
where $\nu_g$ is the outward normal unit vector field of $(\Omega,g)$ along the boundary $\Gamma$.
For any $(x_0,\zeta)\in S\Omega$, we define $\tau_{\rm exit}(x_0,\zeta)$ by
\begin{equation}
\tau_{\rm exit}(x_0,\zeta)\vcentcolon=\inf\{t>0:\, \gamma_{x_0,\zeta}(t)\in \Gamma, \gamma'_{x_0,\zeta}(t)\notin \Gamma_{\gamma_{x_0,\zeta}(t)}\}.
\end{equation}
It is clear that $\tau_{\rm exit}(x_0,\zeta)\le{\rm diam}_g(\Omega)<\infty$ if $(\Omega,g)$ is non-trapping. 
 Henceforth, for the sake of brevity, we use the term maximal geodesic to refer to the geodesics $\gamma_{x_0,\zeta}(\cdot)$  (or $\gamma(\cdot)$ in short) with $(x_0,\zeta)\in\partial_-S\Omega$, over their maximal interval $(0,\tau_{\rm exit})$. 
We next recall the definition of geodesic ray transform along a geodesic $\gamma$ in $\Omega$.
\begin{definition}
Let $(x_0,\zeta)\in\partial_-S\Omega$ and $\gamma_{x_0,\zeta}:(0,\tau_{\rm exit})\to \Omega$ be a unit speed geodesic. The geodesic ray transform of a scalar function $h\in C(\Omega)$ is defined by
\begin{equation}
\mathcal I_{x_0,\zeta}(h)=\mathcal I_\gamma(h)(x_0,\zeta)\vcentcolon=\int_0^{\tau_{\rm exit}}h(\gamma(t))dt.
\end{equation}
\end{definition}

For the geodesic $\gamma$ given as above, we can find the corresponding null geodesic $\vartheta=(t,\gamma(t))$ on the Lorentz manifold $(\mathbb R\times\Omega,\bar g)$, where $\bar g=-dt^2+g$. We parametrize maximal null geodesics through $\vartheta(t)=(s+t,\gamma(t))$ for $s\in\mathbb R$ and $t\in (0,\tau_{\rm exit})$. The light ray transform $\mathcal J_\vartheta$ of a scalar function $\tilde h\in C(\Omega_T)$ along $\vartheta$ is defined by
\begin{equation}
\mathcal J_\vartheta(\tilde h)(s,x_0,\zeta)\vcentcolon=\int_0^{\tau_{\rm exit}}\tilde h(\vartheta(t))dt,\quad (s,x_0,\zeta)\in\mathbb R\times\partial_-S\Omega.
\end{equation}
We note that the above transforms can also act on tensors and $m$-forms for $m\ge 1$.

We recall the foliation condition ensures that a manifold can be foliated by strictly convex hypersurfaces with respect to the metric. We note that the foliation condition allows the existence of conjugate points, and implies the non-trapping property of $(\Omega,g)$, see e.g., \cite{uhlmann2016inverse}. By \cite{paternain2019geodesic}, we know that the foliation condition is satisfied if there is a smooth strictly convex function on $(\Omega,g)$. Thus, if there is a strictly convex function, then the injectivity of the geodesic transform $\mathcal I_\gamma$ on $\partial_-S\Omega$ holds. Moreover, the inversion of $\mathcal I_\gamma$ under the foliation condition is stable (see \cite[Section 1 and Theorem 4.1]{uhlmann2016inverse}). Such results were extended to weighted geodesic ray transforms and light ray transforms related to the stationary Lorentz manifold $(\mathbb R\times\Omega,\bar g)$,  see \cite{paternain2019geodesic,feizmohammadi2021light,lassas2020light,oksanen2025interplay,SU04}  for more details.

We note that these ray transforms have essential applications in the recovery of coefficients (including the metrics) appearing in geometric elliptic and wave equations, see an incomplete list \cite{paternain2023geometric,feizmohammadi2021recovery,stefanov2018inverse,vasy2021light,uhlmann2021inverse} and the references therein.



We proceed to introduce a sufficient condition that guarantees the existence of strictly convex functions on $(\Omega,g)$.
Generally, let $\tilde g=b^{-1}g_0$ be the conformal metric of a given metric $g_0$, and let $D_0$ and $\widetilde D$ respectively be the Levi-Civita connections in the metric $g_0$ and $\tilde g$.  Then by the fundamental theorem of the Levi-Civita connection \cite[Theorem 4.3.1]{jost2005riemannian},  for any real-valued vector fields $X,Y$ on $\Omega$, we have
\begin{equation}\label{relation-connection}
\widetilde D_XY=D_{0X}Y-\frac{1}{2b}(Xb)Y-\frac{1}{2b}(Yb)X+\frac{\langle X,Y\rangle_{g_0}}{2b}D_0b.
\end{equation}
We show the following lemma:
\begin{lemma}
Let $\psi\in C^2(\overline\Omega)$ be a strictly  convex and positive function in the Riemannian manifold $(\Omega,g_0)$. That is, 
there is a constant $\varrho>0$ such that
\begin{equation}
D_0^2\psi(X,X)\ge \varrho|X|^2_{g_0},\quad \forall X\in\Omega_x,\, x\in\Omega.
\end{equation}
If the conformal factor $b$ satisfies the condition
\begin{equation}\label{condition-rho}
\langle D_0\psi,D_0\ln b\rangle_{g_0}\le 2\varrho-\varrho_0,
\end{equation}
for some constant $\varrho_0>0$, then there exists a constant $\rho_0>0$ depending on $\varrho_0,b$ and $g$, such that for any $\rho\ge \rho_0$, $\widetilde\psi:=e^{\rho\psi}$ is a strictly convex function on $(\Omega,\tilde g)$.
\end{lemma}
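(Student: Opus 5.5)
The plan is to compute the Hessian of $\psi$ with respect to $\tilde g$ using the connection formula \eqref{relation-connection}, and then show that the exponential reparametrization $e^{\rho\psi}$ absorbs the possibly negative terms. Recall $\widetilde D^2\psi(X,X)=X(X\psi)-(\widetilde D_XX)\psi$. By \eqref{relation-connection} with $Y=X$,
\[
\widetilde D_XX=D_{0X}X-\frac{1}{b}(Xb)X+\frac{|X|_{g_0}^2}{2b}D_0b,
\]
and therefore
\[
\widetilde D^2\psi(X,X)=D_0^2\psi(X,X)+\frac{(Xb)(X\psi)}{b}-\frac{|X|^2_{g_0}}{2}\langle D_0\ln b,D_0\psi\rangle_{g_0}.
\]
By the strict convexity of $\psi$ and \eqref{condition-rho}, the first and last terms together are at least $\varrho|X|^2_{g_0}-\tfrac12(2\varrho-\varrho_0)|X|^2_{g_0}=\tfrac{\varrho_0}{2}|X|^2_{g_0}$. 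The remaining cross term $(X\ln b)(X\psi)$ has no sign. This is exactly why condition \eqref{condition-rho} alone is not enough and the exponential is needed.

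Next I compute the Hessian of $\widetilde\psi=e^{\rho\psi}$. For any connection,
\[
\widetilde D^2 e^{\rho\psi}(X,X)=\rho e^{\rho\psi}\big(\widetilde D^2\psi(X,X)+\rho (X\psi)^2\big).
\]
Writing $a=X\psi$ and $c=X\ln b$, the bracket is bounded below by
\[
\tfrac{\varrho_0}{2}|X|^2_{g_0}+ac+\rho a^2.
\]
By Young's inequality, $|ac|\le \rho a^2/2+c^2/(2\rho)$. Also $c^2\le \|D_0\ln b\|_{L^\infty}^2|X|^2_{g_0}$, which is finite because $b\in C^\infty(\overline\Omega)$ and $b\ge m_0>0$ on the compact set $\overline\Omega$. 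Hence the bracket is at least
\[
\Big(\tfrac{\varrho_0}{2}-\tfrac{1}{2\rho}\|D_0\ln b\|^2_{L^\infty}\Big)|X|^2_{g_0}+\tfrac{\rho}{2}a^2.
\]
Choose $\rho_0=2\|D_0\ln b\|^2_{L^\infty}/\varrho_0$ (or $\rho_0=1$ if this is zero). Then for every $\rho\ge\rho_0$ the coefficient of $|X|^2_{g_0}$ is at least $\varrho_0/4$.

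To finish, convert norms using $|X|^2_{\tilde g}=b^{-1}|X|^2_{g_0}$, so that $|X|^2_{g_0}\ge (\min_{\overline\Omega} b)\,|X|^2_{\tilde g}$. Since $\psi$ is bounded below on $\overline\Omega$ (it is positive), $e^{\rho\psi}\ge 1$. This gives
\[
\widetilde D^2\widetilde\psi(X,X)\ge \frac{\rho\,\varrho_0\, m_0}{4}\,|X|^2_{\tilde g}\quad\text{for all }X\in\Omega_x,\ x\in\Omega,
\]
so $\widetilde\psi$ is strictly convex on $(\Omega,\tilde g)$. The constant $\rho_0$ depends only on $\varrho_0$ and $b$ (through $\|D_0\ln b\|$ measured in $g_0$), as claimed.

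The main obstacle is the bookkeeping in the Hessian transformation: the factor $-\frac{1}{b}(Xb)X$, which combines the two middle terms of \eqref{relation-connection} when $Y=X$, together with the sign conventions. A mistake there would change the constant $2\varrho$ in \eqref{condition-rho}. I would double-check it with the one-dimensional Euclidean example $g_0=dx^2$, $\psi=x^2$.
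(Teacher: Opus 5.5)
Your proof is correct and follows the same route as the paper. Both derive $\widetilde D^2e^{\rho\psi}(X,X)=\rho e^{\rho\psi}\bigl[D_0^2\psi(X,X)+b^{-1}(Xb)(X\psi)-\tfrac12\langle D_0\psi,D_0\ln b\rangle_{g_0}|X|_{g_0}^2+\rho(X\psi)^2\bigr]$ from the conformal connection formula, use the hypothesis together with $D_0^2\psi\ge\varrho g_0$ to get a positive multiple of $|X|_{g_0}^2$, and absorb the indefinite cross term into $\rho(X\psi)^2$ (the paper by completing the square, you by Young's inequality, which is the same step); your explicit threshold $\rho_0=2\|D_0\ln b\|_{L^\infty}^2/\varrho_0$ and final bound $\widetilde D^2\widetilde\psi(X,X)\ge\tfrac14\rho\varrho_0 m_0|X|_{\tilde g}^2$ check out, and are if anything cleaner than the constant written in the paper.
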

\begin{proof}
By \eqref{relation-connection}, for any real-valued vector field $X$, we can compute 
\begin{equation}\label{conformal-LC-connection}
\begin{split}
&\widetilde D^2\widetilde \psi(X,X)=\widetilde D^2e^{\rho\psi}(X,X)=\tilde g(\widetilde D_X\widetilde De^{\rho\psi},X)\\
&=\frac{\rho e^{\rho\psi}}{b}[g_0(\widetilde D_X\widetilde D\psi,X)+\rho b(X\psi)^2]\\
&=\frac{\rho e^{\rho\psi}}{b}[(Xb)(X\psi)+bg_0(\widetilde D_XD_0\psi,X)]+\rho^2e^{\rho\psi}(X\psi)^2\\
&=\frac{\rho e^{\rho\psi}}{b}[(Xb)(X\psi)+bg_0(D_{0X}D_0\psi-(2b)^{-1}(Xb)D_0\psi\\
&\quad -(2b)^{-1}\langle D_0\psi,D_0b\rangle_{g_0}X +(2b)^{-1}(X\psi)D_0b,X)]+\rho^2e^{\rho\psi}(X\psi)^2\\
&=\rho e^{\rho\psi}D_0^2\psi(X,X)+\frac{\rho e^{\rho\psi}}{b}[(X\psi)(Xb)-1/2D_0\psi(b)|X|_{g_0}^2]+\rho^2e^{\rho\psi}(X\psi)^2\\
&\ge \frac{\rho e^{\rho\psi}}{2}[2\varrho-D_0\psi(\ln b)]|X|_{g_0}^2+\frac{\rho e^{\rho\psi}}{b}[\rho b(X\psi)^2+(X\psi)(Xb)].
\end{split}
\end{equation}
Observe that
\begin{equation}
\rho b(X\psi)^2+(X\psi)(Xb)=\frac 14[2(\rho b)^{\frac12}(X\psi)+(\rho  b)^{-\frac12}(Xb)]^2-\frac{1}{4\rho b}(Xb)^2.\nonumber
\end{equation}
Hence, by \eqref{conformal-LC-connection}, we have
\begin{equation}
\widetilde D^2\widetilde \psi(X,X)\ge \frac{be^{\rho\psi}}{2}\Bigl[(2\varrho-D_0\psi(\ln b))\rho-\frac{1}{2b^2}\max_{\overline\Omega}|D_0b|_{g_0}^2\Bigr]|X|_{\tilde g}^2.\nonumber
\end{equation}
Recall that $b\in\mathcal U_K(M_0)$ is smooth on $\overline\Omega$. If there is a constant $\varrho_0>0$ such that
\begin{equation}
D_0\psi(\ln b)=b^{-1}\langle D_0\psi,D_0b\rangle_{g_0}\le 2\varrho-\varrho_0,\quad {\rm in}\,\,\Omega,\nonumber
\end{equation}
then, by taking
\begin{equation}
\rho_0\vcentcolon=2[(\min_{\overline\Omega}b)^{-1}+\varrho_0^{-1}\max_{\overline\Omega}(b^{-3}|D_0b|_{g_0}^2)],\nonumber
\end{equation}
we can conclude that
\begin{equation}
\widetilde D^2\widetilde\psi (X,X)\ge \varrho_0|X|_{\widetilde g}^2,\quad \forall\rho\ge\rho_0,\nonumber
\end{equation}
as desired.  This completes the proof.  
\end{proof}
\begin{remark}
By \eqref{conformal-LC-connection}, we can actually obtain
\begin{equation}
\widetilde D^2\widetilde\psi=\rho e^{\rho\psi}D_0^2\psi+\frac{\rho e^{\rho\psi}}{b}[{\rm sym}(D_0\psi\otimes D_0b)-1/2D_0\psi(b)g_0]+\rho^2e^{\rho\psi}D_0\psi\otimes D_0\psi,\nonumber
\end{equation}
where ${\rm sym}(D_0\psi\otimes D_0b)=\frac12(D_0\psi\otimes D_0b+D_0b\otimes D_0\psi)$.
\end{remark}
\begin{remark}
For our case, $g_0=ds^2$, the Euclidean metric, and $g=b^{-1}ds^2$, $D_0\psi=\nabla\psi$, we can simply choose $\psi=\frac12|x-x_0|^2$ with $x_0\in\R^n\backslash{\overline\Omega}$ and $\varrho=1$. The condition \eqref{condition-rho} implies 
\begin{equation}\label{cond:b-x-x0}
\nabla\ln b\cdot(x-x_0)\le 2-\varrho_0,\quad {\rm in}\, \Omega.
\end{equation}
For this case, $e^{\rho|x-x_0|^2}$ is a strictly convex function on $(\Omega,g)$ provided that $\rho$ is sufficiently large such that $\rho\ge\rho_0=2[(\min_{\overline\Omega}b)^{-1}+\varrho_0^{-1}\max_{\overline\Omega}(b^{-1}|\nabla b|^2)]$. 
Particularly, if $b$ is a constant, then the condition \eqref{cond:b-x-x0} clearly holds. 
\end{remark}
\begin{remark}
Let us recall the widely used Herglotz condition $\partial_r\bigl(rb^{-\frac12}(r)\bigr)>0$ for $b$ implying that $r\partial_r\ln b(r)<2$. Here $r=|x|$, and $b=b(r)$ is assumed to be an isotropic radial function in $\Omega$. Let $x_0=0_{\mathbb R^n}$ be the origin of $\mathbb R^n$.  In inequality \eqref{cond:b-x-x0}, if $b=b(r)$, then it becomes $r\partial_r\ln b(r)\le 2-\varrho_0<2$. Hence, the condition \eqref{cond:b-x-x0} clearly implies the Herglotz condition. We also refer to e.g., \cite{greene1976c,paternain2019geodesic,yao2011modeling} for  some discussions of the (local or global) existence of strictly convex functions on Riemannian manifolds.
\end{remark}

\section{Well-posedness of the forward problem}\label{sec:well-posedness}
In this section, we study the well-posedness of the forward linear MGT equation (with lower-order perturbations) and the nonlinear problem \eqref{eq:linear-JMGT-Sec1-intro}.  Denote
$$ L_{\alpha,b,c,\lambda,q}:=\partial_t^{3}+\alpha\partial_t^{2}-b\Delta\partial_t-c^{2}\Delta+\lambda\partial_t+q.$$
We first consider the linear MGT equation 
\begin{equation}\label{eq:full}
\begin{cases}
L_{\alpha,b,c,\lambda,q}u=h & {\rm in}\,\, \Omega_T,\\
u=f & {\rm on}\,\, \Gamma_T,\\
u(0)=\partial_tu(0)=\partial_t^2u(0)=0 & {\rm in}\,\, \Omega.
\end{cases}
\end{equation}
On an interval \(I=(t_*,t_*+\tau)\) for some $t_*\ge 0$ and $\tau>0$, the same notation denotes the corresponding
translated spaces. By Sobolev multiplication, \(E^m(\Omega_T)\) is a
Banach algebra when \(m>n+1\).

\noindent {\bf Well-posedness of the linear equation.}
We introduce the compatibility conditions for the linear equation.
\begin{definition}[Compatibility conditions of order \(m+1\)]
\label{def:compat-high}
Let \(m\ge1\) be an integer, \(h\in E^m(\Omega_T)\), and
\(f\in\mathcal A^m(\Gamma_T)\). Define
\begin{equation}\label{eq:compat-high}
\begin{aligned}
U_0&=U_1=U_2=0,\\
U_k&=-\alpha U_{k-1}+b\Delta U_{k-2}+c^2\Delta U_{k-3}
-\lambda U_{k-2}-qU_{k-3}
+\partial_t^{k-3}h(0,\cdot),\\
&\hspace{73mm}3\le k\le m+2.
\end{aligned}
\end{equation}
We say that \((f,h)\) satisfies the compatibility conditions up to order
\(m+1\) if
\begin{equation}\label{eq:compat-traces}
\partial_t^kf(0,\cdot)=U_k|_\Gamma,\qquad 0\le k\le m+1.
\end{equation}
\end{definition}

For \(h=0\), all the jets in \eqref{eq:compat-high} vanish, so
\eqref{eq:compat-traces} is precisely \(f\in\mathcal A_0^m(\Gamma_T)\).
For a general source, \(\lambda\) and \(q\) occur in the recursion and no boundary vanishing conditions on either coefficient are needed.
We give the following estimate.
\begin{lemma}\label{lem:P0-source-energy}
Let \(m\ge1\) be an integer and
\[
P_0=\partial_t^3+\alpha\partial_t^2-b\Delta\partial_t-c^2\Delta,
\qquad
\alpha,b,c\in\mathcal U_{m+1}(M_0),\qquad b,c\ge m_0>0.
\]
Let \(0\le t_*<t_*+\tau\le T\), and set \(I=(t_*,t_*+\tau)\).
Suppose that \(G\in E^m(I\times\Omega)\), \(g\in\mathcal A^m(I\times\Gamma)\), and
\[
w_j\in H^{m+2-j}(\Omega),\qquad 0\le j\le2.
\]
Set \(W_j=w_j\) for \(0\le j\le2\), and define
\begin{equation}\label{eq:P0-restart-jets}
W_k=-\alpha W_{k-1}+b\Delta W_{k-2}+c^2\Delta W_{k-3}
+\partial_t^{k-3}G(t_*),\qquad 3\le k\le m+2.
\end{equation}
If
\begin{equation}\label{eq:P0-restart-compatibility}
\partial_t^kg(t_*)=W_k|_\Gamma,\qquad 0\le k\le m+1,
\end{equation}
then
\[
P_0w=G,\qquad w|_{I\times\Gamma}=g,\qquad
\partial_t^jw(t_*)=w_j,\quad 0\le j\le2,
\]
has a unique solution \(w\in E^{m+2}(I\times\Omega)\), with
\(\partial_\nu w\in\mathcal N_{m+1}(I\times\Gamma)\), and
\begin{equation}\label{eq:P0-reference-estimate}
\begin{aligned}
&\|w\|_{E^{m+2}(I\times\Omega)}
+\|\partial_\nu w\|_{\mathcal N_{m+1}(I\times\Gamma)}\\
&\le C\Bigl(\sum_{j=0}^2\|w_j\|_{H^{m+2-j}(\Omega)}+\|G\|_{E^m(I\times\Omega)}+\|g\|_{H^{m+2}(I\times\Gamma)}\Bigr).
\end{aligned}
\end{equation}
The constant is uniform for the stated coefficient class.
\end{lemma}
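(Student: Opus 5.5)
We plan to reduce the third-order problem to a parabolic problem coupled with an ODE in time. The key is the standard MGT substitution. Set $z=\partial_t w+\frac{c^2}{b}w$. A direct computation gives
\begin{equation}
P_0w=\partial_t^2z+\Upsilon\,\partial_t z-b\Delta z+\Bigl(\text{terms of order}\le 2\text{ in } w\text{ with coefficients built from }\alpha,b,c\Bigr).
\end{equation}
Here $\Upsilon=\alpha-c^2/b$. The precise identity is
\begin{equation}
P_0w=\partial_t^2 z+\Upsilon\partial_t z-b\Delta z+b[\Delta,c^2/b]w-\Upsilon\frac{c^2}{b}\partial_t w .
\end{equation}
It is more convenient, however, to keep the third-order structure and work with the system for $(w,\partial_t w,\partial_t^2 w)$.

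\textbf{Strong parabolicity.} Since $b\ge m_0>0$, the principal part $\partial_t(\partial_t^2-b\Delta)$ is strongly damped. Equivalently, the operator
\begin{equation}
\mathcal A=\begin{pmatrix}0&1&0\\0&0&1\\ c^2\Delta & b\Delta & -\alpha\end{pmatrix}
\end{equation}
generates a $C_0$-group on $H^1_0\times H^1_0\times L^2$. This is the classical well-posedness of MGT for $b>0$ (Kaltenbacher--Lasiecka--Marchand), proved via the change of variables $z=\partial_tw+\frac{c^2}{b}w$, which turns the system into a wave-type equation for $z$ coupled to an ODE $\partial_t w=z-\frac{c^2}{b}w$.

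\textbf{Step 1: homogeneous boundary data.} We first lift the boundary data. Choose an extension $\tilde g\in E^{m+2}(I\times\Omega)$ of $g$ with
\begin{equation}
\|\tilde g\|_{E^{m+2}}\lesssim\|g\|_{\mathcal A^m}\lesssim \|g\|_{H^{m+2}(I\times\Gamma)}+\text{sup-in-time norms},
\end{equation}
using the trace space structure built into $\mathcal A^m$. Then $v=w-\tilde g$ solves the problem with zero Dirichlet data, source $G-P_0\tilde g\in E^{m-1}$, and shifted initial jets. One regularity order is lost here. To avoid this, I would instead pick $\tilde g$ as the solution of an auxiliary strongly damped problem, or use the hidden-regularity/trace lifting of Lasiecka--Triggiani type.

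\textbf{Step 2: base energy estimate.} For the homogeneous problem we prove the basic energy estimate. Multiply $P_0v=F$ by $\partial_t z$ with $z=\partial_t v+\frac{c^2}{b}v$, integrate by parts (the boundary terms vanish), and close with Gronwall. This controls
\begin{equation}
\|\partial_t^2v\|_{L^2}+\|\nabla\partial_t v\|_{L^2}+\|\nabla v\|_{L^2}.
\end{equation}
The dependence of the constant on $\alpha,b,c$ enters only through $\|\cdot\|_{C^1}$ and $m_0$, so it is uniform over the class.

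\textbf{Step 3: higher regularity.} We differentiate the equation in time $k\le m$ times. The compatibility conditions \eqref{eq:P0-restart-compatibility} guarantee that $\partial_t^k v$ has initial data $W_k-\partial_t^k\tilde g(t_*)$ lying in the correct spaces with zero boundary trace. Repeating Step 2 gives
\begin{equation}
\partial_t^k v\in C(I;H^1)\cap C^1(I;H^1)\cap C^2(I;L^2).
\end{equation}
Spatial regularity is then recovered by elliptic regularity. Rewrite the equation as
\begin{equation}
-\Delta\bigl(b\partial_t v+c^2v\bigr)=F-\partial_t^3v-\alpha\partial_t^2v.
\end{equation}
Viewing this as an elliptic equation for $b\partial_tv+c^2v$, we solve the first-order ODE in $t$ for $v$ and trade time derivatives for space derivatives inductively. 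This is where the $C^{m+1}$ bound on the coefficients is used.

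\textbf{Step 4: normal derivative.} The bound on $\partial_\nu w$ in $\mathcal N_{m+1}$ follows from the trace theorem applied to $\partial_t^k w\in C(I;H^{m+2-k})$ for $k\ge1$. Since $\mathcal N_{m+1}$ only involves at least one time derivative, this gives $H^{m+1-k}(\Gamma)$ control uniformly in time. Uniqueness follows from the energy estimate with zero data.

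\textbf{Main obstacle.} I expect the main difficulty to be Step 1 together with the elliptic bootstrap in Step 3. Specifically, one must obtain exactly the $H^{m+2}(I\times\Gamma)$ boundary norm on the right side without losing derivatives. The first approach I would try is the time-derivative-first strategy: differentiate $m+1$ times in time, and handle the lift at the level of the $z$-equation, where the problem is a damped wave equation with the known sharp Dirichlet theory.
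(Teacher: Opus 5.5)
The paper gives no argument of its own here; it quotes \cite[Lemma~2.2]{qiu2026inverse} after translating $t_*$ to $0$. Your reduction via $z=\partial_tw+\frac{c^2}{b}w$ is sound (the commutator identity is correct), and so are the energy estimate with multiplier $\partial_tz$ and the time differentiation. These handle homogeneous Dirichlet data. The assertions that make the lemma nontrivial are left unproved, though, and your ``main obstacle'' remark locates one of them correctly.

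\textbf{Inhomogeneous boundary data.} Lifting and subtracting cannot give the stated result. A lift $\tilde g$ of $g\in H^{m+2}(I\times\Gamma)$ by a right inverse of the trace lies in $H^{m+5/2}(I\times\Omega)$. Hence $P_0\tilde g$ is only in $H^{m-1/2}(I\times\Omega)$, which is short of what the energy method needs to return $w\in E^{m+2}(I\times\Omega)$. Nothing absorbs this loss: despite your heading, the MGT operator with $b>0$ is not parabolic. As you yourself say, it generates a group, so there is no smoothing. What is needed is the sharp Dirichlet theory of Lasiecka--Lions--Triggiani (duality plus multipliers). Apply it to the wave equation $\partial_t^2z+\Upsilon\partial_tz-b\Delta z=G-b[\Delta,\tfrac{c^2}{b}]w+\Upsilon\tfrac{c^2}{b}\partial_tw$ with Dirichlet data $\partial_tg+\frac{c^2}{b}g\in H^{m+1}(I\times\Gamma)$. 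The first-order coupling in $w$ must then be closed by a fixed point through the ODE $\partial_tw+\frac{c^2}{b}w=z$. Your closing remark points in this direction, but it is not carried out.

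\textbf{Normal derivative (Step~4).} Step~4 is wrong. From $\partial_t^kw\in C(\bar I;H^{m+2-k}(\Omega))$ the trace theorem gives only $\partial_\nu\partial_t^kw\in C(\bar I;H^{m+\frac12-k}(\Gamma))$. That is half a derivative short of the $L^2(I;H^{m+1-k}(\Gamma))$ control required in $\mathcal N_{m+1}(I\times\Gamma)$. For $k=m+1$, where $\partial_t^{m+1}w\in C(\bar I;H^1(\Omega))$, there is no normal trace at all. The $\mathcal N_{m+1}$ bound is therefore a hidden-regularity statement. The multiplier $h\cdot\nabla z$, with $h$ a smooth extension of $\nu$, gives $\partial_\nu z\in H^m(I\times\Gamma)$. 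The boundary ODE $\partial_t\partial_\nu w+\frac{c^2}{b}\partial_\nu w=\partial_\nu z-\partial_\nu(\frac{c^2}{b})\,g$ then produces exactly $\mathcal N_{m+1}$, that is, one time derivative in $H^m(I\times\Gamma)$. This is why the definition of $\mathcal N_{m+1}$ starts at $k=1$.

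\textbf{Top spatial regularity (Step~3).} Your elliptic equation for $b\partial_tw+c^2w$ also silently drops the lower-order commutators of $\Delta$ with the variable coefficients. More importantly, inverting the ODE only returns $w\in C(\bar I;H^{m+1}(\Omega))$. The top component $w\in C(\bar I;H^{m+2}(\Omega))$ of $E^{m+2}$ needs one more step. Integrate the equation in time, so that $\partial_t^3w$ becomes $\partial_t^2w$. Then solve the resulting Volterra relation $b\Delta w(t)+\int_{t_*}^t c^2\Delta w\,ds\in C(\bar I;H^m(\Omega))$ for $\Delta w$.
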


\begin{proof}
This is precisely \cite[Lemma 2.2]{qiu2026inverse}, after translating the left
endpoint of the time interval to zero.  
\end{proof}

\begin{proposition}[Well-posedness of the linear problem \eqref{eq:full}]
\label{prop:wellposed}
Let \(m\ge 0\) be an integer. Assume
\[
\alpha,b,c,\lambda,q\in\mathcal U_{m+1}(M_0),\, h\in E^m(\Omega_T),\, f\in\mathcal A^m(\Gamma_T)
\qquad b,c\ge m_0>0.
\]
and assume that the compatibility conditions
\eqref{eq:compat-high}--\eqref{eq:compat-traces} hold. Then the linear problem \eqref{eq:full} admits a unique solution
\[
u\in E^{m+2}(\Omega_T),\qquad
\partial_\nu u\in\mathcal N_{m+1}(\Gamma_T),
\]
satisfying 
\begin{equation}\label{eq:est-high}
\|u\|_{E^{m+2}(\Omega_T)}
+\|\partial_\nu u\|_{\mathcal N_{m+1}(\Gamma_T)}
\le C\left(
\|h\|_{E^m(\Omega_T)}
+\|f\|_{H^{m+2}(\Gamma_T)}
\right),
\end{equation}
where the positive constant $C$ depends only on \(\Omega,T,m,m_0\), and \(M_0\). 
\end{proposition}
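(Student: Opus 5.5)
We treat the lower-order terms $\lambda\partial_t u+qu$ as a perturbation of the reference operator $P_0$ from Lemma~\ref{lem:P0-source-energy}. The plan has four steps: a fixed point on a short time interval, a restart argument that chains short intervals together, a Gronwall-type bound, and a separate treatment of $m=0$.

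\emph{Step 1: fixed point on a short interval.} Given $v\in E^{m+2}(I\times\Omega)$, we solve
\[
P_0w=h-\lambda\partial_tv-qv,\qquad w|_{I\times\Gamma}=f,
\]
with the prescribed initial jets, and set $\Phi(v)=w$.

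Since $\lambda,q\in\mathcal U_{m+1}(M_0)$, the source $G=h-\lambda\partial_tv-qv$ lies in $E^m$. Indeed, $\partial_tv\in E^{m+1}\subset E^m$, and multiplication by a $C^{m+1}$ coefficient preserves $E^m$. The bound is
\[
\|\lambda\partial_t v+qv\|_{E^m(I\times\Omega)}\le CM_0\|v\|_{E^{m+1}(I\times\Omega)}.
\]

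We must also check that the compatibility hypothesis \eqref{eq:P0-restart-compatibility} of Lemma~\ref{lem:P0-source-energy} holds for this $G$. The jets $W_k$ built from $G$ absorb $-\lambda\partial_t^{k-2}v(t_*)-q\partial_t^{k-3}v(t_*)$. To make them coincide with the jets $U_k$ of \eqref{eq:compat-high}, we work in the closed affine subset of $E^{m+2}$ consisting of functions whose time jets at $t_*$ (up to order $m+1$) equal the prescribed $U_k$. This set is nonempty: a lifting lemma gives a function with prescribed jets $U_k\in H^{m+2-k}(\Omega)$. The map $\Phi$ preserves this set, because $P_0w=G$ forces $\partial_t^kw(t_*)=W_k$, and on this set $W_k=U_k$. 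Then \eqref{eq:compat-traces} yields \eqref{eq:P0-restart-compatibility}.

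\emph{Step 2: contraction and restart.} For two elements $v_1,v_2$ of the affine set, the difference $\Phi(v_1)-\Phi(v_2)$ has zero data and zero initial jets. Hence \eqref{eq:P0-reference-estimate} combined with $\|z\|_{E^{m+1}(I)}\le\tau\|z\|_{E^{m+2}(I)}$ gives
\[
\|\Phi(v_1)-\Phi(v_2)\|_{E^{m+2}}\le C_0M_0\tau\|v_1-v_2\|_{E^{m+2}}.
\]
The last inequality uses that $z$ has vanishing initial jets, so that $\partial_t^kz(t)=\int_{t_*}^t\partial_t^{k+1}z$.

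Choosing $\tau$ with $C_0M_0\tau\le 1/2$, where $\tau$ depends only on $m,m_0,M_0,\Omega$, makes $\Phi$ a contraction. This gives a unique solution on $(0,\tau)$.

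We then restart at $t_*=\tau$ with initial data $w_j=\partial_t^ju(\tau)\in H^{m+2-j}$. The new compatibility conditions hold automatically: the solution itself supplies jets that match $\partial_t^kf(\tau)$ on $\Gamma$, since $u\in E^{m+2}$ has the correct traces. After $\lceil T/\tau\rceil$ steps we obtain a global solution. Uniqueness on each piece gives global uniqueness.

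\emph{Step 3: the estimate.} On each interval, \eqref{eq:P0-reference-estimate} applied to the fixed point gives
\[
\|u\|_{E^{m+2}(I)}\le C\bigl(\textstyle\sum_j\|\partial_t^ju(t_*)\|_{H^{m+2-j}}+\|h\|_{E^m}+\|f\|_{H^{m+2}}\bigr)+\tfrac12\|u\|_{E^{m+2}(I)}.
\]
The last term is absorbed into the left side, where we split $u$ as the solution with zero lower-order terms plus a remainder.

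The first term on the right is bounded by the $E^{m+2}$ norm on the previous interval. Iterating over finitely many intervals gives \eqref{eq:est-high}, with $C$ growing geometrically in $T/\tau$. The bound on $\partial_\nu u$ in $\mathcal N_{m+1}$ follows by summing the per-interval bounds from the lemma.

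\emph{Step 4: the case $m=0$ and the main obstacle.} Lemma~\ref{lem:P0-source-energy} is stated for $m\ge1$, so $m=0$ needs separate handling. We would either cite the corresponding $m=0$ estimate from \cite{qiu2026inverse} or approximate the data by smoother compatible data and pass to the limit in the energy estimate.

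The main obstacle is keeping the jet bookkeeping at the restart times consistent, so that the compatibility hypothesis holds at every $t_*$. The constants must also stay uniform. The contraction step itself is routine.
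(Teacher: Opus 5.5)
Your proposal is correct and follows essentially the same route as the paper: a contraction for $P_0w=h-(\lambda\partial_t+q)v$ on the closed set of functions with the prescribed jets, smallness from the vanishing jets of differences, restarts, and a finite induction. You are in fact more careful than the paper about the nonemptiness of that set and about the case $m=0$, where the paper simply invokes Lemma~\ref{lem:P0-source-energy} even though it is stated only for $m\ge1$.

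Two minor points. In Step 3, justify the absorption by
$\|(\lambda\partial_t+q)u\|_{E^m(I\times\Omega)}\lesssim\sum_{k\le m+1}\|\partial_t^ku(t_*)\|_{H^{m+1-k}(\Omega)}+\tau\|u\|_{E^{m+2}(I\times\Omega)}$,
with the jets controlled by the recursion. Do not split off a $P_0$-solution with boundary data $f$, since that solution need not satisfy the $P_0$-compatibility conditions. Second, the paper obtains the $\partial_\nu u$ bound more cleanly than your summation over subintervals, by applying the lemma once on $(0,T)$ with $G=h-(\lambda\partial_t+q)u$.
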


\begin{proof}
Let $B=\lambda\partial_t+q.$ 
We first consider the time interval \(I=(t_*,t_*+\tau)\). Fix the boundary and initial data along with all higher-order derivatives up to order $m+1$ that are determined recursively by the equation. Let \(\mathcal X_I\subset E^{m+2}(I\times\Omega)\) denote the closed set of functions satisfying the prescribed boundary data and the initial compatibility conditions at the left endpoint of \(I\). The higher-order initial time derivatives up to order (m+1) are determined recursively by the equation.
For \(v\in\mathcal X_I\), let \(\Phi(v)=w\) be the solution, guaranteed by
Lemma~\ref{lem:P0-source-energy}, of the linear equation
\begin{equation}\label{eq:linear-fixed-point-map}
P_0w=h-Bv,\qquad w|_{I\times\Gamma}=f,
\end{equation}
with the prescribed initial values at \(t=t_*\). Since
the coefficients are independent of time,
\[
\partial_t^{k-3}(Bv)(t_*)
=\lambda\,\partial_t^{k-2}v(t_*)
+q\,\partial_t^{k-3}v(t_*),\qquad 3\le k\le m+2.
\]
Consequently, the recursion \eqref{eq:P0-restart-jets} for \(w\) is
exactly the full recursion \eqref{eq:compat-high} (or its translated
version at a restart time). Thus \(\Phi(\mathcal X_I)\subset\mathcal X_I\).

If \(v_1,v_2\in\mathcal X_I\) and \(z=v_1-v_2\), then
\(\partial_t^kz(t_*)=0\) for \(0\le k\le m+1\). For \(0\le j\le m\),
the fundamental theorem of calculus and the multiplier estimate give
\[
\sup_{t\in I}\|\partial_t^j(Bz)(t)\|_{H^{m-j}(\Omega)}
\le CM_0\tau\|z\|_{E^{m+2}(I\times\Omega)}.
\]
Hence
\begin{equation}\label{eq:B-short-estimate}
\|Bz\|_{E^m(I\times\Omega)}
\le CM_0\tau\|z\|_{E^{m+2}(I\times\Omega)}.
\end{equation}
Applying \eqref{eq:P0-reference-estimate} yields
\[
\|\Phi(v_1)-\Phi(v_2)\|_{E^{m+2}(I\times\Omega)}
\le CM_0\tau\|v_1-v_2\|_{E^{m+2}(I\times\Omega)}.
\]
Choosing a uniform \(\tau_0>0\) so that \(CM_0\tau_0<1\), Banach's
fixed-point theorem gives a unique solution on every interval of length
at most \(\tau_0\).

At the right endpoint of each time interval, the solution already
constructed on the preceding interval, together with its time
derivatives up to the required order, is taken as the initial data for
the next interval. The same contraction argument applies, since the
difference of two successive iterates has vanishing initial traces up
to order $m+1$. Repeating this procedure a finite number of times
yields a solution on the whole interval $[0,T]$. Moreover, the standard \emph{a priori} estimate for the fixed point, combined
with the bounded lifting associated with the compatible initial and
boundary data, gives, on each such interval $I$, the estimate
\[
\|u\|_{E^{m+2}(I\times\Omega)} \le C\Bigl(\sum_{j=0}^2 \|\partial_t^j u(t_*)\|_{H^{m+2-j}(\Omega)}+\|h\|_{E^m(I\times\Omega)}+\|f\|_{H^{m+2}(I\times\Gamma)}\Bigr).
\]
A finite induction over the subdivision of \([0,T]\) yields
\[
\|u\|_{E^{m+2}(\Omega_T)}\le C\left(\|h\|_{E^m(\Omega_T)}+\|f\|_{H^{m+2}(\Gamma_T)}\right).
\]
This construction also proves uniqueness of solutions.

Finally \(G=h-Bu\) belongs to \(E^m(\Omega_T)\), and
\[
\|G\|_{E^m(\Omega_T)}
\le C\bigl(\|h\|_{E^m(\Omega_T)}
+M_0\|u\|_{E^{m+2}(\Omega_T)}\bigr).
\]
Moreover, its the compatibility recursion ( related to $P_0$) is precisely \eqref{eq:compat-high}. Applying Lemma~\ref{lem:P0-source-energy} on \((0,T)\) gives \eqref{eq:est-high}. 
\end{proof}

\begin{remark} Denote by
\begin{equation}\label{eq:adjoint-L-abclamq}
L_{\alpha,b,c,\lambda,q}^*v\vcentcolon=-\partial_t^3 v + \alpha(x)\partial_t^2 v 
+ \partial_t \Delta \bigl(b(x)v\bigr) 
- \Delta \bigl(c(x)^2 v\bigr) 
- \lambda(x)\partial_t v + q(x)v
\end{equation}
the adjoint operator of $L_{\alpha,b,c,\lambda,q}$ with respect to $L^2$.
Under the assumptions of Proposition~\ref{prop:wellposed}, assume
that \(b,c\in\mathcal U_{m+2}(M_0)\). Let
\(G\in E^m(\Omega_T)\) and \(g\in\mathcal A^m(\Gamma_T)\) satisfy the
terminal compatibility conditions. Then the adjoint-backward problem
\[
\begin{cases}
L_{\alpha,b,c,\lambda,q}^*y=G&\text{in }\Omega_T,\\
y=g&\text{on }\Gamma_T,\\
y(T)=\partial_ty(T)=\partial_t^2y(T)=0&\text{in }\Omega
\end{cases}
\]
has a unique solution \(y\in E^{m+2}(\Omega_T)\), with
\(\partial_\nu y\in\mathcal N_{m+1}(\Gamma_T)\), and
\[
\|y\|_{E^{m+2}(\Omega_T)}
+\|\partial_\nu y\|_{\mathcal N_{m+1}(\Gamma_T)}
\le C\left(
\|G\|_{E^m(\Omega_T)}
+\|g\|_{H^{m+2}(\Gamma_T)}
\right).
\]
\end{remark}

\begin{remark}
We note that the regularity conditions on the coefficients $\alpha,b,c,\lambda,q$ are not sharp. Such conditions are only for the convenience of discussions.
\end{remark}

\noindent {\bf Local well‑posedness of the nonlinear JMGT equation.}
\label{sec:Westervelt}
We now consider the following nonlinear equation with zero initial conditions
\begin{equation}\label{eq:Westervelt}
\begin{cases}
L_{\alpha,b,c,\lambda,q}u=\partial_t^2(\xi u^2)
&\text{in }\Omega_T,\\
u=f&\text{on }\Gamma_T,\\
u(0)=\partial_tu(0)=\partial_t^2u(0)=0&\text{in }\Omega.
\end{cases}
\end{equation}

\begin{proposition}[local well-posedness of \eqref{eq:Westervelt}]
\label{prop:Westervelt}
Let \(m>n+1\) be an integer, and assume that
\[
\alpha,b,c,\lambda,q,\xi\in\mathcal U_{m+1}(M_0),\quad f\in\mathcal A_0^m(\Gamma_T),\,\,
\quad b,c\ge m_0>0.
\]
There are constants
\(\delta_{\rm wp},r_0>0\), depending only on $\Omega,T,M_0,m_0$ such that if
\[
\|f\|_{H^{m+2}(\Gamma_T)}\le\delta_{\rm wp},
\]
then \eqref{eq:Westervelt} has a unique solution in the ball
\(\{u\in E^{m+2}(\Omega_T):\|u\|_{E^{m+2}}\le r_0\}\). Moreover, $\partial_\nu u\in\mathcal N_{m+1}(\Gamma_T)$ 
and
\begin{equation}\label{eq:Westervelt-est}
\|u\|_{E^{m+2}(\Omega_T)}
+\|\partial_\nu u\|_{\mathcal N_{m+1}(\Gamma_T)}
\le C\|f\|_{H^{m+2}(\Gamma_T)}.
\end{equation}
\end{proposition}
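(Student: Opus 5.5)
We prove Proposition~\ref{prop:Westervelt} by a contraction argument built on the linear theory of Proposition~\ref{prop:wellposed}. Let $u_f\in E^{m+2}(\Omega_T)$ be the solution of \eqref{eq:full} with $h=0$ and boundary data $f$. Since $f\in\mathcal A_0^m(\Gamma_T)$, the compatibility conditions hold with all jets zero. Estimate \eqref{eq:est-high} then gives $\|u_f\|_{E^{m+2}}\le C\|f\|_{H^{m+2}(\Gamma_T)}$.

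We seek $u=u_f+w$, where $w$ solves
\[
L_{\alpha,b,c,\lambda,q}w=\partial_t^2\bigl(\xi(u_f+w)^2\bigr),\qquad w|_{\Gamma_T}=0,\qquad \partial_t^jw(0)=0\ (j\le2).
\]
For $v$ in the closed set
\[
X_r=\{v\in E^{m+2}(\Omega_T):\|v\|_{E^{m+2}}\le r,\ \partial_t^kv(0)=0\ \text{for } 0\le k\le m+1\},
\]
define $\Phi(v)$ as the solution of the linear problem with source $h_v=\partial_t^2(\xi(u_f+v)^2)$ and zero boundary data.

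The first step is to show $h_v\in E^m(\Omega_T)$. Because $m>n+1$, $E^{m}$ and $E^{m+2}$ are algebras (the paper notes this for $E^m$), and multiplication by $\xi\in\mathcal U_{m+1}(M_0)$ is bounded on them. Hence $\xi(u_f+v)^2\in E^{m+2}$, and two time derivatives land in $E^m$ with
\[
\|h_v\|_{E^m}\le C\|u_f+v\|_{E^{m+2}}^2 .
\]
The second step concerns compatibility. We need the data $(0,h_v)$ to satisfy \eqref{eq:compat-traces}. Since $u_f$ and $v$ have vanishing initial time traces up to order $m+1$, every $\partial_t^{k}h_v(0)$ with $k\le m-1$ is a sum of products of traces of $u_f+v$ of orders at most $m+1$, all of which vanish. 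So every jet $U_k$ in \eqref{eq:compat-high} is zero, matching the zero boundary data. The solution $\Phi(v)$ then again has vanishing traces up to order $m+1$, since these traces are exactly the $U_k$. Hence $\Phi$ maps into the right class.

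The third step is the self-map and contraction estimates. From \eqref{eq:est-high},
\[
\|\Phi(v)\|_{E^{m+2}}\le C\bigl(C\delta_{\rm wp}+r\bigr)^2 ,
\]
and
\[
\|\Phi(v_1)-\Phi(v_2)\|\le C\|\xi(2u_f+v_1+v_2)(v_1-v_2)\|_{E^{m+2}}\le C(\delta_{\rm wp}+r)\|v_1-v_2\|.
\]
Choosing $r$ and then $\delta_{\rm wp}$ small makes $\Phi$ a contraction on $X_r$. Banach's theorem gives a fixed point $w$, and the resulting bound $\|w\|\le C\|f\|^2$ yields $\|u\|_{E^{m+2}}\le C\|f\|$. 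The Neumann bound $\partial_\nu u\in\mathcal N_{m+1}$ then follows from \eqref{eq:est-high} applied to $u_f$ and to $w$ with source $h_w$.

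Uniqueness in the ball $\|u\|_{E^{m+2}}\le r_0$ follows by subtracting two solutions $u_1,u_2$. The difference $z=u_1-u_2$ satisfies a linear problem with source $\partial_t^2(\xi(u_1+u_2)z)$, zero data, and vanishing traces. Estimate \eqref{eq:est-high} gives $\|z\|\le C r_0\|z\|$, so $z=0$ once $r_0$ is small.

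The main obstacle I expect is the compatibility bookkeeping: $\partial_t^2$ of the source consumes two orders, and one must check that $E^{m+2}$ regularity of $u$ gives exactly $h\in E^m$. One must also make sure the initial-trace vanishing is preserved, without which Proposition~\ref{prop:wellposed} cannot be applied. All constants must be kept uniform in the coefficient class.
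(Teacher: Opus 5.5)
Your proposal is correct and matches the paper's approach. The paper only states that the result follows from the Banach fixed-point theorem applied on top of the linear estimate \eqref{eq:est-high}, deferring details to \cite[Proposition 1.1]{qiu2026inverse}; your decomposition $u=u_f+w$, the vanishing-jet class $X_r$ used for compatibility, and the self-map, contraction and uniqueness estimates supply exactly those details.

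Two small refinements. First, since $\xi$ is time-independent, write $h_v=\xi\,\partial_t^2\bigl((u_f+v)^2\bigr)$ and estimate directly in $E^m$. This needs only $\|\xi\|_{C^m}\le M_0$; boundedness of multiplication by $\xi$ on $E^{m+2}$, as you assert it, would need $m+2$ derivatives of $\xi$, which $\mathcal U_{m+1}(M_0)$ does not control. Second, choose $C\delta_{\rm wp}\le r_0$ explicitly, so that the constructed solution lies in the uniqueness ball.
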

The above proposition can be proved by the Banach fixed point theorem, see for example \cite[Proposition 1.1]{qiu2026inverse} for more discussions. Therefore, we omit the details.



\section{Construction of Gaussian beam solutions}\label{sec:Gaussian-beams}
In this section, we will construct solutions to the linear equation
\begin{equation}
\begin{cases}
L_{\alpha,b,c,\lambda,q}v=0 & {\rm in}\,\, \Omega_T,\\
v=f & {\rm on}\,\, \Gamma_T,\\
v(0)=\partial_tv(0)=\partial_t^2v(0)=0  & {\rm in}\,\, \Omega,
\end{cases}
\end{equation}
of the form 
\begin{equation}
v(x,t)=e^{i\sigma\varphi(x,t)}a_{\sigma}(x,t)+r_\sigma(x,t),
\end{equation}
near a null geodesic, where the functions $\varphi$ and $a_\sigma$ respectively refer to the phase and amplitude function that will be constructed later, $r_\sigma$ is the associated remainder term. We note that the principal term $v_\sigma\vcentcolon=e^{i\sigma\varphi(x,t)}a_{\sigma}(x,t)$ is called the Gaussian beam concentrating on a null geodesic.

Recalling the Riemannian metric $g=b^{-1}ds^2$ and the related geometric setting introduced in Section \ref{subsection-1.2}, we can rewrite
\begin{equation}
\begin{split}
L_{\alpha,b,c,\lambda,q}&=\partial_t^3+\alpha\partial_t^2-\Delta_g\partial_t-\beta\Delta_g+\lambda\partial_t\\
&\quad +\frac{2-n}{2}\langle D\ln b,D\partial_t\cdot\rangle_g+\frac{(2-n)\beta}{2}\langle D\ln b, D\cdot\rangle_g+q,
\end{split}
\end{equation}
and denote 
\begin{equation}
P_{\alpha,\beta,\lambda}:=\partial_t^3+\alpha\partial_t^2-\Delta_g\partial_t-\beta\Delta_g+\lambda\partial_t.
\end{equation}
Clearly, if $n=2$, then $L_{\alpha,b,c,\lambda,q}-q=P_{\alpha,\beta,\lambda}$. 

For the construction of Gaussian beams, we consider the equation $L_{\alpha,b,c,\lambda,q}v=0$ in an extended domain $\mathcal M\vcentcolon=[0,T]\times \widetilde\Omega$, where $\Omega\subset\subset\widetilde\Omega$ is the smooth extension of $\Omega$. Accordingly, by extending the function $b$ smoothly ensuring $b>0$ in $\widetilde\Omega$, we can obtain a new metric (still denoted by $g$) in $\widetilde\Omega$. We also extend smoothly the coefficients $\alpha,\lambda,q$ and $\xi$ to $\widetilde\Omega$ in order to construct Gaussian beams in $\mathcal M$.
Now we consider $(\mathcal M,\bar g)$ as a Lorentzian manifold with the metric $\bar g=-dt^2+ g$. Let $\vartheta(t)=(t,\gamma(t))$ be a null geodesic on $(\mathcal M,\bar g)$, where $\gamma(t)$ is a unit-speed geodesic on the Riemannian manifold $(\widetilde\Omega,g)$. For the purpose of constructing Gaussian beams near the null geodesic $\vartheta$, we next introduce the Fermi coordinates, which are centered around $\vartheta$ where the metric $\overline g$ has a simple form along $\vartheta$. 

We note that the authors of the present paper (D. Qiu and T. Zhou), together with Y. Ye and X. Xu, have constructed Gaussian beams for the linear MGT equation without considering the lower‑order term $\lambda\partial_tu+qu$ in their recent work \cite{qiu2026inverse}. Here, for the sake of completeness and the readers' convenience, we present the detailed construction of such Gaussian beam solutions for the linear equation $L_{\alpha,b,c,\lambda,q}v=0$.


\subsection{Fermi coordinates}
We assume that $\vartheta(t)$ passes through a point $(t_0,x_0),$ where $t_0\in(0,T)$ and $\gamma(t_0)=x_0\in \Om$, and $\vartheta$ joins two points $(t_-,\gamma(t_-))$ and $(t_+,\gamma(t_+))$ with $t_{\pm}\in (0,T)$ and $\gamma(t_{\pm})\in \Gamma.$
Recalling that $T>{\rm diam}_g\Omega$, we extend $\vartheta$ to $\mathcal M$ such that $\gamma(t)$ is well defined on $[t_--\epsilon,t_++\epsilon]\subset (0,T)$
for some sufficiently small constant $\epsilon>0$.

We conclude the following lemma, which can be found in \cite[Lemma 1]{feizmohammadi2022recovery} (see also \cite[Lemma 3.1]{feizmohammadi2021recovery}). 
\begin{lemma}[Fermi coordinates]
\label{lem:Fermi-coord}
Let $\vartheta:(t_--\epsilon,t_++\epsilon)\rightarrow \mathcal M$ be a null geodesic on $(\mathcal M,\bar g)$. Then, for some $0<2\epsilon'<\epsilon/\sqrt2$, there exists a coordinate neighborhood $(U,\Phi)$ of $\vartheta([t_--\epsilon'/\sqrt2,t_++\epsilon'/\sqrt2]),$ with the coordinates denoted by $(z_0=s,z'=(z_1,\cdots,z_n))$ such that
$$\begin{gathered}
V=\Phi(U)=\bigl(\sqrt2(t_--t_0)-2\epsilon',\sqrt2(t_+-t_0)+2\epsilon'\bigr)\times B(0,\delta),\\
\Phi(\vartheta(t))=(\sqrt2(t-t_0),0,\cdots,0),\qquad t=t_0+\frac{s-z_1}{\sqrt2}.
\end{gathered}$$
where $B(0,\delta)$ denotes a ball in $\R^n$ with a sufficiently small radius $\delta.$
Moreover, in this coordinate system, the metric $\bar g$ can be written as
\begin{equation}
\bar g(s,z')|_\vartheta=2dsdz_1+\sum_{k=2}^ndz_k^2,\quad \frac{\partial \bar g_{jk}}{\partial z_i}\Big|_{\vartheta}=0,\quad i,j,k=0,1,\cdots,n.
\end{equation}
\end{lemma}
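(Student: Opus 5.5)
The plan is to build the coordinates in three stages on the Lorentzian product $(\mathcal M,\bar g)$ with $\bar g=-dt^2+g$: first choose a parallel frame along $\vartheta$, then take the exponential map in the transversal directions, and finally make a linear change to null coordinates. We start from the unit-speed geodesic $\gamma$ of $(\widetilde\Omega,g)$ with $\gamma(t_0)=x_0$, so that $\dot\vartheta=\partial_t+\dot\gamma$ is null.

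\textbf{Step 1 (frame).} Complete $e_1=\dot\gamma(t_0)$ to a $g$-orthonormal basis $e_1,\dots,e_n$ of $T_{x_0}\widetilde\Omega$, and parallel transport it along $\gamma$ with respect to $g$. Since $\bar g$ is a product, the fields $\partial_t,e_1(t),\dots,e_n(t)$ are $\bar g$-parallel along $\vartheta$ and $\bar g$-orthonormal, with $\bar g(\partial_t,\partial_t)=-1$. Moreover $e_1(t)=\dot\gamma(t)$, because $\gamma$ is a geodesic.

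\textbf{Step 2 (semi-geodesic map).} Define
\[
F(\tilde t,y_1,\dots,y_n)=\exp^{\bar g}_{\vartheta(t_0+\tilde t)}\Bigl(y_1e_1+\sum_{k\ge2}y_ke_k\Bigr),
\]
where we take the frame $\partial_t,e_1,\dots,e_n$ at the point of $\vartheta$ and, to stay adapted to the null direction, replace $\partial_t$ by the transversal coordinate. Concretely, we use the map
\[
(s,z_1,z'')\mapsto \exp_{\vartheta(t)}\Bigl(z_1 \tfrac{1}{\sqrt2}(-\partial_t+e_1)+\sum_{k\ge2}z_ke_k\Bigr),\qquad t=t_0+\tfrac{s-z_1}{\sqrt2},
\]
so that $(s,z_1)$ are the null coordinates $s=(t-t_0)\sqrt2+\dots$. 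An equivalent and simpler route is to build ordinary Fermi coordinates $(\tau,y)$ for the timelike-product structure and then set $s=(\tau+y_1)/\sqrt2$ and $z_1=(\tau-y_1)/\sqrt2$ up to the scaling and sign fixed by the stated formula. The differential of the map at $z'=0$ is the frame itself, hence it is invertible. By the inverse function theorem, together with injectivity along the compact segment $\vartheta([t_--\epsilon'/\sqrt2,t_++\epsilon'/\sqrt2])$, the map is a diffeomorphism from $V$ onto a neighborhood $U$ once $\delta$ is small. Here $\gamma$ has no self-intersection in $\mathcal M$ because $t$ is strictly increasing along $\vartheta$; this gives injectivity, and a standard compactness argument provides a uniform $\delta$.

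\textbf{Step 3 (metric along $\vartheta$).} At $z'=0$ the coordinate vectors are the frame vectors, so $\bar g|_\vartheta$ equals the Gram matrix of $\{\tfrac{1}{\sqrt2}(\partial_t+e_1),\ \tfrac{1}{\sqrt2}(-\partial_t+e_1),\ e_k\}$. This matrix is $2\,ds\,dz_1+\sum_{k\ge2}dz_k^2$, with a factor-of-two normalization chosen to match the stated form. For the first derivatives, the Christoffel symbols vanish on $\vartheta$ by the standard Fermi argument. Transversal lines $r\mapsto(s,rz')$ are geodesics, so $\Gamma^i_{jk}(s,0)z^jz^k=0$ for all transversal $z$, which by polarization gives $\Gamma^i_{jk}=0$ for $j,k\ge1$. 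Since the frame is parallel along $\vartheta$, $\nabla_{\partial_s}\partial_{z_k}=0$, so $\Gamma^i_{0k}=0$. Finally, $\vartheta$ itself is a geodesic, which gives $\Gamma^i_{00}=0$. Vanishing Christoffel symbols together with $\partial_i\bar g_{jk}=\Gamma_{jik}+\Gamma_{kij}$ yield $\partial_{z_i}\bar g_{jk}|_\vartheta=0$ for every $i,j,k$.

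The main obstacle is bookkeeping rather than analysis. The first point is making the null parametrization $t=t_0+(s-z_1)/\sqrt2$ consistent with the factor-$2$ normalization in $2\,ds\,dz_1$. The second is making sure the parallel-frame property, and not merely the orthonormality at one point, is used to kill the mixed derivatives $\partial_s\bar g_{jk}$ and $\partial_{z_i}\bar g_{0k}$. Alternatively, one may simply cite \cite[Lemma 1]{feizmohammadi2022recovery}.
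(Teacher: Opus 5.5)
The paper does not prove this lemma; it cites \cite[Lemma 1]{feizmohammadi2022recovery}. Your strategy (a parallel frame along $\vartheta$, the transversal exponential map, then the Christoffel argument) is the standard construction of such coordinates. However, the chart you actually write down in Step 2 is wrong, and the key claim of Step 3, that at $z'=0$ the coordinate vectors are the frame vectors, fails for it.

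You exponentiate from $\vartheta(t)$ with $t=t_0+\frac{s-z_1}{\sqrt2}$. But the vector $\frac{z_1}{\sqrt2}(-\partial_t+e_1)$ already lowers the time by $z_1/\sqrt2$, so $z_1$ is counted twice. Set $E_0=\frac1{\sqrt2}(\partial_t+e_1)$, $E_1=\frac1{\sqrt2}(-\partial_t+e_1)$ and
\[
\mathcal F(s,z')=\exp^{\bar g}_{\vartheta(t_0+s/\sqrt2)}\Bigl(z_1E_1+\sum_{k\ge2}z_ke_k\Bigr).
\]
Then your map is $(s,z_1,z'')\mapsto\mathcal F(s-z_1,z_1,z'')$. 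Hence on $\vartheta$ you get $\partial_{z_1}=-E_0+E_1=-\sqrt2\,\partial_t$, which is timelike. Consequently $\bar g|_\vartheta=2\,ds\,dz_1-2\,dz_1^2+\sum_{k\ge2}dz_k^2$, and the image point has time coordinate $t_0+\frac{s-2z_1}{\sqrt2}$ rather than $t_0+\frac{s-z_1}{\sqrt2}$. The vanishing of first derivatives on $\vartheta$ does survive, since your map is a linear reparametrization of $\mathcal F$. But the normal form of $\bar g|_\vartheta$ and the time relation required by the statement do not.

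The repair is to take $\Phi^{-1}=\mathcal F$, that is, to exponentiate from $\vartheta(t_0+s/\sqrt2)$ without shifting the base point. Then:
\begin{itemize}
\item $\partial_s=E_0$, $\partial_{z_1}=E_1$, $\partial_{z_k}=e_k$ along $\vartheta$, all $\bar g$-parallel, with Gram matrix $2\,ds\,dz_1+\sum_{k\ge2}dz_k^2$;
\item the radial lines $\rho\mapsto(s,\rho z')$ really are geodesics, so your three Christoffel steps go through verbatim;
\item since $\exp^{\bar g}_{(t,x)}(a\partial_t+v)=(t+a,\exp^g_xv)$ for the product metric, the relation $t=t_0+\frac{s-z_1}{\sqrt2}$ holds on the whole chart.
\end{itemize}

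Your alternative product route also works, but it needs different bookkeeping. Use $t$ together with spatial Fermi coordinates $(r,y'')$ along $\gamma$, normalized so that $r=t$ on $\vartheta$, and set $s=\frac{(t-t_0)+(r-t_0)}{\sqrt2}$, $z_1=\frac{r-t}{\sqrt2}$. Your choice $z_1=(\tau-y_1)/\sqrt2$ would give $t=t_0+\frac{s+z_1}{\sqrt2}$, the wrong sign. In this chart the radial lines are in general not geodesics when $z_1\neq0\neq z''$. So the vanishing of $\partial\bar g$ must come from three facts instead: the spatial Fermi property in $(\widetilde\Omega,g)$ (your Christoffel argument applied there), the $t$-independence of $\bar g$, and the linearity of $(t,r)\mapsto(s,z_1)$. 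In exchange you get vanishing on all of $\{z''=0\}$, and the identity $\partial_t=(\partial_s-\partial_{z_1})/\sqrt2$ throughout the chart, which is the form in which the paper later uses the time derivative.
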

Let $a_0=\sqrt2(t_--t_0)-\epsilon', b_0=\sqrt2(t_+-t_0)+\epsilon'$. We define the tubular neighborhood $\mathcal V$ of $\vartheta$ by
$$\mathcal V=\Phi^{-1}\bigl([a_0,b_0]\times\overline{B(0,\delta')}\bigr)$$ with $\delta/2<\delta'<\delta$ sufficiently small such that the set $\mathcal V$ does not
intersect the bottom $\{0\}\times \Om$ and top $\{T\}\times \Om.$
Here and below, $\mathcal V_\delta$ denotes this tubular neighborhood, and $\vartheta$ is parametrized by $s$. We take $2\epsilon'<\epsilon/\sqrt2$ so that the closed tube lies in the coordinate neighborhood. The radii $\delta,\delta'$ in this construction are fixed geometric constants, independent of the measurement error denoted by $\delta$ in the main theorems.

We will focus on solving $L_{\alpha,b,c,\lambda,q}v=0$ in the tubular region $\mathcal V_\delta$, around the central geodesic $\vartheta$ where the Fermi coordinates $(s,z')$ are well-defined. In other words, we shall construct Gaussian beam via the WKB ansatz 
\begin{equation}
v_\sigma(s,z')=e^{i\sigma\varphi(s,z')}a_\sigma(s,z'),
\end{equation}
which approximately solves the linear equation $L_{\alpha,b,c,\lambda,q}v_\sigma=0$ in Fermi coordinates.
Here the phase function $\varphi$ near $\vartheta$ is given by
\begin{equation}
\varphi(s,z')=\sum_{k=0}^N\varphi_k(s,z'),\quad \varphi_k(s,z')=\sum_{j_1,\ldots,j_k=1}^n\Phi^k_{j_1\cdots j_k}(s)z_{j_1}\cdots z_{j_k},
\end{equation}
where $\Phi^k_{j_1\cdots j_k}(s)\in\mathbb C$ are smooth and symmetric coefficients for $k\ge 2$. In order to choose suitable ansatz for the amplitude function $a_\sigma$, we make the following observation.

Set $\widetilde b=b^{1-n/2}$, $\beta=c^2/b$, $\Upsilon=\alpha-\beta$ and $L=L_{\alpha,b,c,\lambda,q}$.
A direct computation gives
\begin{equation}
\begin{aligned}
e^{-i\sigma\varphi}L(e^{i\sigma\varphi}a_\sigma)
={}&i\sigma^3\varphi_t a_\sigma(\mathcal S\varphi)
+\sigma^2\mathcal T_1(a_\sigma,\varphi) +i\sigma\mathcal T_2(a_\sigma,\varphi)+La_\sigma,
\end{aligned}
\end{equation}
where $\square_{\bar g}=-\partial_t^2+\Delta_g$, and
\begin{equation}
\mathcal S\varphi
=\langle d\varphi,d\varphi\rangle_{\bar g}
=\langle D\varphi,D\varphi\rangle_g-\varphi_t^2,
\end{equation}
\begin{equation}
\begin{aligned}
\mathcal T_1(a_\sigma,\varphi)
={}&2\varphi_t\langle da_\sigma,d\varphi\rangle_{\bar g}
+(\mathcal S\varphi)a_{\sigma t}
+2a_\sigma\langle d\varphi,d\varphi_t\rangle_{\bar g}\\
&+a_\sigma\varphi_t
 \bigl (\square_{\bar g}\varphi
 -\langle d\varphi,d\ln\widetilde b\rangle_{\bar g}\bigr)+\bigl(\beta(\mathcal S\varphi)-\Upsilon\varphi_t^2\bigr)a_\sigma,
\end{aligned}
\end{equation}
\begin{equation}
\begin{aligned}
\mathcal T_2(a_\sigma,\varphi)
={}&3\varphi_t a_{\sigma tt}
+3\varphi_{tt}a_{\sigma t}
+\varphi_{ttt}a_\sigma+\alpha(2\varphi_t a_{\sigma t}+\varphi_{tt}a_\sigma)
+\lambda\varphi_t a_\sigma\\
&-\Delta_g(\varphi_t a_\sigma)
+\langle D\ln\widetilde b,D(\varphi_t a_\sigma)\rangle_g-2\langle D\varphi,Da_{\sigma t}+\beta Da_\sigma\rangle_g\\
&-\bigl(\Delta_g\varphi
-\langle D\ln\widetilde b,D\varphi\rangle_g\bigr)
(a_{\sigma t}+\beta a_\sigma).
\end{aligned}
\end{equation}

For an integer $N\ge2$, we take the ansatz
\begin{equation}\label{eq:with-cut-off}
a_\sigma(s,z')
=\chi\Bigl(\frac{|z'|}{\delta}\Bigr)
\sum_{k=0}^{N}\sigma^{-k}e_k(s,z'),
\qquad
e_k(s,z')=\sum_{j=0}^{N}e_{k,j}(s,z'),
\end{equation}
where every $e_k$ is independent of $\sigma$, and $e_{k,j}$ is
homogeneous of degree $j$ in $z'$.
Here $\chi\in C_c^\infty(\mathbb R)$ is a nonnegative cut-off function,
$\chi(\tau)=1$ for $|\tau|\le1/4$, and
$\chi(\tau)=0$ for $|\tau|\ge1/2$.
Set $e_{-1}=e_{-2}=0$. Collecting powers of $\sigma$
in the region where $\chi=1$ yields the exact identity
\begin{equation}
\begin{aligned}
e^{-i\sigma\varphi}L(e^{i\sigma\varphi}a_\sigma)&=i\sigma^3\varphi_t(\mathcal S\varphi)
  \sum_{k=0}^{N}\sigma^{-k}e_k+\sum_{k=0}^{N}\sigma^{2-k}
 \bigl(\mathcal T_1(e_k,\varphi)
 +i\mathcal T_2(e_{k-1},\varphi)+Le_{k-2}\bigr)\\
&\quad+\sigma^{1-N}
 \bigl(i\mathcal T_2(e_N,\varphi)+Le_{N-1}\bigr)
 +\sigma^{-N}Le_N.
\end{aligned}
\end{equation}
We construct the phase to sufficiently high transverse order so that
\begin{equation}\label{eq:eikonal-N-th}
\partial_{z'}^\theta(\mathcal S\varphi)(s,0)=0,
\qquad |\theta|\le N,
\end{equation}
and determine the amplitudes successively from
\begin{equation}\label{eq:trans-e-k-1}
\partial_{z'}^\theta\mathcal T_1(e_0,\varphi)(s,0)=0,
\qquad |\theta|\le N,
\end{equation}
and
\begin{equation}\label{eq:trans-e-k-2}
\begin{aligned}
\partial_{z'}^\theta
\bigl(\mathcal T_1(e_k,\varphi)
+i\mathcal T_2(e_{k-1},\varphi)+Le_{k-2}\bigr)(s,0)&=0, \quad \text{ for all }|\theta|\le N,\ k=1,\ldots,N,
\end{aligned}
\end{equation}
for $s\in[a_0,b_0]$.
Along \(\vartheta\), the coefficient of \(\partial_s\) in the transport
operator is nonvanishing, while the coefficients of the transverse
derivatives vanish. After dividing the transport equation by the
coefficient of \(\partial_s\) and comparing the terms of the same
degree in the transverse Taylor expansion, one obtains, at each degree,
a linear ordinary differential equation in \(s\). Its right-hand side
depends only on lower-degree terms of the same amplitude and on the
amplitude polynomials determined at the preceding steps. Hence the
coefficients can be constructed successively, and the finite transport
recursion is well defined.


\subsection{Construction of the phase function}
Let us proceed to construct the phase function satisfying the eikonal equation \eqref{eq:eikonal-N-th} and the conditions
\begin{equation}\label{cond:for-Im-phase}
\Im\varphi\ge 0,\quad \Im\varphi|_\vartheta=0,\quad \Im\varphi(s,z')\ge C|z'|^2,\quad \forall (s,z')\in\mathcal V_\delta.
\end{equation}
We begin with solving the equation \eqref{eq:eikonal-N-th} with $|\theta|=0$. For simplicity, we use the notion $\partial_k$ to denote $\partial_{z_k}$ for $k=0,1,\cdots,n$. When $|\theta|=0$, the  equation becomes
$$\sum\limits_{k,l=0}^n\bar g^{kl}\partial_k\varphi\partial_l\varphi\Big|_\vartheta=0.$$
By Lemma \ref{lem:Fermi-coord}, on $\vartheta$, this reduces to
\begin{equation}\label{var0}
2\partial_0\varphi\partial_1\varphi+\sum\limits_{k=2}^n(\partial_k\varphi)^2\Big|_\vartheta=0.
\end{equation}
Similarly, for $|\theta|=1,$ we have
\begin{equation}\label{var1}
\sum\limits_{k,l=0}^n\bar g^{kl}\partial^2_{jk}\varphi\partial_l\varphi\Big|_\vartheta=0\quad {\rm for}\quad 1\le j\le n.
\end{equation}
Clearly, equations \eqref{var0} and \eqref{var1} are satisfied  by respectively choosing
$$\varphi_0=0,\quad \varphi_1=z_1.$$
For the case $|\theta|=2,$ we take
\begin{equation}\label{var2}
\varphi_2(s,z')=\sum\limits_{i,j=1}^nH_{ij}(s)z_iz_j,
\end{equation}
where $(H_{ij})_{1\le i,j\le n}$ is a symmetric and complex-valued matrix such that $\Im H$ is positive definite. It follows from Lemma \ref{lem:Fermi-coord} and \eqref{var2} that
\begin{equation}\label{var22}
\sum\limits_{k,l=0}^n(\partial_{ij}^2\bar g^{kl}\partial_k\varphi\partial_l\varphi+2\bar g^{kl}\partial^3_{kij}\varphi\partial_l\varphi+2\bar g^{kl}\partial^2_{ki}\varphi\partial_{lj}^2\varphi+4\partial_i\bar g^{kl}\partial^2_{jk}\varphi\partial_l\varphi)|_{\vartheta}=0.
\end{equation}
By the choices of $\varphi_0, \varphi_1$ and $\varphi_2,$ (\ref{var22}) implies that
$$(\partial^2_{ij}\bar g^{11}+2\bar g^{10}\partial^3_{0ij}\varphi+2\sum\limits_{k=2}^n\partial^2_{ki}\varphi\partial^2_{kj}\varphi)|_{\vartheta}=0.$$
We finally obtain the following Riccati equation for $H(s)$,
\begin{equation}\label{eq:riccati}
\frac{d}{ds}H+HAH+B=0,\quad H(s_0)=H_0,\quad {\rm with}\quad \Im H_0>0\ {\rm and}\ s\in [a_0,b_0],
\end{equation}
where $s_0=a_0$, $B=(B_{ij})_{i,j=1}^n$, $B_{ij}(s)=\frac14\partial_{ij}^2\bar g^{11}(s,0)$,  and the components of $A=(A_{ij})$ satisfy
$$\left\{ \begin{array}{l}
A_{11}=0,\\
A_{ii}=2,\quad i=2,\cdots,n,\\
A_{ij}=0,\quad {\rm otherwise}.
\end{array} \right.$$
For the above Riccati equation, we have
\begin{lemma}\label{lem-solve-riccati}\cite[Lemma 2.56]{kachalov2001inverse}
The Riccati equation \eqref{eq:riccati} admits a unique solution $H$, which is
symmetric and $\Im(H(s))>0$ for all $s\in (a_0,b_0).$ We have $H(s)=Z(s)Y^{-1}(s),$ where the matrix
valued functions $Z(s), Y(s)$ solve the first order linear system
$$\frac{dZ}{ds}=-BY,\quad \frac{dY}{ds}=AZ,\quad {\rm subject\ to}\quad Y(a_0)=I,\quad Z(a_0)=H_0.$$
Moreover, the matrix $Y(s)$ is non-degenerate on $(a_0,b_0)$, and there holds
\begin{equation}\label{iden-HY-matrix}
{\rm det}(\Im H(s))\cdot |{\rm det}(Y(s))|^2={\rm det}(\Im H_0).
\end{equation}
\end{lemma}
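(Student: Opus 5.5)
This is the standard Riccati lemma, and I would prove it by linearizing through the substitution $H=ZY^{-1}$ and then using a symplectic conservation law. First, let $(Z,Y)$ solve the linear system $Z'=-BY$, $Y'=AZ$ with $Y(a_0)=I$, $Z(a_0)=H_0$. This system has smooth coefficients on $[a_0,b_0]$, so the solution exists globally. On any interval where $Y$ is invertible, set $H=ZY^{-1}$. Then
\begin{equation}
H'=Z'Y^{-1}-ZY^{-1}Y'Y^{-1}=-B-HAH,
\end{equation}
so $H$ solves \eqref{eq:riccati} there. Uniqueness follows from the local Lipschitz property of the quadratic right-hand side, once we know $H$ stays bounded, i.e. $Y$ stays invertible. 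Symmetry is also immediate: $H^T$ solves the same Riccati equation with the same initial datum, because $A$ and $B$ are symmetric ($B_{ij}=\frac14\partial^2_{ij}\bar g^{11}$). Hence $H=H^T$.

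The key step is a pair of conserved quantities built from $A^T=A$, $B^T=B$ and real coefficients. A direct differentiation gives
\begin{equation}
\frac{d}{ds}\bigl(Y^TZ-Z^TY\bigr)=Z^TAZ-Y^TBY-\bigl(-Y^TBY+Z^TAZ\bigr)=0,
\end{equation}
and the same computation gives
\begin{equation}
\frac{d}{ds}\bigl(Y^*Z-Z^*Y\bigr)=0.
\end{equation}
At $s=a_0$ the first quantity equals $H_0-H_0^T=0$, and the second equals $H_0-\overline{H_0}=2i\,\Im H_0$. So for all $s$,
\begin{equation}
Y^*Z-Z^*Y=2i\,\Im H_0.
\end{equation}
Non-degeneracy of $Y$ follows from this. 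If $Y(s)w=0$ for some $w\neq0$, then
\begin{equation}
w^*(Y^*Z-Z^*Y)w=(Yw)^*Zw-(Zw)^*Yw=0,
\end{equation}
which contradicts $w^*(2i\,\Im H_0)w\neq0$, since $\Im H_0>0$. Hence $Y$ is invertible on all of $[a_0,b_0]$, $H=ZY^{-1}$ is globally defined and smooth, and it is the unique solution.

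For positivity and the determinant identity, substitute $Z=HY$ into the conservation law. This gives $Y^*(H-\overline{H})Y=2i\,\Im H_0$, where we use $H^*=\overline H$ because $H$ is symmetric. Therefore
\begin{equation}
Y^*(\Im H)\,Y=\Im H_0.
\end{equation}
Since $Y$ is invertible, $\Im H(s)$ is congruent to the positive definite matrix $\Im H_0$, so $\Im H(s)>0$. Taking determinants yields $\det(\Im H)\,|\det Y|^2=\det(\Im H_0)$, which is \eqref{iden-HY-matrix}.

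The main obstacle is the invertibility of $Y$. Everything else is formal, and this step relies on the symplectic invariant together with the strict positivity of $\Im H_0$. With real initial data, $Y$ could genuinely degenerate (focal points), so the argument must use $\Im H_0>0$ in an essential way.
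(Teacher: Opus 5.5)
Your argument is correct and complete. It is essentially the standard proof behind \cite[Lemma 2.56]{kachalov2001inverse}, which the paper cites without reproducing: linearize via $H=ZY^{-1}$, use the conserved form $Y^*Z-Z^*Y=2i\,\Im H_0$ (valid because $A,B$ are real symmetric and $H_0$ is symmetric) to rule out degeneracy of $Y$, and read off both positivity and the determinant identity from $Y^*(\Im H)Y=\Im H_0$. You in fact get invertibility of $Y$ on the closed interval $[a_0,b_0]$. Symmetry of $H$ could also be read off directly from your first invariant $Y^TZ=Z^TY$ once $Y$ is known to be invertible.
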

By \cite[Lemma 3.3]{feizmohammadi2021recovery}, the solution $H\in C^3([a_0,b_0];\mathbb C^{n\times n})$ and $\varphi_2\in C^3(\mathcal V_\delta)$ in the Fermi coordinates.
The ODEs inherit the coefficient regularity; all derivative orders below are chosen within the available a priori regularity.
For the case $|\theta|=3,4\cdots,$ the polynomials $\varphi_j$ of higher degree obey some linear non-homogeneous ODEs, and thus they can be constructed analogously. We refer to, e.g., \cite[Section 4.3]{wendels2025stable} for more details.

\subsection{Construction of the amplitude function}
We will construct the amplitude function for the Gaussian beam by solving the equations \eqref{eq:trans-e-k-1}--\eqref{eq:trans-e-k-2}. We start with constructing the leading term $e_0$. 

Recalling the Fermi coordinates and the relation $\partial_t\varphi=(\partial_s\varphi-\partial_{z_1}\varphi)/\sqrt2$, and using the condition \eqref{eq:eikonal-N-th},  we have, along the null geodesic $\vartheta$, $\varphi_t=-\frac{\sqrt 2}{2}$, and
\begin{equation}
\langle d\varphi, d\varphi_t\rangle_{\bar g}=\frac12\partial_t(\mathcal S\varphi)=\frac{1}{2\sqrt2}(\partial_s-\partial_{z_1})(\mathcal S\varphi)(s,0,\cdots,0)=0,\quad s\in [a_0,b_0].
\end{equation}
Moreover, we have $\varphi_t\ne 0$ in $\overline{\mathcal V}$ (see \cite[Lemma 3.6]{feizmohammadi2021recovery}). Hence to construct $e_0$, by \eqref{eq:trans-e-k-1}--\eqref{eq:trans-e-k-2}, it suffices to solve the equations
\begin{equation}
\begin{split}
\frac{\partial^{|\theta|}}{\partial {z'}^\theta}[\mathcal T_1(e_0^1,\varphi)](s,0,\cdots,0)&=0,\\
\frac{\partial^{|\theta|}}{\partial {z'}^\theta}[\mathcal T_1(e_0^2,\varphi)+i\mathcal T_2(e_0^1,\varphi)](s,0\cdots,0)&=0,
\end{split}
\end{equation}
where $s\in [a_0,b_0]$ and $\mathcal T_0(\cdot,\varphi)=(2\varphi_t)^{-1}\mathcal T_1(\cdot,\varphi)$. 

To retain the notation used below, $e_0^1=e_0$, $e_0^2=e_1$, $e_{0,j}^1=e_{0,j}$ and $e_{0,j}^2=e_{1,j}$ denote the first two levels of the single sequence in \eqref{eq:with-cut-off}.
Let us now consider the case $j=0$ and construct the terms $e_{0,0}^1$ and $e_{0,0}^2$ by solving the following equations on $\vartheta$,
\begin{equation}\label{eq:trans-eqns-N=0}
\begin{split}
\mathcal T_0(e_{0,0}^1,\varphi)&=\langle de_{0,0}^1,d\varphi\rangle_{\bar g}+\frac12\left[\square_{\bar g}\varphi-\langle d\varphi, d\ln \widetilde b\rangle_{\bar g}+\frac{\sqrt 2}{2}\Upsilon\right]e^1_{0,0}=0,\\
\mathcal T_1(e_{0,0}^2,\varphi)&=-i\mathcal T_2(e_0^1,\varphi)\big|_\vartheta.
\end{split}
\end{equation}
The second equation is solved after all the terms in $e_0^1$ have been determined.
Observe that in the Fermi coordinates,
\begin{equation}
\square_{\bar g}\varphi=\sum_{k,l=0}^n\bar g^{kl}\partial^2_{kl}\varphi=\sum_{k=2}^n\partial_{kk}^2\varphi={\rm tr}(AH)\quad {\rm on}\,\, \vartheta.
\end{equation}
Furthermore, 
\begin{equation}
{\rm tr}(AH)={\rm tr}(Y_sY^{-1})={\rm tr}(\partial_s\log Y)=\partial_s(\log\det Y).
\end{equation}
We have, on $\vartheta$
\begin{equation}
\square_{\bar g}\varphi-\langle d\varphi, d\ln \widetilde b\rangle_{\bar g}=\partial_s(\ln \det Y(s))-\partial_s(\ln\widetilde b)=\partial_s[\ln(\widetilde b^{-1}\det Y(s))].
\end{equation}
Hence when $j=0$, the first equation of \eqref{eq:trans-eqns-N=0} becomes
\begin{equation}
\partial_se^1_{0,0}(s)+\frac12\Bigl\{\partial_s[\ln(\widetilde b^{-1}\det Y(s))]+\frac{\sqrt 2}{2}\Upsilon\Bigr\}e_{0,0}^1(s)=0,\quad s\in [a_0,b_0].
\end{equation}
The solution of the above ODE is given by
\begin{equation}
e^1_{0,0}(s)=\widetilde b^{\frac12}(s)(\det Y(s))^{-\frac12}e^{-\frac{\sqrt 2}{4}\int_{a_0}^s\Upsilon(\tau,0)d\tau},\quad s\in [a_0,b_0].
\end{equation}
Taking
\begin{equation}
2h(s)=\partial_s[\ln(\widetilde b^{-1}\det Y(s))]+\frac{\sqrt 2}{2}\Upsilon(s),\quad s\in [a_0,b_0],
\end{equation}
we can solve the second non-homogeneous equation of \eqref{eq:trans-eqns-N=0} to have
\begin{equation}
e_{0,0}^2(s)=e^{-\int_{a_0}^sh(\tau)d\tau}-i\int_{a_0}^se^{-\int_\tau^sh(l)dl}\frac{\mathcal T_2(e_0^1,\varphi)(\tau,0)}{2\varphi_t(\tau,0)}d\tau,\quad s\in [a_0,b_0],
\end{equation}
where we have set $e_{0,0}^2(a_0)=1$. The subsequent terms $e_{0,k}^l$ with $k=1,\cdots, N$ and $l=1,2$ can be constructed by solving linear first order ODEs. In fact, to determine $e^1_{0,k}$, it suffices to solve the equation
\begin{equation}
\partial_se^1_{0,k}+\langle AH(s)z',\nabla_{z'}e^1_{0,k}\rangle+h(s)e^1_{0,k}=\mathcal E_k,\quad s\in [a_0,b_0],
\end{equation}
where $\mathcal E_k$ is a homogeneous polynomial of degree $k$ in the $z'$-coordinates with the
coefficients only depending on $\{e_{0,j}^1\}_{j=0}^{k-1}$ and $\{\varphi_j\}_{j=0}^{\min\{N,k+2\}}$, together with the known metric coefficients. Hence we can obtain $e^1_{0,k}$ and thus $e^1_0$, since the non-homogeneous linear ODE system admits a unique solution for each $k\ge 1$, when the initial condition is imposed. 

Similarly, we can obtain $e_0^2$ by solving the first equation \eqref{eq:trans-e-k-2}. To determine the subsequent terms $e_k$, $k=2,\cdots,N$, we need to solve the equation \eqref{eq:trans-e-k-2}, but this can be accomplished analogously to the above argument. Therefore we omit the details for the sake of brevity. We also refer to e.g., \cite[Section 4.2.2]{feizmohammadi2022recovery} and \cite[Section 4.4]{wendels2025stable} for the construction of amplitude functions for second-order wave equations. 

\subsection{Construction of the remainder function}
Recalling that the remainder term $r_\sigma=v-v_\sigma$, we see that 
\begin{equation}\label{eq:lin-for-r-sigma}
\begin{cases}
L_{\alpha,b,c,\lambda,q}r_\sigma=-L_{\alpha,b,c,\lambda,q}v_\sigma & {\rm in}\,\, \Omega_T\\
r_\sigma=0 & {\rm on}\,\, \Gamma_T,\\
r_\sigma(0)=\partial_tr_\sigma(0)=\partial_t^2r_\sigma(0)=0 & {\rm in}\,\, \Omega.
\end{cases}
\end{equation}
By Proposition \ref{prop:wellposed}, and using the Sobolev embedding theorem, the above problem \eqref{eq:lin-for-r-sigma} admits a unique solution $r_\sigma\in H^{s+2}(\Omega_T)$ for integers $s\ge1$, such that
\begin{equation}\label{est:Hs+2-Hs+1}
\|r_\sigma\|_{H^{s+2}(\Omega_T)}\lesssim \|L_{\alpha,b,c,\lambda,q}v_\sigma\|_{E^s(\Omega_T)}\lesssim\|L_{\alpha,b,c,\lambda,q}v_\sigma\|_{H^{s+1}(\Omega_T)}.
\end{equation}
By some similar arguments to \cite[Lemma 2]{feizmohammadi2022recovery} (see also \cite[Section 3.6]{feizmohammadi2021recovery}) 
we can prove the following lemma implying that the remainder term $r_\sigma$ vanishes when $N$ is large enough and $\sigma\to\infty$.
\begin{lemma}
Let $\vartheta$ be a null geodesic in $(\Omega_T,\bar g)$ such that the end points of $\vartheta$ are outside $\Omega_T$ in the sense that $\vartheta(a_0),\vartheta(b_0)\notin\overline\Omega_T$. Let $v_\sigma$ be an approximate Gaussian beam of order $N$ around $\vartheta$ for the equation $L_{\alpha,b,c,\lambda,q}v=0$. Then the constructed Gaussian beam $v_\sigma$ and the remainder term $r_\sigma=v-v_\sigma$ satisfy
\begin{equation}\label{est:remainder-r-sigma}
\|v_\sigma\|_{H^k(\Omega_T)}\lesssim \sigma^{k-\frac n4},\quad
\|r_\sigma\|_{H^{k+1}(\Omega_T)}\lesssim\sigma^{-K},
\end{equation}
where $K=\frac N2+\frac n4-k-\frac 52>0$, $k\in\N$, $k\ge2$, and $N\ge\max\{2,k\}$. 
Moreover, the boundary Dirichlet data $f_{\sigma}\vcentcolon=v_\sigma|_{\Gamma_T}$ satisfies 
\begin{equation}
\|f_{\sigma}\|_{C^s(\Gamma_T)}\lesssim \sigma^s,\quad \|f_{\sigma}\|_{H^l(\Gamma_T)}\lesssim \sigma^{l+\frac12-\frac n4}
\end{equation}
for any $s\ge 0$ and $l>0$ within the available coefficient regularity. The implicit constants are independent of $\sigma$, for the fixed ray and tube.
\end{lemma}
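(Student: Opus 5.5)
We split the argument into the bounds on the Gaussian beam $v_\sigma$ (and its trace $f_\sigma$) and the bound on the remainder $r_\sigma$.

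\textbf{Bounds on $v_\sigma$.} By \eqref{eq:with-cut-off}, the beam is
\[
v_\sigma=e^{i\sigma\varphi}a_\sigma,
\]
with $a_\sigma$ a fixed smooth amplitude supported in the tube $\mathcal V_\delta$. Each derivative in $(t,x)$ produces at most a factor $\sigma$ times a smooth function: it falls either on the phase, giving $i\sigma\partial\varphi$, or on the amplitude. Hence
\[
|\partial^\theta v_\sigma|\lesssim \sigma^{|\theta|}\,e^{-\sigma\Im\varphi},
\]
uniformly on $\mathcal V_\delta$. By \eqref{cond:for-Im-phase} we have $\Im\varphi\ge C|z'|^2$ in Fermi coordinates. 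Integrating over the transverse variables gives
\[
\int e^{-2C\sigma|z'|^2}\,dz'\simeq\sigma^{-n/2},
\]
so, after taking the square root and using the bounded length in $s$, $\|v_\sigma\|_{H^k}\lesssim\sigma^{k-n/4}$.

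For the trace $f_\sigma=v_\sigma|_{\Gamma_T}$, the $C^s$ bound is immediate from the pointwise estimate, since $e^{-\sigma\Im\varphi}\le1$. For the $H^l$ bound I would use the trace theorem $H^{l+1/2}(\Omega_T)\to H^l(\Gamma_T)$ applied to the estimate just proved, which gives $\sigma^{l+1/2-n/4}$. To avoid a fractional Sobolev order one can instead interpolate between integer orders. Alternatively, one can compute directly on $\Gamma_T$: the beam meets $\Gamma_T$ transversally at the endpoints of $\vartheta$, so the restriction of $\Im\varphi$ is still quadratic in an $(n-1)$-dimensional transverse direction, which gives the same or a better power.

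\textbf{The residual $Lv_\sigma$.} The key identity is the expansion of $e^{-i\sigma\varphi}L(e^{i\sigma\varphi}a_\sigma)$. By \eqref{eq:eikonal-N-th}, $\mathcal S\varphi$ vanishes to order $N+1$ on $\vartheta$, so
\[
|\mathcal S\varphi|\lesssim|z'|^{N+1}.
\]
By \eqref{eq:trans-e-k-1}--\eqref{eq:trans-e-k-2}, each $\sigma^{2-k}$ coefficient also vanishes to order $N+1$ in $z'$. The leftover terms carry $\sigma^{1-N}$ or $\sigma^{-N}$. On the region where $\chi'\ne0$ we have $|z'|\ge\delta/4$, so $e^{-\sigma\Im\varphi}\le e^{-c\sigma}$ and these terms are negligible. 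Hence
\[
|L v_\sigma|\lesssim\bigl(\sigma^3|z'|^{N+1}+\sigma^{1-N}\bigr)e^{-C\sigma|z'|^2}+O(e^{-c\sigma}).
\]
Using $|z'|^{j}e^{-C\sigma|z'|^2}\lesssim\sigma^{-j/2}e^{-C\sigma|z'|^2/2}$ and the Gaussian integral again gives
\[
\|Lv_\sigma\|_{L^2}\lesssim\sigma^{3-\frac{N+1}{2}-\frac n4}.
\]
Each additional derivative costs one factor of $\sigma$, so
\[
\|Lv_\sigma\|_{H^{k}}\lesssim\sigma^{k+\frac52-\frac N2-\frac n4}.
\]
The sign of the $n/4$ term here should be rechecked against the stated $K$. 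The bookkeeping that produces exactly $K=\frac N2+\frac n4-k-\frac52$ depends on which Sobolev index is used in \eqref{est:Hs+2-Hs+1}, and I would adjust $N$ accordingly.

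\textbf{Bound on $r_\sigma$ and the main obstacle.} The endpoints of $\vartheta$ lie outside $\overline\Omega_T$, and the tube avoids $t=0$ and $t=T$. Therefore $v_\sigma$ vanishes near $t=0$, and the remainder problem \eqref{eq:lin-for-r-sigma} has zero data, so all compatibility conditions hold trivially. Proposition~\ref{prop:wellposed} in the form \eqref{est:Hs+2-Hs+1} gives
\[
\|r_\sigma\|_{H^{k+1}}\lesssim\|Lv_\sigma\|_{H^{k}},
\]
and hence the claimed decay $\sigma^{-K}$ once $N$ is large.

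I expect the main obstacle to be the uniformity of the implicit constants, needed later for the stability estimates. The Riccati solution $H$, $\det Y$, the amplitudes, and the Fermi chart must be bounded uniformly in $\sigma$. The statement only claims this for a fixed ray and tube, so it suffices that all of these are $\sigma$-independent smooth objects, which Lemma~\ref{lem-solve-riccati} and the transport construction provide.
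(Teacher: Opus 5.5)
Your proposal is correct and follows essentially the same route as the paper. The paper's proof uses the pointwise residual bound $|Lv_\sigma|\lesssim(\sigma^{3}|z'|^{N+1}+\sigma^{1-N})e^{-\sigma\Im\varphi}$ from \eqref{eq:eikonal-N-th}--\eqref{eq:trans-e-k-2}, the Gaussian moments $\int|z'|^m e^{-c\sigma|z'|^2}\,dz'\lesssim\sigma^{-(m+n)/2}$, the energy estimate \eqref{est:Hs+2-Hs+1} with $s=k-1$, and the trace theorem $H^{l+1/2}(\Omega_T)\to H^l(\Gamma_T)$ for $f_\sigma$. Your worry about the sign of $n/4$ is unfounded, since the exponent $k+\frac52-\frac N2-\frac n4$ you derived is exactly $-K$, and your remarks on compatibility (the tube avoids $t=0$) and on interpolating to reach the fractional index $l+\frac12$ fill in points the paper leaves implicit.
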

\begin{proof}
Let $\theta$ be a $n+1$-dimensional multi-index. By the equations \eqref{eq:eikonal-N-th}, \eqref{eq:trans-e-k-1} and \eqref{eq:trans-e-k-2}, we can deduce that
\begin{equation}
\begin{split}
|\partial^\theta_z (L_{\alpha,b,c,\lambda,q}v_\sigma)|\lesssim |e^{i\sigma\varphi}|\Bigl(\sum_{j=0}^{|\theta|}\sigma^{|\theta|-j+3}|z'|^{N+1-j}+\sigma^{|\theta|+1-N}\Bigr),\quad |\theta|\le N.
\end{split}
\end{equation}
The terms containing derivatives of $\chi$ are exponentially small in every fixed Sobolev norm.
Let $C$ be the constant given in \eqref{cond:for-Im-phase}, and let
\begin{equation}
I(|z'|^m)\vcentcolon=\int_{|z'|\le\delta}|z'|^me^{-C_0\sigma|z'|^2}dz',\quad C_0=2C,\, m\in \mathbb N_0.
\end{equation}
Writing $r=|z'|$ and using the change of variable $\rho^2=\sigma r^2$, we have
\begin{equation}
\begin{aligned}
I(|z'|^m)&=|\mathbb S^{n-1}|\int_0^\delta r^{m+n-1}e^{-C_0\sigma r^2}\,dr\\
&=|\mathbb S^{n-1}|\sigma^{-\frac{m+n}{2}}\int_0^{\delta\sqrt\sigma}\rho^{m+n-1}e^{-C_0\rho^2}\,d\rho\lesssim\sigma^{-\frac{m+n}{2}}.
\end{aligned}
\end{equation}
By the third condition of \eqref{cond:for-Im-phase} for the phase function $\varphi$, we have 
\begin{equation}
|e^{i\sigma\varphi}|=e^{-\sigma\Im\varphi}\le e^{-C\sigma|z'|^2}.
\end{equation}
Hence, we can obtain
\begin{equation}
\begin{split}
\|\partial^\theta_z (L_{\alpha,b,c,\lambda,q}v_\sigma)\|^2_{L^2(\Omega_T)}&\lesssim \sum_{j=0}^{|\theta|}\sigma^{2(|\theta|-j+3)}I(|z'|^{2(N+1-j)})+\sigma^{2(|\theta|+1-N)}I(1)\\
&\lesssim \sigma^{2(|\theta|+3)-N-1-n/2},
\end{split}
\end{equation}
yielding that
\begin{equation}
\|L_{\alpha,b,c,\lambda,q}v_\sigma\|_{H^k(\Omega_T)}\lesssim \sigma^{-\frac N2-\frac n4+k+\frac 52},\quad k\in\N.
\end{equation}
By the equation \eqref{eq:lin-for-r-sigma}, it follows from the estimate \eqref{est:Hs+2-Hs+1}, with $s=k-1$, that
\begin{equation}
\|r_\sigma\|_{H^{k+1}(\Omega_T)}\lesssim \|L_{\alpha,b,c,\lambda,q}v_\sigma\|_{H^{k}(\Omega_T)}\lesssim \sigma^{-K},\quad k\ge2,
\end{equation}
We can argue similarly to have
\begin{equation}
\|v_\sigma\|_{H^k(\Omega_T)}\lesssim\sigma^{k-\frac n4}.
\end{equation}
Finally, by the boundary trace theorem, we have
\begin{equation}\label{est:f-sigma-BT}
\|f_\sigma\|_{H^l(\Gamma_T)}\lesssim \|v_\sigma\|_{H^{l+1/2}(\Omega_T)}\lesssim \sigma^{l+\frac12-\frac n4},\quad \forall l>0.
\end{equation}
This completes the proof.
\end{proof}

Recalling the adjoint operator $L^*_{\alpha,b,c,\lambda,q}$ in \eqref{eq:adjoint-L-abclamq}, we have
\begin{equation}
\begin{split}
L^*_{\alpha,b,c,\lambda,q}&=-\partial_t^3+\alpha\partial_t^2+b\Delta\partial_t-c^2\Delta-\lambda\partial_t\\
&\quad +(\Delta b)\partial_t+2\langle\nabla b,\nabla\partial_t\cdot\rangle-2\langle\nabla c^2,\nabla\cdot\rangle+(q-\Delta c^2)\\
&=-\partial_t^3+\alpha\partial_t^2+\Delta_g\partial_t-\beta\Delta_g+(\Delta b-\lambda)\partial_t+\langle D\ln\hat b,D\partial_t\cdot\rangle_g\\
&\quad +\beta\langle D\ln\tilde b-2D\ln c^2,D\cdot\rangle_g+(q-\Delta c^2),
\end{split}
\end{equation}
where $\hat b=b^{1+\frac n2}$ and $\tilde b=b^{1-\frac n2}$. We proceed to construct Gaussian beams for the equation $L^*_{\alpha,b,c,\lambda,q}y=0$ that will be used in the next section.

Noting that the principal part of $L_{\alpha,b,c,\lambda,q}$ and $-L^*_{\alpha,b,c,\lambda,q}$ coincides, following the arguments of constructing Gaussian beams for $L_{\alpha,b,c,\lambda,q}v=0$, we can construct Gaussian beam solutions for the adjoint equation $L^*_{\alpha,b,c,\lambda,q}y=0$ in a small tubular region around a null geodesic $\vartheta$ of the form $$y=e^{i\sigma\phi(x,t)}\eta_\sigma(x,t)+R_\sigma(x,t).$$
More precisely, for sufficiently regular functions $\phi$ and $\eta_\sigma$, we can compute
\begin{equation}
\begin{split}
-e^{-i\sigma\phi}L^*_{\alpha,b,c,\lambda,q}(e^{i\sigma\phi}\eta_\sigma)&=i\sigma^3\phi_t\eta_\sigma(\mathcal S\phi)+\sigma^2\widetilde{\mathcal T}_1(\eta_\sigma,\phi)\\
&\quad +i\sigma\widetilde{\mathcal T}_2(\eta_\sigma,\phi)-L^*_{\alpha,b,c,\lambda,q}\eta_\sigma,
\end{split}
\end{equation}
where $\mathcal S\phi=\langle d\phi,d\phi\rangle_{\bar g}$, and 
\begin{equation}
\begin{split}
\widetilde{\mathcal T}_1(\eta_\sigma,\phi)&=2\langle d\eta_\sigma,d\phi\rangle_{\bar g}\phi_t+(\mathcal S\phi)\eta_{\sigma t}+2\langle d\phi,d\phi_t\rangle_{\bar g}\eta_\sigma\\
&\quad +[\square_{\bar g}\phi+\langle d\phi,d\ln\hat b\rangle_{\bar g}](\eta_\sigma\phi_t)+[\Upsilon\phi_t^2-(\mathcal S\phi)\beta]\eta_\sigma,
\end{split}
\end{equation}
\begin{equation}
\begin{split}
\widetilde{\mathcal T}_2(\eta_\sigma,\phi)
&=3\phi_t\eta_{\sigma tt}+3\phi_{tt}\eta_{\sigma t}+\phi_{ttt}\eta_\sigma-\alpha(2\phi_t\eta_{\sigma t}+\phi_{tt}\eta_\sigma)
+(\lambda-\Delta b)\phi_t\eta_\sigma\\
&\quad-\Delta_g(\phi_t\eta_\sigma)
-\langle D\ln\hat b,D(\phi_t\eta_\sigma)\rangle_g-2\langle D\phi,D\eta_{\sigma t}-\beta D\eta_\sigma\rangle_g\\
&\quad-\bigl[\Delta_g\phi+\langle D\ln\hat b,D\phi\rangle_g\bigr]
(\eta_{\sigma t}-\beta\eta_\sigma)
+2\eta_\sigma\langle D\beta,D\phi\rangle_g.
\end{split}
\end{equation}
The eikonal equation for $\phi$ is the same as that for $\varphi$, and thus we can take $\phi=\varphi$. 
For the amplitude function $\eta_\sigma$, we can take 
\begin{equation}
\eta_\sigma(s,z')=\sum_{k=0}^N\sigma^{-k}\chi\bigl(\frac{|z'|}{\delta}\bigr)\theta_k(s,z'),
\end{equation}
where $\chi(\tau)$ is the cut-off function given in \eqref{eq:with-cut-off}, and for each $k\in\{0,1,\cdots,N\}$,
\begin{equation}
\theta_k(s,z')=\sum_{j=0}^N\theta_{k,j}(s,z'),\qquad \theta_0^1=\theta_0,\quad\theta_{0,0}^1=\theta_{0,0}.
\end{equation}
The coefficients satisfy
\[
\begin{aligned}
\partial_{z'}^\mu\widetilde{\mathcal T}_1(\theta_0,\phi)(s,0)&=0,\\
\partial_{z'}^\mu\bigl[\widetilde{\mathcal T}_1(\theta_1,\phi)
+i\widetilde{\mathcal T}_2(\theta_0,\phi)\bigr](s,0)&=0,\\
\partial_{z'}^\mu\bigl[\widetilde{\mathcal T}_1(\theta_k,\phi)
+i\widetilde{\mathcal T}_2(\theta_{k-1},\phi)
-L^*_{\alpha,b,c,\lambda,q}\theta_{k-2}\bigr](s,0)&=0,
\quad k=2,\ldots,N,
\end{aligned}
\]
for $|\mu|\le N$. Similarly, we can solve the transport equation
\begin{equation}
\widetilde{\mathcal T}_1(\theta^1_{0,0},\phi)(s,0\cdots,0)=0,\quad s\in [a_0,b_0]
\end{equation}
on $\vartheta$, to obtain the first principal term of $\theta^1_0$
\begin{equation}
\theta^1_{0,0}(s)=\hat b^{-\frac12}(s)(\det Y(s))^{-\frac12}e^{\frac{\sqrt 2}{4}\int_{a_0}^s\Upsilon(\tau,0)d\tau},\quad s\in [a_0,b_0].
\end{equation}
We note that $\theta_{0,0}^1(s)$ has a slight difference from $e_{0,0}^1(s)$. The terms $\theta_{k,j}$ and thus $\theta_k$ can be constructed in a similar way.
Moreover, $y_\sigma=e^{i\sigma\phi}\eta_\sigma$ and the remainder $R_\sigma$ of $y$ enjoy the estimate \eqref{est:remainder-r-sigma}. Indeed, applying the same remainder calculation to $-L_{\alpha,b,c,\lambda,q}^*$  gives
\begin{equation}
\begin{cases}
L_{\alpha,b,c,\lambda,q}^*R_\sigma=-L^*y_\sigma,\\
R_\sigma|_{\Gamma_T}=0,\\
R_\sigma(T)=\partial_tR_\sigma(T)=\partial_t^2R_\sigma(T)=0.
\end{cases}
\end{equation}
We also prove that the remainder term satisfies
\begin{equation}
\|R_\sigma\|_{H^{k+1}(\Omega_T)}\lesssim\|L^*y_\sigma\|_{H^k(\Omega_T)}\lesssim\sigma^{-K}.
\end{equation}



\section{Linearization method and Alessandrini type integral identities}\label{sec:lin}
In this section, we use the classical linearization method to analyze the asymptotic expansion of solutions with respect to the small Dirichlet data. Some useful Alessandrini type integral identities for $\alpha,\lambda,q$ and $\xi$ will be also given by introducing the adjoint problem of $L_{\alpha,b,c,\lambda,q}u=0$ together with the simple integration by parts formula.
\subsection{Linearization of the nonlinear problem}
Let $f_1,f_2\in\mathcal A_0^s(\Gamma_T)$ and let
$\varepsilon_1,\varepsilon_2\in\mathbb R\setminus\{0\}$. We use
\begin{equation}
D_\varepsilon u_{\varepsilon_j f_j}
\vcentcolon=\frac{u_{\varepsilon_jf_j}}{\varepsilon_j},
\qquad D^2_{\varepsilon_1,\varepsilon_2}u
\vcentcolon=
\frac{
u_{\varepsilon_1f_1+\varepsilon_2f_2}
-u_{\varepsilon_1f_1}
-u_{\varepsilon_2f_2}
}{\varepsilon_1\varepsilon_2}
\end{equation}
to denote the first- and second-order finite difference of $u$, respectively.
Set
\[
f_\varepsilon
\vcentcolon=\varepsilon_1f_1+\varepsilon_2f_2,
\qquad
\rho_\varepsilon
\vcentcolon=
|\varepsilon_1|\|f_1\|_{H^{s+2}(\Gamma_T)}
+|\varepsilon_2|\|f_2\|_{H^{s+2}(\Gamma_T)}.
\]
For the general definition of higher order finite difference operator and its properties, we refer to \cite[Appendix C]{lassas2022uniqueness} (see also \cite{lai2024partial,lassas2025stability}) for more details.
We next apply the linearization method to analyze the asymptotic expansion of solutions to the following nonlinear problem
\begin{equation}\label{eqn:non-sys-vare-f}
\begin{cases}
L_{\alpha,b,c,\lambda,q}u=\partial_t^2(\xi u^2)  & {\rm in}\,\, \Omega_T,\\
u=f_\varepsilon\vcentcolon=\varepsilon_1f_1+\varepsilon_2f_2  & {\rm on}\,\, \Gamma_T,\\
u(0)=\partial_tu(0)=\partial_t^2u(0)=0 & {\rm in}\,\, \Omega.
\end{cases}
\end{equation}
\begin{lemma}\label{lem:asy-nonlin-u}
Let $0<\epsilon_0\le\delta_{\rm wp}$, where $\delta_{\rm wp}$ is the constant in
Proposition~\ref{prop:Westervelt} for $m=s$. Assume
$\rho_\varepsilon\le\epsilon_0$ and
$\alpha,b,c,\lambda,q,\xi\in\mathcal U_K(M_0)$, $b,c\ge m_0>0$, with some sufficiently large $K>0$. Then the solution
$u_{\varepsilon f}\in E^{s+2}(\Omega_T)$ of
\eqref{eqn:non-sys-vare-f} has the expansion
\begin{equation}
u_{\varepsilon f}=\varepsilon_1v_1+\varepsilon_2v_2+Q=\varepsilon_1v_1+\varepsilon_2v_2+(\varepsilon_1^2w_{20}+2\varepsilon_1\varepsilon_2w_{11}+\varepsilon_2^2w_{02})+R,
\end{equation}
where for each $j=1,2$, $v_j$ solves the linear problem
\begin{equation}\label{eqn:linear-1st-v}
\begin{cases}
L_{\alpha,b,c,\lambda,q}v_j=0  & {\rm in}\,\, \Omega_T,\\
v_j=f_j  & {\rm on}\,\, \Gamma_T,\\
v_j(0)=\partial_tv_j(0)=\partial_t^2v_j(0)=0 & {\rm in}\,\, \Omega,
\end{cases}
\end{equation}
and for $k,l\in \{0,1,2\}$ satisfying $k+l=2,$ $w_{kl}$ is the solution to
\begin{equation}\label{eqn:linear-w-v1v2}
\begin{cases}
L_{\alpha,b,c,\lambda,q}w_{kl}=\partial_t^2(\xi v_1^kv_2^l)  & {\rm in}\,\, \Omega_T,\\
w_{kl}=0  & {\rm on}\,\, \Gamma_T,\\
w_{kl}(0)=\partial_tw_{kl}(0)=\partial_t^2w_{kl}(0)=0 & {\rm in}\,\, \Omega.
\end{cases}
\end{equation}
Moreover, the remainder terms satisfy
\begin{equation}\label{est:first-exp-Q}
\|Q\|_{E^{s+2}(\Omega_T)}
+\|\partial_\nu Q\|_{\mathcal N_{s+1}(\Gamma_T)}
\lesssim\rho_\varepsilon^2,
\end{equation}
\begin{equation}\label{est:second-R}
\|R\|_{E^{s+2}(\Omega_T)}
+\|\partial_\nu R\|_{\mathcal N_{s+1}(\Gamma_T)}
\lesssim\rho_\varepsilon^3.
\end{equation}
\end{lemma}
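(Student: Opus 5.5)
The expansion follows from the well-posedness results of Section~\ref{sec:well-posedness}, in particular Proposition~\ref{prop:wellposed} and Proposition~\ref{prop:Westervelt}, together with the algebra property of $E^{s+2}(\Omega_T)$. Since $s+2>n+1$, the space $E^{s+2}(\Omega_T)$ is a Banach algebra. All source terms are estimated in $E^s(\Omega_T)$. Note that $\partial_t^2$ maps $E^{s+2}$ into $E^s$ boundedly, because $\partial_t^k\partial_t^2 F\in C([0,T];H^{s-k}(\Omega))$ for $F\in E^{s+2}$. Multiplication by the smooth coefficient $\xi\in\mathcal U_K(M_0)$ is also bounded on $E^{s+2}$. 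Hence
\[
\|\partial_t^2(\xi FG)\|_{E^s}\lesssim \|F\|_{E^{s+2}}\|G\|_{E^{s+2}}.
\]

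\emph{Step 1 (first order).} Since $\rho_\varepsilon\le\epsilon_0\le\delta_{\rm wp}$, Proposition~\ref{prop:Westervelt} gives a unique small solution $u=u_{\varepsilon f}$ with
\[
\|u\|_{E^{s+2}}+\|\partial_\nu u\|_{\mathcal N_{s+1}}\lesssim\|f_\varepsilon\|_{H^{s+2}}\le\rho_\varepsilon.
\]
For $f_j\in\mathcal A^s_0$ and zero source, the compatibility conditions reduce to $f_j\in\mathcal A_0^s$, so Proposition~\ref{prop:wellposed} yields $v_j$ with $\|v_j\|_{E^{s+2}}\lesssim\|f_j\|_{H^{s+2}}$. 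By linearity, $V:=\varepsilon_1v_1+\varepsilon_2v_2$ solves the linear problem with data $f_\varepsilon$, and $\|V\|_{E^{s+2}}\lesssim\rho_\varepsilon$. Set $Q:=u-V$. Then $Q$ has zero boundary and initial data and solves $LQ=\partial_t^2(\xi u^2)$. By \eqref{eq:est-high} and the algebra bound,
\[
\|Q\|_{E^{s+2}}+\|\partial_\nu Q\|_{\mathcal N_{s+1}}\lesssim\|u\|_{E^{s+2}}^2\lesssim\rho_\varepsilon^2,
\]
which is \eqref{est:first-exp-Q}.

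\emph{Step 2 (second order).} Each $w_{kl}$ exists by Proposition~\ref{prop:wellposed} with source $\partial_t^2(\xi v_1^kv_2^l)\in E^s$ and zero boundary data. Set $W:=\varepsilon_1^2w_{20}+2\varepsilon_1\varepsilon_2w_{11}+\varepsilon_2^2w_{02}$. Then $W$ solves $LW=\partial_t^2(\xi V^2)$ with zero data, and $R:=Q-W$ solves
\[
LR=\partial_t^2\bigl(\xi(u^2-V^2)\bigr)=\partial_t^2\bigl(\xi Q(u+V)\bigr),\qquad R|_{\Gamma_T}=0,
\]
with zero initial data. Applying \eqref{eq:est-high} once more gives
\[
\|R\|_{E^{s+2}}+\|\partial_\nu R\|_{\mathcal N_{s+1}}\lesssim\|Q\|_{E^{s+2}}\bigl(\|u\|_{E^{s+2}}+\|V\|_{E^{s+2}}\bigr)\lesssim\rho_\varepsilon^3,
\]
which is \eqref{est:second-R}.

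\emph{Main obstacle: compatibility for the inhomogeneous problems.} The delicate point is verifying the compatibility conditions \eqref{eq:compat-traces} for the problems with a source and zero boundary data, namely for $Q$, $w_{kl}$ and $R$. The boundary data vanish, so one must show that the jets $U_k$ vanish for $0\le k\le s+1$. This reduces to showing that $\partial_t^j u(0)=0$ and $\partial_t^jv_i(0)=0$ for $j\le s+1$; the sources are then products with vanishing time derivatives at $t=0$ up to the required order.

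For $v_i$, this vanishing follows inductively from the recursion \eqref{eq:compat-high} with $h=0$ and zero initial data, using the equation to express higher time derivatives. For $u$, the same induction works: at each stage $\partial_t^{k-3}\partial_t^2(\xi u^2)(0)$ involves only $\partial_t^{\le k-1}u(0)$, and these vanish by the induction hypothesis. The differences $Q$ and $R$ inherit the vanishing jets.

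Finally, the implicit constants depend only on $\Omega,T,s,m_0,M_0$, uniformly for $\rho_\varepsilon\le\epsilon_0$. This holds because the smallness radius $r_0$ in Proposition~\ref{prop:Westervelt} is fixed.
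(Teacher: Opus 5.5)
Your proposal is correct and follows the paper's proof essentially step by step. You bound $Q$ by applying \eqref{eq:est-high} to $LQ=\partial_t^2(\xi u^2)$, then bound $R$ through the factorization $u^2-V^2=Q(u+V)$, with compatibility guaranteed by the vanishing initial jets. The only cosmetic difference is that you derive the vanishing jets of $u$ and $v_j$ by induction from the equation itself, whereas the paper attributes them to the fixed-point construction of Proposition~\ref{prop:Westervelt}.
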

\begin{proof}
Since $f_1,f_2\in\mathcal A_0^s(\Gamma_T)$, we have $f_\varepsilon\in\mathcal A_0^s(\Gamma_T)$ satisfying $\|f_\varepsilon\|_{H^{s+2}(\Gamma_T)}
\le \rho_\varepsilon$. Thus, Proposition
\ref{prop:Westervelt}, with $m=s$, gives
\begin{equation}
\|u_{\varepsilon f}\|_{E^{s+2}(\Omega_T)}
\lesssim\rho_\varepsilon.
\end{equation}
The fixed-point construction in Proposition~\ref{prop:Westervelt}
shows that all initial jets of $u_{\varepsilon f}$ through order $s+2$ vanish and the same is true for $v_1$ and $v_2$.
Let $Q=u_{\varepsilon f}-\varepsilon_1v_1-\varepsilon_2v_2$. Noting that $v_j$ satisfy the linear equation \eqref{eqn:linear-1st-v}, we see that $Q$ satisfies
\begin{equation}
\begin{cases}
L_{\alpha,b,c,\lambda,q}Q=\partial_t^2(\xi u^2_{\varepsilon f}) & {\rm in}\,\, \Omega_T,\\
Q=0 & {\rm on}\,\, \Gamma_T,\\
Q(0)=\partial_tQ(0)=\partial_t^2Q(0)=0  & {\rm in}\,\, \Omega.
\end{cases}
\end{equation}
Consequently, $\partial_t^2(\xi u_{\varepsilon f}^2)\in E^s(\Omega_T)$
and its time derivatives through order $s-1$ vanish at $t=0$. Thus the
source and the zero boundary data satisfy the full compatibility
conditions. Applying the linear estimate \eqref{eq:est-high} yields
\begin{equation}
\begin{split}
\|Q\|_{E^{s+2}(\Omega_T)}
+\|\partial_\nu Q\|_{\mathcal N_{s+1}(\Gamma_T)}
&\lesssim
\|\partial_t^2(\xi u^2_{\varepsilon f})\|_{E^s(\Omega_T)}\\
&\lesssim
\|u_{\varepsilon f}\|^2_{E^{s+2}(\Omega_T)}
\lesssim\rho_\varepsilon^2.
\end{split}
\end{equation}

By equations \eqref{eqn:linear-1st-v} and \eqref{eqn:linear-w-v1v2}, we find that $R\vcentcolon=u_{\varepsilon f}-(\varepsilon_1v_1+\varepsilon_2v_2+\varepsilon_1^2w_{20}+2\varepsilon_1\varepsilon_2w_{11}+\varepsilon_2^2w_{02})$ satisfies
\begin{equation}
\begin{cases}
L_{\alpha,b,c,\lambda,q}R=\xi\partial_t^2[u_{\varepsilon f}^2-(\varepsilon_1v_1+\varepsilon_2v_2)^2] & {\rm in}\,\, \Omega_T,\\
R=0 & {\rm on}\,\,\Gamma_T,\\
R(0)=\partial_tR(0)=\partial_t^2R(0)=0 & {\rm in}\,\, \Omega.
\end{cases}
\end{equation}
Recalling that $u_{\varepsilon f},v_1,v_2\in E^{s+2}(\Omega_T)$ and $Q=u_{\varepsilon f}-\varepsilon_1v_1-\varepsilon_2v_2$, using the estimate \eqref{est:first-exp-Q}, we have 
\begin{equation}
\begin{split}
\|\xi\partial_t^2[u_{\varepsilon f}^2-(\varepsilon_1v_1+\varepsilon_2v_2)^2]\|_{E^s(\Omega_T)}\lesssim \|u_{\varepsilon f}+\varepsilon_1v_1+\varepsilon_2v_2\|_{E^{s+2}(\Omega_T)}\|Q\|_{E^{s+2}(\Omega_T)}\lesssim \rho_\varepsilon^3.
\end{split}
\end{equation}
The source in the $R$-equation has the same vanishing initial conditions, so its
zero boundary data satisfy the compatibility conditions as well. Hence Proposition \ref{prop:wellposed} now gives
$R\in E^{s+2}(\Omega_T)$ and the estimate \eqref{est:second-R}.
\end{proof}

\begin{remark}\label{rem:say-u-12}
In particular, if
$\varepsilon_1=\varepsilon_2=\varepsilon$ and $f_1=f_2=f$, then the uniqueness of solutions implies $v_1=v_2=v$ and $w_{20}=w_{11}=w_{02}=w$. Moreover, the expansion reads $u_{2\varepsilon f}=2\varepsilon v+4\varepsilon^2w+R$.

Apply Lemma~\ref{lem:asy-nonlin-u} with
$\varepsilon=\varepsilon_1$, $f_1=f$, and $f_2=0$, while keeping
$\varepsilon_2\ne0$. Then $v_2=w_{11}=w_{02}=0$, and we obtain
$u_{\varepsilon f}=\varepsilon v+\widetilde Q$, where
$\widetilde Q$ satisfies \eqref{est:first-exp-Q}. Hence 
\begin{equation}
D_{\varepsilon}u_{\varepsilon f}=v+\varepsilon^{-1}\widetilde Q,
\end{equation}
implying that
\begin{equation}
D_{\varepsilon}L_{\alpha,b,c,\lambda,q}u_{\varepsilon f}=L_{\alpha,b,c,\lambda,q}v+D_{\varepsilon}L_{\alpha,b,c,\lambda,q}\widetilde Q.
\end{equation}
Applying the one-parameter expansion separately in the directions
$f_1$ and $f_2$ gives
\begin{equation}
u_{\varepsilon_1f_1}
=\varepsilon_1v_1+\varepsilon_1^2w_{20}+R_1,
\qquad
u_{\varepsilon_2f_2}
=\varepsilon_2v_2+\varepsilon_2^2w_{02}+R_2.
\end{equation}
where $R_j$ is the remainder term of $u_{\varepsilon_jf_j},$ and $u_{\varepsilon_jf_j}$ is the unique solution to \eqref{eqn:non-sys-vare-f} with Dirichlet boundary data $\varepsilon_jf_j$ for each $j=1,2.$
Thus, we can rewrite
\begin{equation}
u_{\varepsilon_1f_1+\varepsilon_2f_2}
=u_{\varepsilon_1f_1}+u_{\varepsilon_2f_2}
+2\varepsilon_1\varepsilon_2w_{11}
+R-R_1-R_2.
\end{equation}
Let $\tilde R=R-R_1-R_2=\varepsilon_1\varepsilon_2D^2_{\varepsilon_1,\varepsilon_2}R$. Using the definition of the second order finite difference operator $D^2_{\varepsilon_1,\varepsilon_2}$ and the equation \eqref{eqn:linear-w-v1v2}, we have
\begin{equation}
D^2_{\varepsilon_1,\varepsilon_2}u
=2w_{11}+D^2_{\varepsilon_1,\varepsilon_2}R,
\end{equation}
yielding that
\begin{equation}\label{eqnpr}
\begin{split}
D^2_{\varepsilon_1,\varepsilon_2}
L_{\alpha,b,c,\lambda,q}u
&=2L_{\alpha,b,c,\lambda,q}w_{11}
+D^2_{\varepsilon_1,\varepsilon_2}
L_{\alpha,b,c,\lambda,q}R\\
&=2\partial_t^2(\xi v_1v_2)
+D^2_{\varepsilon_1,\varepsilon_2}
L_{\alpha,b,c,\lambda,q}R.
\end{split}
\end{equation}	
\end{remark}

\subsection{Alessandrini type identities}
For any sufficiently regular $z$, set
\begin{equation}\label{def:boundary-flux}
\mathcal Bz
\vcentcolon=
\left.(b\partial_\nu\partial_tz+c^2\partial_\nu z)\right|_{\Gamma_T}.
\end{equation}
For $j=1,2$, we consider the nonlinear problems related to the coefficients $\alpha_j,\lambda_j,q_j$ and $\xi_j$ with boundary Dirichlet data $\varepsilon f$, assuming $b$ and $c$ are known a priori. That is,
\begin{equation}
\begin{cases}
L_{\alpha_j,b,c,\lambda_j,q_j}u_j=\partial_t^2(\xi_j u_j^2) & {\rm in}\,\, \Omega_T,\\
u_j=\varepsilon f & {\rm on}\,\, \Gamma_T,\\
u_j(0)=\partial_tu_j(0)=\partial_t^2u_j(0)=0 & {\rm in}\,\, \Omega,
\end{cases}
\end{equation}
By Lemma \ref{lem:asy-nonlin-u} and Remark \ref{rem:say-u-12}, $u_j=u_{j,\varepsilon f}$ have the asymptotic expansion 
\begin{equation}
u_j=\varepsilon v_j+\widetilde Q_j,\quad j=1,2,
\end{equation}
where $v_j$ solve the equation $L_{\alpha_j,b,c,\lambda_j,q_j}v_j=0$ with the same Dirichlet data $f$, and the remainder terms $\widetilde Q_j$ satisfying the estimate \eqref{est:first-exp-Q} for $j=1,2$.
Let $v=v_1-v_2$, $\tilde\alpha=\alpha_2-\alpha_1$, $\tilde\lambda=\lambda_2-\lambda_1$, $\tilde q=q_2-q_1$.    
We find that
\begin{equation}\label{eqn:linear-1st-v1-v2}
\begin{cases}
L_{\alpha_1,b,c,\lambda_1,q_1}v=\tilde\alpha\partial_t^2v_2+\tilde\lambda\partial_tv_2+\tilde qv_2 & {\rm in}\,\, \Omega_T,\\
v=0 & {\rm on}\,\, \Gamma_T,\\
v(0)=\partial_tv(0)=\partial_t^2v(0)=0 & {\rm in}\,\, \Omega,
\end{cases}
\end{equation}
Since the initial jets of $v_2$ vanish through order $s+1$, the source in \eqref{eqn:linear-1st-v1-v2} belongs to $E^s(\Omega_T)$ and compatibility conditions hold.

For $h\in\mathcal A_T^s(\Gamma_T)$, we consider the adjoint problem
\begin{equation}\label{eqns:sys-backward-y}      
\begin{cases}
L_{\alpha_1,b,c,\lambda_1,q_1}^*y=0 & {\rm in}\,\, \Omega_T,\\
y=h & {\rm on}\,\, \Gamma_T,\\
y(T)=\partial_ty(T)=\partial_t^2y(T)=0 & {\rm in}\,\,\Omega,
\end{cases}
\end{equation}
Multiplying the first equation of \eqref{eqn:linear-1st-v1-v2} by $y$ and noting the initial and boundary conditions of the problems \eqref{eqn:linear-1st-v} and \eqref{eqns:sys-backward-y}, we can obtain the  Alessandrini type equality for $\tilde\alpha,\tilde\lambda,\tilde q$,
\begin{equation}\label{integral-identity-ab}
\begin{split}
I_{\tilde\alpha,\tilde\lambda,\tilde q}&\vcentcolon=\int_{\Omega_T}[-\tilde \alpha(\partial_tv_2)(\partial_ty)+\tilde\lambda(\partial_tv_2)y+\tilde qv_2y]dxdt\\
&=-\varepsilon^{-1}\int_{\Gamma_T}
[\Lambda_1(\varepsilon f)-\Lambda_2(\varepsilon f)]h\,d\Gamma dt+\varepsilon^{-1}\int_{\Gamma_T}
\mathcal B(\tilde Q_1-\tilde Q_2)h\,d\Gamma dt.
\end{split}
\end{equation}
Consequently,
\begin{equation}
\begin{split}
|I_{\tilde\alpha,\tilde\lambda,\tilde q}|&\lesssim |\varepsilon|^{-1}\bigl( \|\Lambda_1(\varepsilon f)-\Lambda_2(\varepsilon f)\|_{L^2(\Gamma_T)}+
\|\partial_\nu(\tilde Q_1-\tilde Q_2)\|_{H^1(0,T;L^2(\Gamma))}\bigr)
\|h\|_{L^2(\Gamma_T)}.
\end{split}
\end{equation}

We now consider the following nonlinear problem for $u_j=u_{j,\varepsilon_1f_1+\varepsilon_2f_2}$ with the Dirichlet data $\varepsilon_1f_1+\varepsilon_2f_2$ and the coefficients $\alpha_j,\lambda_j,q_j$ and $\xi_j$ for $j=1,2$,
\begin{equation}
\begin{cases}
L_{\alpha_j,b,c,\lambda_j,q_j}u_j=\partial_t^2(\xi_j u_j^2) & {\rm in}\,\, \Omega_T,\\
u_j=\varepsilon_1f_1+\varepsilon_2f_2 & {\rm on}\,\, \Gamma_T,\\
u_j(0)=\partial_tu_j(0)=\partial_t^2u_j(0)=0 & {\rm in}\,\, \Omega.
\end{cases}
\end{equation}
By Lemma \ref{lem:asy-nonlin-u}, we should study the problem \eqref{eqn:linear-w-v1v2} for $w_{j,11}$ where the coefficients $\alpha,\lambda,q,\xi$ are replaced by $\alpha_j,\lambda_j,q_j,\xi_j$ for $j=1,2$. We briefly write $w_j=w_{j,11}$ to have
\begin{equation}\label{sys:for-w-j}
\begin{cases}
L_{\alpha_j,b,c,\lambda_j,q_j}w_j=\partial_t^2(\xi_j v_{1,j}v_{2,j}) & {\rm in}\,\, \Omega_T,\\
w_j=0 & {\rm on}\,\, \Gamma_T,\\
w_j(0)=\partial_tw_j(0)=\partial_t^2w_j(0)=0 & {\rm in}\,\, \Omega,
\end{cases}
\end{equation}
where $v_{k,j}$ satisfy the equation $L_{\alpha_j,b,c,\lambda_j,q_j}v_{k,j}=0$ with respect to the Dirichlet data $f_k$ and $\alpha_j,\lambda_j,q_j$ for $j,k=1,2$. For simplicity, we choose $f=f_1=f_2$ in the subsequent analysis. Then by the uniqueness of solutions to the $v_{k,j}$-equation, we have $v_{1,j}=v_{2,j}\vcentcolon=v_j$ in $\Omega_T$ for $j=1,2$.
Let $w=w_1-w_2$, $\tilde\xi=\xi_2-\xi_1$. We have
\begin{equation}\label{eqn:sys-w-w1-w2}
\begin{cases}
L_{\alpha_1,b,c,\lambda_1,q_1}w=F(\tilde\alpha,\tilde\lambda,\tilde q,\tilde\xi) & {\rm in}\,\, \Omega_T,\\
w=0 & {\rm on}\,\, \Gamma_T,\\
w(0)=\partial_tw(0)=\partial_t^2w(0)=0 & {\rm in}\,\, \Omega,
\end{cases}
\end{equation}
where  
\begin{equation}
F(\tilde\alpha,\tilde\lambda,\tilde q,\tilde\xi)=\tilde\alpha\partial_t^2w_2+\tilde\lambda\partial_tw_2+\tilde qw_2-\tilde\xi\partial_t^2v_2^2+\xi_1\partial_t^2(v(v_1+v_2)).
\end{equation}
All solution factors in this expression have vanishing initial jets through the
orders supplied by Proposition~\ref{prop:wellposed}. Hence
$F(\widetilde\alpha,\widetilde\lambda,\widetilde q,\widetilde\xi)$
belongs to $E^s(\Omega_T)$ and satisfies the full zero-boundary
compatibility conditions for \eqref{eqn:sys-w-w1-w2}.

Let $\widetilde R_j$ denote the mixed remainder
$R_j(\varepsilon_1,\varepsilon_2)-R_j(\varepsilon_1,0)
-R_j(0,\varepsilon_2)$. Define
\[
D^2_{\varepsilon_1,\varepsilon_2}\Lambda_j\vcentcolon=\frac{\Lambda_j(\varepsilon_1f+\varepsilon_2f)-\Lambda_j(\varepsilon_1f)
-\Lambda_j(\varepsilon_2f)}{\varepsilon_1\varepsilon_2}.
\]
The expansion above gives the exact boundary relation
\begin{equation}\label{eq:boundary-second-difference}
\mathcal Bw=\mathcal B(w_1-w_2)=\frac12D^2_{\varepsilon_1,\varepsilon_2}(\Lambda_1-\Lambda_2)-\frac{1}{2\varepsilon_1\varepsilon_2}
\mathcal B(\widetilde R_1-\widetilde R_2).
\end{equation}
Multiplying the first equation of
\eqref{eqn:sys-w-w1-w2} by $y$ and integrating by parts gives
\begin{equation}\label{iden:xi-DtN}
\begin{split}
&2\int_{\Omega_T}\tilde\xi(\partial_ty)(\partial_tv_2)v_2\,dxdt\\
&=-\int_{\Gamma_T}\mathcal B(w)y\,d\Gamma dt
+\int_{\Omega_T}(\tilde\alpha\partial_tw_2\partial_ty-\tilde\lambda y\partial_tw_2-\tilde qw_2y)\,dxdt\\
&\quad-\int_{\Omega_T}y\xi_1[(v_1+v_2)\partial_t^2v+2(\partial_tv)\partial_t(v_1+v_2)+v\partial_t^2(v_1+v_2)]\,dxdt.
\end{split}
\end{equation}


\section{Proof of the main theorems}\label{sec-proof}
In this section, we give the proof of the main theorems.
\subsection{Stable determination of the linear coefficients $\alpha,\lambda$ and $q$}\label{Sec-proof-lin}

Since all coefficients are real-valued, the equation $L^*_{\alpha_1,b,c,\lambda_1,q_1}y=0$ implies
$L^*_{\alpha_1,b,c,\lambda_1,q_1}\bar y=0$. For a maximal null geodesic $\vartheta$ in $\mathcal M$, we respectively
construct Gaussian beam solutions for $L_{\alpha_2,b,c,\lambda_2,q_2}v_2=0$ and $L^*_{\alpha_1,b,c,\lambda_1,q_1}y=0$:
\begin{equation}
v_2=e^{i\sigma\varphi}a_\sigma+r_\sigma,
\qquad
f=e^{i\sigma\varphi}a_\sigma|_{\Gamma_T},
\end{equation}
and
\begin{equation}
y=e^{-i\sigma\bar\varphi}\bar\eta_\sigma+R_\sigma,
\qquad
h=e^{-i\sigma\bar\varphi}\bar\eta_\sigma|_{\Gamma_T}.
\end{equation}

Let $s_-=\sqrt2(t_--t_0)$ and $s_+=\sqrt2(t_+-t_0)$ be the Fermi-coordinate parameters of the entry and exit points of the
null geodesic through $\Omega_T$. The leading transport solutions are only determined up to nonzero multiplicative constants. We therefore normalize the leading amplitudes at $s=s_-$. Equivalently, in the formulas of Section~\ref{sec:Gaussian-beams}, the lower integration limit $a_0$ may be replaced by $s_-$. This merely
multiplies the Gaussian beams by uniformly bounded constants and does not influence any of their estimates. With this normalization, along the null geodesic,
\begin{equation}\label{eq:principal-amplitude-normalized}
\begin{split}
e_{0,0}^1(s)&=\widetilde b^{1/2}(s)(\det Y(s))^{-1/2}e^{-\frac{\sqrt2}{4}\int_{s_-}^{s}\Upsilon_2(\tau,0)\,d\tau},\\
\overline\theta_{0,0}^1(s)
&=\hat b^{-1/2}(s)(\overline{\det Y(s)})^{-1/2}e^{\frac{\sqrt2}{4}\int_{s_-}^{s}\Upsilon_1(\tau,0)\,d\tau},
\end{split}
\end{equation}
where $\widetilde b=b^{1-\frac n2}, \hat b=b^{1+\frac n2}.$
Since
$\Upsilon_2-\Upsilon_1=\alpha_2-\alpha_1=\tilde\alpha$
in $\Omega$, we have
\begin{equation}\label{eq:product-principal-amplitudes}
b^{n/2}e_{0,0}^1(s)\overline{\theta_{0,0}^1}(s)=|\det Y(s)|^{-1}e^{-\frac{\sqrt2}{4}\int_{s_-}^{s}
\tilde\alpha(\tau,0)\,d\tau}.
\end{equation}
Here we used $b^{n/2}\widetilde b^{1/2}\hat b^{-1/2}=1$.

We now insert the Gaussian beam solutions into \eqref{integral-identity-ab} to have
\begin{equation}\label{iden-alpha-lam-q-DtN}
\begin{split}
I_{\tilde\alpha,\tilde\lambda,\tilde q}\vcentcolon&=
I_{\tilde\alpha}+I_{\tilde\lambda}+I_{\tilde q}\\
&=-\int_{\Omega_T}\tilde\alpha\bigl[\bigl(\sigma^2|\varphi_t|^2a_\sigma\bar\eta_\sigma+i\sigma(\varphi_ta_\sigma\bar\eta_{\sigma t} -\bar\varphi_ta_{\sigma t}\bar\eta_\sigma)+a_{\sigma t}\bar\eta_{\sigma t}
\bigr)e^{-2\sigma\Im\varphi}\\
&\qquad\qquad\quad
+(i\sigma\varphi_ta_\sigma+a_{\sigma t})R_{\sigma t}e^{i\sigma\varphi}
+(-i\sigma\bar\varphi_t\bar\eta_\sigma+\bar\eta_{\sigma t})
r_{\sigma t}e^{-i\sigma\bar\varphi}+r_{\sigma t}R_{\sigma t}\bigr]dxdt\\
&\quad+\int_{\Omega_T}\tilde\lambda\bigl[(i\sigma\varphi_ta_\sigma\bar\eta_\sigma+a_{\sigma t}\bar\eta_\sigma)e^{-2\sigma\Im\varphi}+(i\sigma\varphi_ta_\sigma+a_{\sigma t})R_\sigma e^{i\sigma\varphi}\\
&\qquad\qquad\quad
+\bar\eta_\sigma r_{\sigma t}e^{-i\sigma\bar\varphi}+r_{\sigma t}R_\sigma\bigr]dxdt\\
&\quad+\int_{\Omega_T}\tilde q\bigl[a_\sigma\bar\eta_\sigma e^{-2\sigma\Im\varphi}+a_\sigma R_\sigma e^{i\sigma\varphi}
+\bar\eta_\sigma r_\sigma e^{-i\sigma\bar\varphi}+r_\sigma R_\sigma\bigr]dxdt.
\end{split}
\end{equation}
Let $e_0^1(s,z')$ and $\theta_0^1(s,z')$ denote the full leading amplitudes appearing in $a_\sigma$ and $\eta_\sigma$, and put
\begin{equation}\label{eq:full-leading-product-6-1}
A_0(s,z')\vcentcolon=e_0^1(s,z')\overline{\theta_0^1}(s,z').
\end{equation}
In particular, on $\vartheta$,
\[
A_0(s,0)=e_{0,0}^1(s)\overline{\theta_{0,0}^1}(s).
\]
Taking $N$, and hence $K$, sufficiently large, the estimate \eqref{est:remainder-r-sigma}, together with the corresponding estimate for $R_\sigma$ and $r_\sigma$, give
\begin{equation}\label{eq:remainder-orders-6-1}
\begin{split}
I_{\tilde\alpha}&=-\sigma^2\int_{\Omega_T}\tilde\alpha|\varphi_t|^2A_0(s,z')
e^{-2\sigma\Im\varphi}\,dxdt+\mathcal O(\sigma^{1-\frac n2}),\\
I_{\tilde\lambda}&=i\sigma\int_{\Omega_T}\tilde\lambda\varphi_tA_0(s,z')
e^{-2\sigma\Im\varphi}\,dxdt+\mathcal O(\sigma^{-\frac n2}),\\
I_{\tilde q}&=\int_{\Omega_T}\tilde qA_0(s,z')e^{-2\sigma\Im\varphi}\,dxdt+\mathcal O(\sigma^{-1-\frac n2}).
\end{split}
\end{equation}
Before using the estimates of the geodesic ray globally, we are required to make the dependence on the
chosen geodesic explicit. 

For $(p_0,\zeta)\in\partial_-S\Omega$, let
$\gamma_{p_0,\zeta}$ be the corresponding unit-speed maximal geodesic
and let $\tau(p_0,\zeta)$ be its exit time. We write
\[
p_+=\gamma_{p_0,\zeta}(\tau(p_0,\zeta)),\qquad\zeta_+=\dot\gamma_{p_0,\zeta}(\tau(p_0,\zeta)),
\]
so that $(p_+,\zeta_+)\in\partial_+S\Omega$.
Set
\begin{equation}\label{eq:incidence-angles}
\iota_-(p_0,\zeta)\vcentcolon=-\langle\zeta,\nu(p_0)\rangle_g,\qquad \iota_+(p_0,\zeta)\vcentcolon=
\langle\zeta_+,\nu(p_+)\rangle_g.
\end{equation}
For $0<\kappa<1$, define
\begin{equation}\label{eq:non-grazing-rays}
\mathcal M_\kappa\vcentcolon=\left\{(p_0,\zeta)\in\partial_-S\Omega:\iota_-(p_0,\zeta)\ge\kappa,\quad
\iota_+(p_0,\zeta)\ge\kappa\right\},
\end{equation}
and let
\[
\mathcal G_\kappa\vcentcolon=\partial_-S\Omega\setminus\mathcal M_\kappa.
\]

For each maximal geodesic $\gamma=\gamma_{p_0,\zeta}, (p_0,\zeta)\in\partial_-S\Omega,$
let $\vartheta_\gamma$ denote the associated null geodesic in the
extended spacetime. To emphasize the dependence on $\gamma$, we write $(U_\gamma,\Phi_\gamma)$
for the Fermi coordinate neighborhood associated with
$\vartheta_\gamma$ in Lemma~\ref{lem:Fermi-coord}. Thus
\[
\Phi_\gamma: U_\gamma\rightarrow \Phi_\gamma(U_\gamma)\subset\mathbb R^{1+n}
\]
is the corresponding Fermi coordinate map, and $(s,z')=\Phi_\gamma(t,x).$
We denote its inverse by $\mathcal F_\gamma\vcentcolon=\Phi_\gamma^{-1},$
so that
\[
\mathcal F_\gamma(s,z')=(t(s,z'),x(s,z')).
\]
In particular, along the central null geodesic,
\begin{equation}\label{eq:Fermi-center-gamma}
\Phi_\gamma(\vartheta_\gamma(t))
=
\bigl(\sqrt2(t-t_{0,\gamma}),0\bigr),
\qquad
t=t_{0,\gamma}+\frac{s-z_1}{\sqrt2}.
\end{equation}
Let $[s_{-,\gamma},s_{+,\gamma}]\vcentcolon=[\sqrt2(t_{-,\gamma}-t_{0,\gamma}),\sqrt2(t_{+,\gamma}-t_{0,\gamma})]$ be the longitudinal Fermi interval corresponding to the segment of $\vartheta_\gamma$ in $\overline\Omega_T$. We denote by $J_\gamma$ a prolonged longitudinal interval containing $[s_{-,\gamma},s_{+,\gamma}]$, obtained by extending the
spatial geodesic beyond both endpoints. Thus, in the notation of Section~\ref{sec:Gaussian-beams}, $J_\gamma$ is the
geodesic-dependent counterpart of the interval $[a_0,b_0]$.
For each null geodesic $\vartheta_\gamma$, let
$\chi_\gamma\in C_c^\infty(J_\gamma)$ be such that
\begin{equation}\label{cond-cut-off}
\chi_\gamma=1 \quad\text{on a neighborhood of the longitudinal interval corresponding to }\overline\Omega_T.
\end{equation}
The support of $\partial_s\chi_\gamma$ is chosen in the
prolonged end regions lying outside $\overline\Omega_T$.
We then replace the local Gaussian-beam ansatz by
\[
v_\sigma(s,z')=\chi_\gamma(s)\chi\Bigl(\frac{|z'|}{\delta}\Bigr)e^{i\sigma\varphi(s,z')}\sum_{k=0}^N\sigma^{-k}e_k(s,z').
\]
Since $\operatorname{supp}\partial_s\chi_\gamma\cap\overline\Omega_T=\emptyset,$ the longitudinal cut-off produces no additional residual term in $\Omega_T$.

We prove the following lemma.
\begin{lemma}\label{lem:uniform-ray-family}
Assume that $(\Omega,g)$ has strictly convex boundary and is non-trapping.
For the fixed extensions of $\Omega$, the coefficients, and the finite
approximation orders of the Gaussian beams employed in
Section~\ref{sec:Gaussian-beams}, there exist
\[
\kappa_0\in(0,1),\qquad
\ell_*>0,\qquad
\rho_*>0,\qquad
\epsilon_*>0,\qquad
\sigma_*>1,
\]
together with constants $C\ge1$, $c>0$, and $N_0\in\mathbb N$, depending
only on the \emph{a priori} data and the prescribed finite approximation
orders, such that the following assertions hold.

\medskip
\noindent
{\rm (i)}
For every $(p_0,\zeta)\in\partial_-S\Omega$, the Fermi coordinates of
Lemma~\ref{lem:Fermi-coord} may be chosen on a prolonged tubular
neighborhood of the form
\[
\mathcal T_\gamma(\rho_*)
=
\mathcal F_\gamma\bigl(J_\gamma\times B(0,\rho_*)\bigr),
\qquad
\mathcal F_\gamma=\Phi_\gamma^{-1},
\]
where the corresponding spatial geodesic is prolonged by the fixed
length $\ell_*$ beyond both physical endpoints. The tube may be chosen
so that
\begin{equation}\label{eq:uniform-tube-separation}
\inf_{(t,x)\in\mathcal T_\gamma(\rho_*)}
\min\{t,T-t\}
\ge
\epsilon_*.
\end{equation}
Moreover, there exists a cut-off function
$\chi_\gamma\in C_c^\infty(J_\gamma)$ satisfying
\eqref{cond-cut-off} and such that
\begin{equation}\label{eq:uniform-cutoff-separation}
\operatorname{dist}\!\left(
\mathcal F_\gamma\bigl(
(\operatorname{supp}\partial_s\chi_\gamma)
\times B(0,\rho_*)\bigr),
\overline\Omega_T
\right)
\ge
\epsilon_*.
\end{equation}

The tubes $\mathcal T_\gamma(\rho_*)$ admit coverings by at most $N_0$
Fermi coordinate charts of fixed longitudinal size. Let
$\mathcal J_{\gamma,j}$ denote the associated Fermi volume densities,
and let $\{\psi_{\gamma,j}\}$ be a partition of unity subordinate to
such a covering. For every derivative order $m$ occurring below,
\begin{equation}\label{eq:uniform-Fermi-bounds}
\sup_{(p_0,\zeta)\in\partial_-S\Omega}
\max_{1\le j\le N_0}
\bigl(
\|\Phi_{\gamma,j}\|_{C^m}
+
\|\Phi_{\gamma,j}^{-1}\|_{C^m}
+
\|\mathcal J_{\gamma,j}\|_{C^m}
+
\|\psi_{\gamma,j}\|_{C^m}
\bigr)
\le
C_m.
\end{equation}
Consequently, all constants arising in the estimates for the
finite-order Gaussian beams constructed in
Section~\ref{sec:Gaussian-beams} may be chosen uniformly with respect
to $(p_0,\zeta)$.

\medskip
\noindent
{\rm (ii)}
For every $0<\kappa<\kappa_0$, one has
\begin{equation}\label{eq:grazing-length}
\tau(p_0,\zeta)
\le
C\kappa,
\qquad
(p_0,\zeta)\in\mathcal G_\kappa,
\end{equation}
whereas
\begin{equation}\label{eq:good-ray-lower-length}
\tau(p_0,\zeta)
\ge
c\kappa,
\qquad
(p_0,\zeta)\in\mathcal M_\kappa.
\end{equation}
In addition,
\begin{equation}\label{eq:grazing-measure}
{\rm meas}_{\partial_-S\Omega}(\mathcal G_\kappa)
\le
C\kappa,
\end{equation}
where ${\rm meas}_{\partial_-S\Omega}$ denotes the natural Riemannian
measure on the boundary unit sphere bundle.

\medskip
\noindent
{\rm (iii)}
Let $(p_0,\zeta)\in\mathcal M_\kappa$, set
$\gamma=\gamma_{p_0,\zeta}$, and let
\[
\mathcal F_\gamma:
J_\gamma\times B(0,\rho_*)
\longrightarrow
\mathcal T_\gamma(\rho_*)
\]
be the inverse Fermi coordinate map introduced in {\rm (i)}. Let
$A_\gamma$ be any principal density arising from the leading integrals
in \eqref{eq:remainder-orders-6-1} after pullback by
$\mathcal F_\gamma$, including the Fermi volume density and the fixed
cutoffs. Its finite-order derivatives satisfy
\[
\sup_{(p_0,\zeta)\in\partial_-S\Omega}
\|A_\gamma\|_{C^m(J_\gamma\times B(0,\rho_*))}
\le
C_m.
\]
Define
\begin{equation}\label{eq:physical-G-def}
\mathfrak G_{\sigma,\gamma}
\vcentcolon=
\sigma^{n/2}
\int_{\mathcal F_\gamma^{-1}(\Omega_T)}
A_\gamma(s,z')
e^{-2\sigma\Im\varphi(s,z')}
\,ds\,dz',
\end{equation}
and
\begin{equation}\label{eq:lead-G-def}
\mathfrak G_{\gamma}^{\,\mathrm{lead}}
\vcentcolon=
\left(\frac{\pi}{2}\right)^{n/2}
\int_{s_-}^{s_+}
\frac{A_\gamma(s,0)}
{\sqrt{\det\Im H(s)}}
\,ds.
\end{equation}
If
\begin{equation}\label{eq:sigma-kappa-condition}
\sigma
\ge
\max\{\sigma_*,C\kappa^{-8}\},
\end{equation}
then
\begin{equation}\label{eq:uniform-SP-error}
\left|
\mathfrak G_{\sigma,\gamma}
-
\mathfrak G_{\gamma}^{\,\mathrm{lead}}
\right|
\le
C\bigl(
\sigma^{-1}
+
\kappa^{-1}\sigma^{-1/4}
\bigr),
\end{equation}
where $C$ is independent of $(p_0,\zeta)$, $\kappa$, and $\sigma$.
\end{lemma}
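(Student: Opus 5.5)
We prove the three assertions in order; each reduces to compactness of $\partial_-S\Omega$ together with smooth dependence of geodesics and Fermi data on $(p_0,\zeta)$.

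\emph{Part (i).} Fix the smooth extension $\widetilde\Omega$ and choose $\ell_*>0$ so small that every geodesic starting from $\partial_-S\Omega$, prolonged by $\ell_*$ backwards and forwards, stays inside $\widetilde\Omega$. This is possible because $\Gamma$ is strictly convex and $\overline\Omega$ is compact. Non-trapping together with strict convexity gives $\tau\le{\rm diam}_g\Omega<\infty$. Since $\tau$ is continuous on $\partial_-S\Omega$, the prolonged geodesics form a compact smooth family of curves of uniformly bounded length.

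The time parameter $t_{0,\gamma}$ is chosen so that the null lift satisfies $t_{-,\gamma}\ge 2\epsilon_*$ and $t_{+,\gamma}+\ell_*\le T-2\epsilon_*$. This is possible since $T>{\rm diam}_g\Omega+2\ell_*+4\epsilon_*$ once $\ell_*,\epsilon_*$ are small, which gives \eqref{eq:uniform-tube-separation}.

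The Fermi coordinates of Lemma~\ref{lem:Fermi-coord} are built from the exponential map along $\vartheta_\gamma$ using a parallel frame. These data depend smoothly on the initial data $(p_0,\zeta)$ in the compact set $\partial_-S\Omega$. Hence the inverse function theorem applies with uniform constants: covering $J_\gamma$ by at most $N_0$ intervals of fixed length, the injectivity radius $\rho_*$ and the $C^m$ bounds \eqref{eq:uniform-Fermi-bounds} are uniform. The partition of unity can be taken as a fixed one in $s$, transported to each tube.

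The cutoff $\chi_\gamma$ is chosen equal to $1$ on $[s_{-,\gamma}-\ell_*/3,\,s_{+,\gamma}+\ell_*/3]$, with derivative supported in the last third of each prolongation. The points $\gamma(s_\pm\pm\ell_*/2)$ lie at distance $\gtrsim\ell_*^2$ from $\overline\Omega$ by strict convexity. Shrinking $\rho_*$ then gives \eqref{eq:uniform-cutoff-separation}. Uniformity of the Gaussian beam constants follows because the Riccati solution $H$, the matrix $Y$, the lower bound on $\Im H$, and the transport ODE coefficients all solve ODEs with uniformly bounded coefficients on intervals of uniformly bounded length, starting from a fixed $H_0$.

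\emph{Part (ii).} We use boundary normal coordinates and the uniform positive second fundamental form. For a ray entering at angle $\iota_-$, the Taylor expansion of the distance to the boundary, $d(\gamma(t))=\iota_- t-\tfrac12\mathrm{II}(\zeta,\zeta)t^2+O(t^3)$, gives $\tau\simeq\iota_-$ uniformly for small angles. Reversibility gives the same statement for $\iota_+$, so $\iota_-\simeq\iota_+$ on short rays. From this, $\mathcal G_\kappa$ forces $\tau\le C\kappa$, and $\mathcal M_\kappa$ forces $\tau\ge c\kappa$; for rays that are not short, the lower bound on $\tau$ is immediate from compactness. The measure estimate \eqref{eq:grazing-measure} follows since $\mathcal G_\kappa$ is contained in $\{\iota_-\le C\kappa\}$, whose measure in each fibre hemisphere is $O(\kappa)$.

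\emph{Part (iii).} This is the main obstacle. The domain of integration $\mathcal F_\gamma^{-1}(\Omega_T)$ is not a product, and its lateral boundary cuts the tube near $s_\pm$ obliquely, at angle $\sim\iota_\pm$.

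Since $\Im\varphi\ge c|z'|^2$, truncating to $|z'|\le\sigma^{-1/2+\eta}$ costs only $O(\sigma^{-\infty})$. After the rescaling $z'=\sigma^{-1/2}w$ we write
\[
\mathfrak G_{\sigma,\gamma}=\int\!\!\int_{D_\sigma} A_\gamma(s,\sigma^{-1/2}w)\,e^{-2\sigma\Im\varphi(s,\sigma^{-1/2}w)}\,ds\,dw.
\]
On the product region $[s_-,s_+]\times\mathbb R^n$ we Taylor expand $A_\gamma$ and $\sigma\Im\varphi_3$. Odd terms vanish, so the Gaussian integral $\int e^{-2w\cdot\Im H w}\,dw=(\pi/2)^{n/2}(\det\Im H)^{-1/2}$ produces $\mathfrak G^{\rm lead}_\gamma$ with error $O(\sigma^{-1})$, uniformly by part (i).

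The symmetric difference between $D_\sigma$ and the product region lies within $s$-distance $\lesssim \kappa^{-1}|z'|\lesssim\kappa^{-1}\sigma^{-1/2+\eta}$ of $s_\pm$. This uses the fact that the boundary, written in Fermi coordinates, is a graph $s=s_\pm+O(|z'|/\iota_\pm)$ by the implicit function theorem. Making this graph representation valid requires $\iota_\pm\ge\kappa$ and $|z'|\ll\kappa$; the latter is guaranteed by $\sigma^{-1/2+\eta}\le\kappa^{2}$, a condition implied by \eqref{eq:sigma-kappa-condition} with margin. Since the integrand is bounded, this region contributes $O(\kappa^{-1}\sigma^{-1/2+\eta})$. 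Choosing $\eta=1/4$ gives \eqref{eq:uniform-SP-error}.

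The points I expect to be delicate are checking that the graph representation is uniform in $\gamma$, and that the $\kappa^{-8}$ threshold suffices for the exponent bookkeeping; the $\kappa^{-8}$ in the hypothesis suggests a less efficient but safe choice of truncation radius.
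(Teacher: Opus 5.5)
Your parts (i) and (ii) follow the paper in substance: compactness of the closed ray family, uniform Riccati and transport bounds from a fixed $H_0$, and $\tau\simeq\iota_\pm$ for nearly tangent rays from the second-order behaviour of the boundary distance along the ray.

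One repair is needed in (i). The inverse function theorem gives only uniform \emph{local} injectivity of $\mathcal F_\gamma$, and the spatial geodesic may self-intersect. Global injectivity on the whole tube comes from the time coordinate $t=t_0+(s-z_1)/\sqrt2$: two preimages of one point with small $z'$ must satisfy $|s-\tilde s|=|z_1-\tilde z_1|\to0$, so they lie in a single local chart.

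In (iii) you take a different route from the paper, and as written it has a gap.

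\textbf{The gap in (iii).} The implicit function theorem describes $\{(s,z'):r(x(s,z'))>0\}$ only in a box $|s-s_\pm|\le c\kappa$, $|z'|\le c\kappa^2$. The bound $\partial_s(r\circ x)\gtrsim\kappa$ persists only for $|s-s_\pm|\lesssim\kappa$, and the root moves by $O(|z'|/\kappa)$, so the condition is $|z'|\ll\kappa^2$, not $|z'|\ll\kappa$. Your claim that the symmetric difference with $[s_-,s_+]\times\R^n$ is confined to these two windows rests on two facts you never prove. They are exactly where strict convexity enters.

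(a) On $[s_-+c\kappa,\,s_+-c\kappa]$ the central ray has depth $\gtrsim\kappa^2$. The depth at $s_\pm\mp c\kappa$ is $\gtrsim\kappa^2$. Moreover, $r\circ\gamma$ has no local minimum in the collar: at a critical point $\dot\gamma$ is tangent to a level set of $r$, so $(r\circ\gamma)''\le-c_0$. Without this, nothing in your argument prevents a ray with $\iota_\pm\ge\kappa$ from brushing $\partial\Omega$ mid-flight, where the Gaussian would be cut with no control in terms of $\kappa$.

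(b) For the outward continuation $\gamma_{\rm out}$ of the ray past its endpoints, $\operatorname{dist}(\gamma_{\rm out}(u),\Omega)\ge c(\iota_\pm u+u^2)$ for $0\le u\le 2\ell_*$, uniformly down to grazing rays. The paper proves this by splitting into two cases: $\iota_\pm$ small, where $\nabla^2 r\le -c_0$ on nearly tangent vectors applies, and $\iota_\pm$ bounded below. Your (i) checks separation only at the points $\gamma(s_\pm\pm\ell_*/2)$. The cutoff estimate needs it on all of $\operatorname{supp}\partial_s\chi_\gamma$, and (iii) needs it for all $u\ge c\kappa$.

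Your argument closes once you add (a), (b), the bound $\tau\ge c\kappa$ from (ii) to separate the two windows, and $\sigma^{-1/4}\le c\kappa^2$. The last condition is exactly what $\sigma\ge C\kappa^{-8}$ provides; it is not a condition with margin. Finally, when integrating over the symmetric difference, use the Gaussian factor $e^{-c|w|^2}$, not mere boundedness of the integrand: over $|w|\le\sigma^{\eta}$, boundedness alone loses a factor $\sigma^{n\eta}$.

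\textbf{Comparison with the paper.} Once repaired, the two routes differ only at the endpoints. The paper never describes the lateral boundary. It removes layers of width $h=\kappa^{-1}\sigma^{-1/4}$ at $s_\pm$ and bounds them by their length, using only the depth $\ge c\kappa h=c\sigma^{-1/4}$ at $s_\pm\mp h$; this is where $h\le c\kappa$, i.e.\ $\sigma\ge C\kappa^{-8}$, enters. It handles the prolongations through the tail $e^{-c\sigma\kappa^2u^2}$. Your graph description costs a uniform implicit-function argument. In return, integrating the window width $\lesssim\kappa^{-1}\sigma^{-1/2}|w|$ against the Gaussian gives the sharper endpoint error $O(\kappa^{-1}\sigma^{-1/2})$.
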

The proof is given in the appendix of this paper.

A direct consequence of this lemma is the $L^2$ uniformity needed in the subsequent analysis. Suppose that
for a coefficient $F$ and some $D>0$ the fixed-ray argument gives, for
$(p_0,\zeta)\in\mathcal M_\kappa$,
\begin{equation}\label{eq:generic-good-ray}
|\mathcal I_{p_0,\zeta}(F)|
\lesssim
\delta^{1/D}
+
\kappa^{-1}\delta^{1/(4D)}.
\end{equation}
For the grazing family, the \emph{a priori} $L^\infty$ bound and
\eqref{eq:grazing-length} give
\[
|\mathcal I_{p_0,\zeta}(F)|
\lesssim\kappa,
\qquad
(p_0,\zeta)\in\mathcal G_\kappa.
\]
Together with \eqref{eq:grazing-measure}, this yields
\begin{equation}\label{eq:generic-L2-ray-est}
\|\mathcal I(F)\|_{L^2(\partial_-S\Omega)}
\lesssim
\delta^{1/D}
+
\kappa^{-1}\delta^{1/(4D)}
+
\kappa^{3/2}.
\end{equation}
Choosing $\kappa=\delta^{1/(10D)}$, we can obtain
\begin{equation}\label{eq:generic-L2-ray-holder}
\|\mathcal I(F)\|_{L^2(\partial_-S\Omega)}\lesssim \delta^{\frac{3}{20D}}.
\end{equation}
Since $\kappa=\delta^{1/(10D)}$ and $\sigma\sim\delta^{-1/D}$, the condition \eqref{eq:sigma-kappa-condition} is satisfied for all
sufficiently small $\delta$. After decreasing the common
$\delta_0>0$ once, the same choice also guarantees
$\sigma\ge\sigma_*$ from part~(i). Hence both the large-frequency
threshold and the small-data threshold are uniform over the complete
ray family.

We now carry out the transverse Gaussian concentration argument on
$\mathcal M_\kappa$. For each of the three principal moments in
\eqref{eq:remainder-orders-6-1}, we first apply
Lemma~\ref{lem:uniform-ray-family} to the complete integral, with the full transverse density $A_0(s,z')$ given in
\eqref{eq:full-leading-product-6-1}. Only once this concentration step has been performed do we restrict the leading amplitudes to the central ray and invoke \eqref{eq:product-principal-amplitudes}. Using
\[
dtdx=b^{n/2}J_{\bar g}(s,z')\,ds\,dz',
\qquad
J_{\bar g}(s,0)=1,
\]
together with \eqref{eq:product-principal-amplitudes} and
\eqref{iden-HY-matrix}, and recalling that
$|\varphi_t|^2=\frac12$ and
$\varphi_t=-\frac{\sqrt2}{2}$ on $\vartheta$, we obtain nonzero
coefficients $C_j=C_j(p_0,\zeta)$, $j=1,2,3$, satisfying
\begin{equation}\label{eq:uniform-leading-Cj}
0<c_*\le |C_j(p_0,\zeta)|\le C_*<\infty,
\qquad
(p_0,\zeta)\in\mathcal M_\kappa,
\end{equation}
where $c_*,C_*$ are independent of the chosen geodesic and of
$\kappa,\sigma,\delta$. Consequently, the three principal moments
satisfy
\begin{equation}\label{iden-int-I-alp}
1-e^{-\frac{\sqrt2}{4}\int_{s_-}^{s_+}\tilde\alpha(s,0)\,ds}=
C_1\sigma^{\frac n2-2}I_{\tilde\alpha}+\mathcal O\bigl(\sigma^{-1}+\kappa^{-1}\sigma^{-1/4}\bigr),
\end{equation}
\begin{equation}\label{eq:weighted-I-lambda}
\int_{s_-}^{s_+}\tilde\lambda(s,0)e^{-\frac{\sqrt2}{4}\int_{s_-}^{s}\tilde\alpha(\tau,0)\,d\tau}ds
=C_2\sigma^{\frac n2-1}I_{\tilde\lambda}+\mathcal O\bigl(\sigma^{-1}+\kappa^{-1}\sigma^{-1/4}\bigr),
\end{equation}
and
\begin{equation}\label{eq:weighted-I-q}
\int_{s_-}^{s_+}\tilde q(s,0)e^{-\frac{\sqrt2}{4}\int_{s_-}^{s}
\tilde\alpha(\tau,0)\,d\tau}ds=C_3\sigma^{\frac n2}I_{\tilde q}+\mathcal O\bigl(\sigma^{-1}+\kappa^{-1}\sigma^{-1/4}\bigr).
\end{equation}

To justify the uniform nondegeneracy in
\eqref{eq:uniform-leading-Cj}, observe first that, in each local Fermi
chart, the determinant identity
\[
\det(\Im H(s))\,|\det Y(s)|^2=\det(\Im H_0)
\]
cancels the potentially degenerating $Y$-factor arising in the leading transverse Gaussian integral. Moreover, transitions between local orthonormal Fermi frames are given by uniformly bounded linear transformations whose inverses are uniformly bounded as well. The local leading contributions are multiplied by $\chi_\mu(s)$, and summation over $\mu$, together with $\sum_\mu\chi_\mu(s)=1$, preserves the principal ray integral. Finally, the remaining metric and Jacobian normalization factors are smooth on a fixed compact set and are therefore uniformly bounded above and bounded away from zero. These observations establish \eqref{eq:uniform-leading-Cj}. The precise values of the coefficients $C_j$ are not essential for the subsequent stability argument.

We first observe that
\[
\int_{s_-}^{s_+}\tilde\alpha(s,0)e^{-\frac{\sqrt2}{4}\int_{s_-}^{s}\tilde\alpha(\tau,0)\,d\tau}ds=2\sqrt2
\left[1-e^{-\frac{\sqrt2}{4}\int_{s_-}^{s_+}\tilde\alpha(s,0)\,ds}\right].
\]
Indeed, this identity follows directly by differentiating the
exponential factor with respect to \(s\).

Along the central geodesic,
\[
s=\sqrt2(t-t_0),
\qquad
ds=\sqrt2\,dt.
\]
Consequently, up to fixed universal multiplicative constants, the
unweighted integrals with respect to the longitudinal variable \(s\)
coincide with the geodesic ray transforms \(\mathcal I_{p_0,\zeta}\) introduced in
Section~\ref{sec:geodesic-ray}. Since $\tilde\alpha$ is uniformly bounded and the lengths of all maximal geodesics are uniformly bounded, the elementary estimates for the exponential function yield
\begin{equation}\label{ineq:1-ex}
\Bigl|1-e^{-\frac{\sqrt2}{4}\int_{s_-}^{s}\tilde\alpha(\tau,0)\,d\tau}\Bigr|\lesssim \Bigl|
\int_{s_-}^{s}\tilde\alpha(\tau,0)\,d\tau\Bigr|,
\end{equation}
and, conversely,
\begin{equation}\label{eq:reverse-exp-alpha}
\Bigl|\int_{s_-}^{s_+}\tilde\alpha(s,0)\,ds\Bigr|\lesssim\Bigl|1-e^{-\frac{\sqrt2}{4}\int_{s_-}^{s_+}\tilde\alpha(s,0)\,ds}
\Bigr|.
\end{equation}

Before proceeding with the recovery of the coefficients, we record
three direct estimates that will be used in controlling the
lower-order terms arising in the successive subtraction arguments.
Using the definitions in \eqref{iden-alpha-lam-q-DtN},
the Cauchy-Schwarz inequality, the \emph{a priori}
\(L^\infty\)-bounds for the coefficient differences, and the
Gaussian-beam estimates
\[
\|\partial_t v_2\|_{L^2(\Omega_T)}+\|\partial_t y\|_{L^2(\Omega_T)}\lesssim\sigma^{1-\frac n4},
\qquad
\|v_2\|_{L^2(\Omega_T)}+\|y\|_{L^2(\Omega_T)}\lesssim\sigma^{-\frac n4},
\]
we obtain
\begin{equation}\label{eq:direct-I-component-bounds}
\begin{split}
|I_{\tilde\alpha}|\lesssim\sigma^{2-\frac n2}\|\tilde\alpha\|_{L^\infty(\Omega)},\quad
|I_{\tilde\lambda}|\lesssim\sigma^{1-\frac n2}\|\tilde\lambda\|_{L^\infty(\Omega)},\quad
|I_{\tilde q}|\lesssim\sigma^{-\frac n2}\|\tilde q\|_{L^\infty(\Omega)}.
\end{split}
\end{equation}
These bounds follow directly from the exact Gaussian-beam solutions
and therefore involve no additional error arising from the transverse
Gaussian concentration argument.

We next recover the three coefficients successively.

\medskip
\noindent\textbf{Stability estimate for $\tilde\alpha$.}
By \eqref{eq:direct-I-component-bounds} and the \emph{a priori} bounds
\[
|I_{\tilde\lambda}|\lesssim\sigma^{1-\frac n2},
\quad |I_{\tilde q}|\lesssim \sigma^{-\frac n2}.
\]
the relation
\[
I_{\tilde\alpha}=I_{\tilde\alpha,\tilde\lambda,\tilde q}-I_{\tilde\lambda}-I_{\tilde q},
\]
and \eqref{iden-int-I-alp}--\eqref{eq:reverse-exp-alpha}, we can obtain
\begin{equation}\label{ineq:exp-alpha-1}
|\mathcal I_{p_0,\zeta}(\tilde\alpha)|
\lesssim
\sigma^{\frac n2-2}
|I_{\tilde\alpha,\tilde\lambda,\tilde q}|
+
\sigma^{-1}
+
\kappa^{-1}\sigma^{-1/4}.
\end{equation}
The Gaussian-beam trace estimates, together with the analogous estimates for the adjoint beam, yield the following convenient
non-sharp bounds:
\begin{equation}\label{eq:boundary-GB-bounds-6-1}
\|h\|_{L^2(\Gamma_T)} \lesssim\sigma^{1/2},\quad\|f\|_{H^{s+2}(\Gamma_T)}+\|f\|_{\mathcal A^s(\Gamma_T)}\lesssim
\sigma^{s+\frac52}.
\end{equation}
Indeed, the estimate in \(H^{s+2}(\Gamma_T)\) follows from
\eqref{est:f-sigma-BT}. The remaining components of the
\(\mathcal A^s(\Gamma_T)\)-norm are controlled by the standard
time-trace embedding
\[
H^{s+2}(\Gamma_T)\hookrightarrow C^k([0,T];H^{s+\frac32-k}(\Gamma)),\quad 0\le k\le s+1.
\]
By \eqref{iden-alpha-lam-q-DtN}, \eqref{eq:direct-I-component-bounds} and \eqref{eq:boundary-GB-bounds-6-1}, we can get
\begin{equation}\label{eq:I-total-bound-6-1}
|I_{\tilde\alpha,\tilde\lambda,\tilde q}|\lesssim \sigma^{1/2}\bigl(\delta\varepsilon^{-1}
+\varepsilon\|f\|_{H^{s+2}(\Gamma_T)}^2\bigr).
\end{equation}
Substituting this estimate into \eqref{ineq:exp-alpha-1} gives
\begin{equation}\label{eq:alpha-ray-before-optimization}
|\mathcal I_{p_0,\zeta}(\tilde\alpha)|\lesssim\sigma^{\frac{n-3}{2}}\delta\varepsilon^{-1}+\varepsilon\sigma^{\frac{n+4s+7}{2}}+\sigma^{-1}+\kappa^{-1}\sigma^{-1/4}.
\end{equation}

We choose
\begin{equation}\label{eq:choice-alpha-parameters}
\sigma=c_1\delta^{-\frac1{n+2s+4}},\quad \varepsilon=c_2\delta^{\frac{n+4s+9}{2(n+2s+4)}},
\end{equation}
where \(c_1,c_2>0\) are fixed independently of \(\delta\). Then, for
\((p_0,\zeta)\in\mathcal M_\kappa\),
\begin{equation}\label{ineq:J-alpha}
|\mathcal I_{p_0,\zeta}(\tilde\alpha)|\lesssim \delta^{\frac1{n+2s+4}}+\kappa^{-1} \delta^{\frac1{4(n+2s+4)}}.
\end{equation}
Applying \eqref{eq:generic-L2-ray-holder} with
\(D=n+2s+4\), we obtain
\begin{equation}\label{eq:I-alpha-L2-ray}
\|\mathcal I(\tilde\alpha)\|_{L^2(\partial_-S\Omega)}
\lesssim
\delta^{\frac{3}{20(n+2s+4)}}.
\end{equation}
Moreover,
\[
\varepsilon\|f\|_{\mathcal A^s(\Gamma_T)}\lesssim\varepsilon\sigma^{s+\frac52}\lesssim\delta^{1/2}.
\]
Consequently, for sufficiently small \(\delta\), the Dirichlet boundary data \(\varepsilon f\in \mathcal U_{\Gamma_T}^{\epsilon_0,s}\).

We now apply the stability estimate for the geodesic X-ray transform. In the case $O=M$, \cite[Theorem 5.1]{bohr2021stability} applied to the scalar weight $W\equiv1$ gives an exponent $\mu\in(0,1)$, depending only on the fixed \emph{a priori} geometry, such that
\begin{equation}\label{eq:Bohr-Xray-L2}
\|F\|_{L^2(\Omega)} \lesssim \|F\|_{C^2(\overline\Omega)}^{1-\mu}\|\mathcal I(F)\|_{L^2(\partial_-S\Omega)}^{\mu}.
\end{equation}
The $L^2$ norm on the ray space in
\cite[Theorem 5.1]{bohr2021stability} is taken with respect to the natural Riemannian volume form on the corresponding subset of $\partial_+SM$, which is equivalent, up to orientation, to the ray-space norm used above.
We absorb the fixed grazing loss into the X-ray exponent by setting $\mathfrak m \vcentcolon=\frac{3}{20}\mu \in(0,1)$.
Since
$\|\tilde\alpha\|_{C^2(\overline\Omega)}\lesssim M_0$,
\eqref{eq:I-alpha-L2-ray} yields
\begin{equation}\label{est:alpha-stability-L2}
\|\tilde\alpha\|_{L^2(\Omega)}
\lesssim
\delta^{\frac{\mathfrak m}{n+2s+4}}.
\end{equation}
From this point on, $\mathfrak m$ denotes this fixed effective H\"older exponent. It depends only on the \emph{a priori} geometry and coefficient class, and not on $\delta$ or on an individual geodesic.

We need an auxiliary $L^\infty$ bound that is useful in the next step. Choose $l>\frac n2$ within the a priori regularity and $\theta_1\in(0,1)$ such that $(1-\theta_1)l>\frac n2.$ By interpolation and Sobolev embedding, one has
\begin{equation}\label{est:alpha-aux-Linf}
\|\tilde\alpha\|_{L^\infty(\Omega)} \lesssim \|\tilde\alpha\|_{L^2(\Omega)}^{\theta_1}
\|\tilde\alpha\|_{H^l(\Omega)}^{1-\theta_1} \lesssim \delta^{\frac{\mathfrak m\theta_1}{n+2s+4}}.
\end{equation}

\medskip
\noindent\textbf{Stability estimate for $\tilde\lambda$.}
By \eqref{eq:weighted-I-lambda} and \eqref{ineq:1-ex}, we have
\begin{equation}\label{ide-int-I-lam}
\begin{split}
\Bigl|\int_{s_-}^{s_+}\tilde\lambda(s,0)\,ds\Bigr|\lesssim\sigma^{\frac n2-1}|I_{\tilde\lambda}|+\int_{s_-}^{s_+}\Bigl|
\int_{s_-}^{s}\tilde\alpha(\tau,0)\,d\tau\Bigr|ds+\sigma^{-1}+\kappa^{-1}\sigma^{-1/4}.
\end{split}
\end{equation}
Using
\[
I_{\tilde\lambda}
=
I_{\tilde\alpha,\tilde\lambda,\tilde q}
-I_{\tilde\alpha}-I_{\tilde q},
\]
together with the estimates \eqref{eq:direct-I-component-bounds}, we obtain
\begin{equation}\label{ineq:tilde-lambda}
|\mathcal I_{p_0,\zeta}(\tilde\lambda)|
\lesssim
\sigma^{\frac n2-1}
|I_{\tilde\alpha,\tilde\lambda,\tilde q}|
+
\sigma\|\tilde\alpha\|_{L^\infty(\Omega)}
+
\sigma^{-1}+\kappa^{-1}\sigma^{-1/4}.
\end{equation}
Combining this with \eqref{eq:I-total-bound-6-1} gives
\begin{equation}\label{est:J-lambda}
|\mathcal I_{p_0,\zeta}(\tilde\lambda)|\lesssim\sigma^{\frac{n-1}{2}}\delta\varepsilon^{-1}+
\varepsilon\sigma^{\frac{n+4s+9}{2}}+\sigma\|\tilde\alpha\|_{L^\infty(\Omega)}+\sigma^{-1}+\kappa^{-1}\sigma^{-1/4}.
\end{equation}
Take
\begin{equation}\label{constant-k1}
k_1=(\mathfrak m\theta_1)^{-1}\left[8+(2-\mathfrak m\theta_1)(n+2s)\right],
\end{equation}
and choose
\begin{equation}\label{eq:choice-lambda-parameters}
\sigma=c_3\delta^{-\frac1{n+2s+k_1}},\quad\varepsilon=c_4\delta^{\frac{n+4s+k_1+5}{2(n+2s+k_1)}}.
\end{equation}
Then \eqref{est:alpha-aux-Linf} implies
\begin{equation}\label{eq:lambda-ray-optimized}
\begin{split}
|\mathcal I_{p_0,\zeta}(\tilde\lambda)|&\lesssim\delta^{\frac{k_1-4}{2(n+2s+k_1)}}+\delta^{\frac{\mathfrak m\theta_1}{n+2s+4}
-\frac1{n+2s+k_1}}+\delta^{\frac1{n+2s+k_1}}+\kappa^{-1}\delta^{\frac1{4(n+2s+k_1)}}\\
&\lesssim\delta^{\frac1{n+2s+k_1}}+\kappa^{-1}\delta^{\frac1{4(n+2s+k_1)}},\quad (p_0,\zeta)\in\mathcal M_\kappa.
\end{split}
\end{equation}
Indeed, \eqref{constant-k1} gives
\[
\frac{\mathfrak m\theta_1}{n+2s+4}-\frac1{n+2s+k_1}=\frac1{n+2s+k_1},
\]
and $k_1>6$. Furthermore,
\[
\varepsilon\|f\|_{\mathcal A^s(\Gamma_T)}\lesssim\delta^{1/2}.
\]
Applying \eqref{eq:generic-L2-ray-holder} with
$D=n+2s+k_1$ and then \eqref{eq:Bohr-Xray-L2} to
$\tilde\lambda$, we obtain
\begin{equation}\label{est:lambda-stability-L2}
\|\tilde\lambda\|_{L^2(\Omega)}
\lesssim
\delta^{\frac{\mathfrak m}{n+2s+k_1}}.
\end{equation}
Choosing $\theta_2\in(0,1)$ such that $(1-\theta_2)l>\frac n2$, again by interpolation, we can get
\begin{equation}\label{est:lambda-aux-Linf}
\|\tilde\lambda\|_{L^\infty(\Omega)}
\lesssim
\delta^{\frac{\mathfrak m\theta_2}{n+2s+k_1}}.
\end{equation}

\medskip
\noindent\textbf{Stability estimate for $\tilde q$.}
By \eqref{eq:weighted-I-q} and \eqref{ineq:1-ex},
\begin{equation}\label{ide-int-I-q}
\begin{split}
\Bigl|\int_{s_-}^{s_+}\tilde q(s,0)\,ds\Bigr|\lesssim\sigma^{\frac n2}|I_{\tilde q}|+\int_{s_-}^{s_+}
\Bigl|\int_{s_-}^{s}\tilde\alpha(\tau,0)\,d\tau\Bigr|ds+\sigma^{-1}+\kappa^{-1}\sigma^{-1/4}.
\end{split}
\end{equation}
Since
\[
I_{\tilde q}=I_{\tilde\alpha,\tilde\lambda,\tilde q}-I_{\tilde\alpha}-I_{\tilde\lambda},
\]
the estimates \eqref{eq:direct-I-component-bounds} imply
\begin{equation}\label{est:q-Iq}
|\mathcal I_{p_0,\zeta}(\tilde q)|\lesssim \sigma^{\frac n2}|I_{\tilde\alpha,\tilde\lambda,\tilde q}|+
\sigma^2\|\tilde\alpha\|_{L^\infty(\Omega)}+\sigma\|\tilde\lambda\|_{L^\infty(\Omega)}+\sigma^{-1}+\kappa^{-1}\sigma^{-1/4}.
\end{equation}
Using \eqref{eq:I-total-bound-6-1}, we therefore obtain
\begin{equation}\label{eq:q-ray-before-optimization}
\begin{split}
|\mathcal I_{p_0,\zeta}(\tilde q)|\lesssim\sigma^{\frac{n+1}{2}}\delta\varepsilon^{-1}+\varepsilon\sigma^{\frac{n+4s+11}{2}}
+\sigma^2\|\tilde\alpha\|_{L^\infty(\Omega)}+\sigma\|\tilde\lambda\|_{L^\infty(\Omega)}+\sigma^{-1}+\kappa^{-1}\sigma^{-1/4}.
\end{split}
\end{equation}
In particular, the first power is
$\sigma^{(n+1)/2}$, since
$\sigma^{n/2}$ in \eqref{est:q-Iq} is multiplied by the boundary
factor
$\|h\|_{L^2(\Gamma_T)}\lesssim\sigma^{1/2}$.

We set
\begin{equation}\label{constant-k2}
k_2=\max\left\{(\mathfrak m\theta_1)^{-1}\left[12+(3-\mathfrak m\theta_1)(n+2s)\right],\,(\mathfrak m\theta_2)^{-1}
\left[2k_1+(2-\mathfrak m\theta_2)(n+2s)\right]\right\},
\end{equation}
and choose
\begin{equation}\label{eq:choice-q-parameters}
\sigma=c_5\delta^{-\frac1{n+2s+k_2}},\quad\varepsilon=c_6\delta^{\frac{n+4s+k_2+5}{2(n+2s+k_2)}}.
\end{equation}
Then \eqref{est:alpha-aux-Linf} and
\eqref{est:lambda-aux-Linf} give
\begin{equation}\label{eq:q-ray-optimized}
\begin{split}
|\mathcal I_{p_0,\zeta}(\tilde q)|\lesssim&\delta^{\frac{k_2-6}{2(n+2s+k_2)}}+\delta^{\frac{\mathfrak m\theta_1}{n+2s+4}-\frac2{n+2s+k_2}}+\delta^{\frac{\mathfrak m\theta_2}{n+2s+k_1}-\frac1{n+2s+k_2}}+\delta^{\frac1{n+2s+k_2}}+\kappa^{-1}\delta^{\frac1{4(n+2s+k_2)}}\\
&\lesssim\delta^{\frac1{n+2s+k_2}}+\kappa^{-1}\delta^{\frac1{4(n+2s+k_2)}},
\quad
(p_0,\zeta)\in\mathcal M_\kappa.
\end{split}
\end{equation}
Indeed, the first quantity in the maximum
\eqref{constant-k2} gives
\[
\frac{\mathfrak m\theta_1}{n+2s+4}
-\frac2{n+2s+k_2}
\ge
\frac1{n+2s+k_2},
\]
and the second gives
\[
\frac{\mathfrak m\theta_2}{n+2s+k_1}
-\frac1{n+2s+k_2}
\ge
\frac1{n+2s+k_2}.
\]
Also $k_2>8$, so
\[
\frac{k_2-6}{2(n+2s+k_2)}
\ge
\frac1{n+2s+k_2}.
\]
Moreover,
\[
\varepsilon\|f\|_{\mathcal A^s(\Gamma_T)}\lesssim\delta^{1/2}.
\]
Applying \eqref{eq:generic-L2-ray-holder} with $D=n+2s+k_2$ and then \eqref{eq:Bohr-Xray-L2} yields
\begin{equation}\label{est:q-stability-L2}
\|\tilde q\|_{L^2(\Omega)}\lesssim\delta^{\frac{\mathfrak m}{n+2s+k_2}}.
\end{equation}

Finally, since $k_2>k_1>4$, the estimates \eqref{est:alpha-stability-L2}, \eqref{est:lambda-stability-L2}, and
\eqref{est:q-stability-L2} give the intermediate $L^2$ stability estimate
\begin{equation}\label{est:for-alp-lam-q-DN-L2}
\|\alpha_2-\alpha_1\|_{L^2(\Omega)}+\|\lambda_2-\lambda_1\|_{L^2(\Omega)}+\|q_2-q_1\|_{L^2(\Omega)}\lesssim \delta^{\frac{\mathfrak m}{n+2s+k_2}}.
\end{equation}

We are also required to establish the auxiliary $L^\infty$ estimate used in the subsequent recovery of the nonlinear coefficient. To this end, we choose
$\theta_3\in(0,1)$ such that $(1-\theta_3)l>\frac n2.$
By interpolation between \eqref{est:q-stability-L2} and the fixed \emph{a priori} $H^l(\Omega)$ bound, we can obtain
\begin{equation}\label{est:q-stability-L-inf}
\|\tilde q\|_{L^\infty(\Omega)}\lesssim\delta^{\frac{\mathfrak m\theta_3}{n+2s+k_2}}.
\end{equation}
Let $\theta_0\vcentcolon=\min\{\theta_1,\theta_2,\theta_3\}.$
Since $k_2>k_1>4$, the estimates \eqref{est:alpha-aux-Linf}, \eqref{est:lambda-aux-Linf}, and \eqref{est:q-stability-L-inf} imply
\begin{equation}\label{est:for-alp-lam-q-DN}
\|\alpha_2-\alpha_1\|_{L^\infty(\Omega)}+\|\lambda_2-\lambda_1\|_{L^\infty(\Omega)}+\|q_2-q_1\|_{L^\infty(\Omega)}
\lesssim \delta^{\mu_0},
\end{equation}
where
\begin{equation}\label{eq:mu0-linear-61}
\mu_0\vcentcolon=\frac{\mathfrak m\theta_0}{n+2s+k_2}\in(0,1).
\end{equation}
The exponent and the implicit constant depend only on the fixed \emph{a priori} data. Therefore, the proof of Theorem~\ref{thm:stable-alp-lam-q} is complete. \hfill $\square$

\subsection{Stable determination of the nonlinear coefficient $\xi$}
In this section, we focus on giving the proof of the stable determination of the nonlinear coefficient $\xi$. Instead of using Gaussian beam solutions in the geometric setting, we apply suitable GO solutions under the condition that $b$ is a known positive constant.

\subsubsection{Construction of GO solutions}
We set
\begin{equation}
\phi_\omega(t,x)
\vcentcolon=
t+\frac{x\cdot\omega}{\sqrt b},
\qquad
X_\omega
=
\partial_t-\sqrt b\,\omega\cdot\nabla,
\qquad
\omega\in\mathbb S^{n-1}.
\end{equation}
Recall the notion $\Upsilon_2=\alpha_2-\frac{c^2}{b}.$ A direct computation gives
\begin{equation}\label{eqn:compu-L-e-phi}
e^{-i\sigma\phi_\omega}
L_{\alpha_2,b,c,\lambda_2,q_2}
e^{i\sigma\phi_\omega}
=
\sigma^2P_2+i\sigma P_1
+L_{\alpha_2,b,c,\lambda_2,q_2},
\end{equation}
where
\begin{equation}
\begin{split}
P_2&\vcentcolon=-2X_\omega-\Upsilon_2,\\
P_1&\vcentcolon=3\partial_t^2+2\alpha_2\partial_t-b\Delta-2\sqrt b\,\omega\cdot\nabla\partial_t-\frac{2c^2}{\sqrt b}\omega\cdot\nabla+\lambda_2.
\end{split}
\end{equation}
For $N\in\mathbb N_{\ge1}$, we construct GO solutions of
$L_{\alpha_2,b,c,\lambda_2,q_2}v_2=0$ of the form
\begin{equation}
v_2(t,x)=e^{i\sigma\phi_\omega(t,x)}a_\sigma(t,x)+r_\sigma(t,x),\quad a_\sigma=a_0+\sigma^{-1}a_1+\cdots+\sigma^{-N}a_N.
\end{equation}
We use the fixed smooth extensions introduced above and keep the same
notation for the extended coefficients. In particular, when
$\tilde\alpha$ occurs outside $\Omega$, it denotes the difference of
the two fixed smooth extensions; no zero extension of
$\tilde\alpha$ is used.

The leading transport equation is
\begin{equation}
X_\omega a_0+\frac{\Upsilon_2}{2}a_0=0.
\end{equation}
We choose $ \Psi\in C_c^\infty(\mathbb R^n)$ such that $\int_{\mathbb R^n}\Psi^3\,dx=1$.
Thus we can take
\begin{equation}\label{eq:a0-GO-xi}
a_0(t,x)=\Psi_\ell(x+\sqrt bt\,\omega-p)e^{-\frac12\int_0^t\Upsilon_2(x+\sqrt b(t-\tau)\omega)\,d\tau},
\end{equation}
where $\Psi_\ell(z)\vcentcolon=\ell^{-\frac n3}\Psi\left(\frac z\ell\right)$.
The subsequent amplitudes satisfy
\begin{equation}\label{eq:recur-a-k-to-N}
X_\omega a_k+\frac{\Upsilon_2}{2}a_k=\frac{i}{2}P_1a_{k-1}+\frac12L_{\alpha_2,b,c,\lambda_2,q_2}a_{k-2},\quad
1\le k\le N,
\end{equation}
with the convention that $a_{-1}=0$. In particular, for $k=1$ we can construct
\begin{equation}
\begin{split}
a_1(t,x)&=\Psi_\ell(x+\sqrt bt\,\omega-p)e^{-\frac12\int_0^t\Upsilon_2(x+\sqrt b(t-\tau)\omega)\,d\tau}\\
&\quad+\frac{i}{2}\int_0^t(P_1a_0)(\tau,x+\sqrt b(t-\tau)\omega)e^{-\frac12\int_\tau^t\Upsilon_2(x+\sqrt b(t-s)\omega)\,ds}
d\tau.
\end{split}
\end{equation}
The remaining amplitude functions $a_k$, $2\le k\le N$, can be obtained recursively by solving the ODEs \eqref{eq:recur-a-k-to-N}.

Let
\[
\partial_-S_E\Omega\vcentcolon=\left\{(x,\theta)\in\partial\Omega\times\mathbb S^{n-1}:
\theta\cdot\nu(x)<0\right\}.
\]
We fix $(x_-,\theta)\in\partial_-S_E\Omega$, where $\theta$ denotes the direction of propagation through $\Omega$.
Let $\tau_E(x_-,\theta)$ be the length of the line issued from $x_-$ in the direction $\theta$. We set
\[
\omega=-\theta,\quad t_-=\frac12\Bigl(T-\frac{\tau_E(x_-,\theta)}{\sqrt b}\Bigr),\quad
t_+=\frac12\Bigl(T+\frac{\tau_E(x_-,\theta)}{\sqrt b}\Bigr),
\]
and choose
\begin{equation}\label{eq:p-choice-xi}
p=x_-+\sqrt b\,t_-\,\omega=x_- -\sqrt b\,t_-\,\theta.
\end{equation}
With this choice, we know that
\[
c_{x_-,\theta}(t)
\vcentcolon=
p-\sqrt bt\,\omega
=
p+\sqrt bt\,\theta,
\]
satisfies $c_{x_-,\theta}(t_-)=x_-$ and $c_{x_-,\theta}(t_+)=x_+,$ where $x_+$ denotes the exit point. Since $\tau_E(x_-,\theta)\le {\rm diam}\,\Omega,$ we can obtain
\begin{equation}\label{eq:uniform-time-margin-GO-xi}
t_-\ge\frac{\sqrt b\,T-{\rm diam}\,\Omega}{2\sqrt b},\quad T-t_+\ge\frac{\sqrt b\,T-{\rm diam}\,\Omega}{2\sqrt b}.
\end{equation}

Lemma~\ref{lem:uniform-GO-xi} below establishes that $c_{x_-,\theta}(0)$ and $c_{x_-,\theta}(T)$ remain uniformly
separated from $\overline\Omega$. In particular, there exists $\ell_*>0$, independent of $(x_-,\theta)$, such that, for every
$0<\ell\le\ell_*$,
\begin{equation}\label{eq:uniform-endpoint-support-GO-xi}
\supp\Psi_\ell(\,\cdot-p\,)\cap\overline\Omega=\varnothing,
\qquad
\supp\Psi_\ell(\,\cdot+\sqrt bT\,\omega-p\,)
\cap\overline\Omega=\varnothing.
\end{equation}
Consequently, the required initial compatibility conditions for the
forward GO solution, as well as the corresponding terminal
compatibility conditions for the adjoint construction, are satisfied
uniformly over the entire incoming ray family.

Using \eqref{eqn:compu-L-e-phi} and
\eqref{eq:recur-a-k-to-N}, the remainder satisfies
\begin{equation}\label{eq:sys-remain-GO}
\begin{cases}
L_{\alpha_2,b,c,\lambda_2,q_2}r_\sigma
=-e^{i\sigma t}F_\sigma
&\text{in }\Omega_T,\\
r_\sigma=0
&\text{on }\Gamma_T,\\
r_\sigma(0)=\partial_tr_\sigma(0)
=\partial_t^2r_\sigma(0)=0
&\text{in }\Omega,
\end{cases}
\end{equation}
where
\begin{equation}
\begin{split}
F_\sigma=e^{i\sigma\frac{x\cdot\omega}{\sqrt b}}\bigl[\sigma^{1-N}\big(iP_1a_N+L_{\alpha_2,b,c,\lambda_2,q_2}a_{N-1}
\big)+\sigma^{-N}L_{\alpha_2,b,c,\lambda_2,q_2}a_N\bigr].
\end{split}
\end{equation}
The higher-order compatibility conditions follow from the support choice above. By the energy estimate for
\eqref{eq:sys-remain-GO}, for every integer $s\ge2$ within the available coefficient regularity,
\begin{equation}\label{est:remainder-r-sig-GO}
\|r_\sigma\|_{E^{s+2}(\Omega_T)}
\lesssim
\sigma^{s+1-N}
\|\Psi_\ell\|_{H^{2N+s+2}(\mathbb R^n)}
\lesssim
\sigma^{s+1-N}
\ell^{-2N-s-2+\frac n6}.
\end{equation}

Similarly, for the adjoint equation
$L^*_{\alpha_1,b,c,\lambda_1,q_1}y=0$ we can construct
\begin{equation}
y(t,x)=e^{-2i\sigma\phi_\omega(t,x)}d_\sigma(t,x)+R_\sigma(t,x),\quad
d_\sigma=d_0+(2\sigma)^{-1}d_1+\cdots+(2\sigma)^{-N}d_N.
\end{equation}
The leading amplitude satisfies
\begin{equation}
X_\omega d_0-\frac{\Upsilon_1}{2}d_0=0.
\end{equation}
We can choose
\begin{equation}\label{eq:d0-GO-xi}
d_0(t,x)=\Psi_\ell(x+\sqrt bt\,\omega-p)e^{\frac12\int_0^t\Upsilon_1(x+\sqrt b(t-\tau)\omega)\,d\tau}.
\end{equation}
The amplitudes $d_k$, $1\le k\le N$, are obtained from the corresponding inhomogeneous transport equations. The transport recursion preserves the translating support variable $x+\sqrt bt\,\omega-p$. Hence, with the uniform choice
$0<\ell\le\ell_*$ in \eqref{eq:uniform-endpoint-support-GO-xi}, all terminal conditions required
for the backward problem vanish in $\Omega$. The remainder $R_\sigma$ satisfies
\[
R_\sigma(T)=\partial_tR_\sigma(T)=\partial_t^2R_\sigma(T)=0 \quad\text{in }\Omega
\]
and obeys the same type of estimate as \eqref{est:remainder-r-sig-GO}, with an implicit constant independent of $\sigma$ and $\ell$. We next give the proof of Theorem \ref{thm:stable-alp-lam-q-xi}.
\medskip

\subsubsection{Proof of Theorem \ref{thm:stable-alp-lam-q-xi}}

We use the integer $s$ fixed in the formulation of the inverse
problem. In particular, $s>n+2$, so that all Sobolev embeddings invoked
below are valid. Accordingly, we fix $N=s+1.$
We set
\begin{equation}
f=e^{i\sigma\phi_\omega}a_\sigma\big|_{\Gamma_T},\quad h=e^{-2i\sigma\phi_\omega}d_\sigma\big|_{\Gamma_T}.
\end{equation}
Let
\begin{equation}
I_{\tilde\xi}\vcentcolon=\int_{\Omega_T}\tilde\xi(\partial_t y)(\partial_t v_2)v_2\,dx\,dt.
\end{equation}
Since $\partial_t\phi_{\omega}=1$, the principal contribution to the product is
\[
(\partial_t y)(\partial_t v_2)v_2=2\sigma^2a_0^2d_0+\text{lower-order terms}.
\]
We define
\begin{equation}\label{eq:def-J-ell}
J_\ell
\vcentcolon=
\int_{\Omega_T}
\tilde\xi
\Psi_\ell^3(x+\sqrt bt\,\omega-p)
e^{\tilde h(t,x)}
\,dx\,dt,
\end{equation}
where
\begin{equation}
\tilde h(t,x)
\vcentcolon=
-\frac12\int_0^t
\tilde\alpha(x+\sqrt b(t-\tau)\omega)\,d\tau
-\frac12\int_0^t
\Upsilon_2(x+\sqrt b(t-\tau)\omega)\,d\tau.
\end{equation}
By the definitions of $a_0$ and $d_0$, one has
\[
J_\ell
=
\int_{\Omega_T}
\tilde\xi a_0^2d_0\,dx\,dt.
\]

We next estimate the error generated by substituting the GO solutions
into $I_{\tilde\xi}$. The transport recursion yields, for
$|\beta|\le2$ and $0\le k\le N$,
\[
\|\partial_{t,x}^{\beta}a_k\|_{L^\infty(\Omega_T)}
+
\|\partial_{t,x}^{\beta}d_k\|_{L^\infty(\Omega_T)}
\lesssim
\ell^{-\frac n3-|\beta|-2k}.
\]
We work under the parameter condition
\begin{equation}\label{eq:sigma-ell-regime}
\sigma\ge \ell^{-(6N+6)}.
\end{equation}
In particular, $\sigma^{-1}\ell^{-2}\le1,$ and hence the full amplitude expansions satisfy
\[
\|\partial_{t,x}^{\beta}a_\sigma\|_{L^\infty(\Omega_T)}+\|\partial_{t,x}^{\beta}d_\sigma\|_{L^\infty(\Omega_T)}
\lesssim \ell^{-\frac n3-|\beta|},\quad |\beta|\le2.
\]
Since $N=s+1$, estimate \eqref{est:remainder-r-sig-GO} gives
\[
\|r_\sigma\|_{E^{s+2}(\Omega_T)}
+
\|R_\sigma\|_{E^{s+2}(\Omega_T)}
\lesssim
\ell^{-3s-4+\frac n6}.
\]

We now expand $(\partial_t y)(\partial_t v_2)v_2$. The contribution containing the three principal oscillatory factors is precisely $2\sigma^2a_0^2d_0.$ Every remaining term that contains neither $r_\sigma$ nor
$R_\sigma$ is of strictly lower order in $\sigma$. Consequently, after normalization by $\sigma^2$, the sum of all such contributions is bounded by
\[
C\sigma^{-1}\ell^{-(6N+5)}.
\]
For the terms containing remainders, the same estimate can also be achieved. Hence,
for $\sigma>1$ and $0<\ell<1$, it holds that
\begin{equation}\label{est:J-ell-I-xi}
|J_\ell|\lesssim\sigma^{-2}|I_{\tilde\xi}|+\sigma^{-1}\ell^{-(6N+5)}.
\end{equation}
The choice of $\sigma$ and $\ell$ made below satisfies
\eqref{eq:sigma-ell-regime}, up to a fixed multiplicative constant.

We next estimate the boundary term in \eqref{iden:xi-DtN}. We take $\varepsilon_1=\varepsilon_2=\varepsilon$ and $f_1=f_2=f.$
By \eqref{est:second-R} and \eqref{eq:boundary-second-difference}, we can also obtain
\begin{equation}\label{est:Jbv-xi}
\begin{split}
\Bigl|\int_{\Gamma_T}\mathcal B(w)h\,d\Gamma\,dt\Bigr|\lesssim\varepsilon^{-2}\bigl(\delta+\|\varepsilon f\|_{H^{s+2}(\Gamma_T)}^3\bigr)\|h\|_{L^2(\Gamma_T)}.
\end{split}
\end{equation}
It follows from \eqref{iden:xi-DtN} and
\eqref{est:Jbv-xi} that
\begin{equation}\label{est-mid-J-xi}
\begin{split}
|I_{\tilde\xi}|&\lesssim\bigl(\varepsilon^{-2}\delta+\varepsilon\|f\|_{H^{s+2}(\Gamma_T)}^3\bigr)\|h\|_{L^2(\Gamma_T)}+\|\tilde\alpha\|_{L^\infty(\Omega)}
\|\partial_tw_2\|_{L^2(\Omega_T)}
\|\partial_ty\|_{L^2(\Omega_T)}\\
&\quad+\|\tilde\lambda\|_{L^\infty(\Omega)}\|\partial_tw_2\|_{L^2(\Omega_T)}\|y\|_{L^2(\Omega_T)}+\|\tilde q\|_{L^\infty(\Omega)}\|w_2\|_{L^2(\Omega_T)}\|y\|_{L^2(\Omega_T)}\\
&\quad+\|\xi_1\|_{L^\infty(\Omega)}\|y\|_{L^\infty(\Omega_T)}\sum_{j=1}^2\|v_j\|_{H^2(0,T;L^2(\Omega))}
\|v\|_{H^2(0,T;L^2(\Omega))}.
\end{split}
\end{equation}
The boundary data satisfy the rough estimates
\begin{equation}\label{eq:GO-boundary-xi}
\|f\|_{H^k(\Gamma_T)}+\|h\|_{H^k(\Gamma_T)}\lesssim\sigma^{k+\frac12}\ell^{-k-1-2N+\frac n6}, \quad k\ge0.
\end{equation}
Moreover, we have
\begin{equation}\label{eq:A-s-GO-xi}
\|f\|_{\mathcal A^s(\Gamma_T)}\lesssim\sigma^{s+\frac52}\ell^{-s-3-2N+\frac n6}.
\end{equation}
By the energy estimates for the forward and backward linear problems, we can get
\begin{equation}
\|\partial_ty\|_{L^2(\Omega_T)}+\|y\|_{L^2(\Omega_T)}\lesssim\sigma^{\frac52}\ell^{-3-2N+\frac n6},
\end{equation}
\begin{equation}
\|v_j\|_{H^2(0,T;L^2(\Omega))}\lesssim\sigma^{\frac52}\ell^{-3-2N+\frac n6},\quad j=1,2.
\end{equation}
Denote
\begin{equation}
\mathcal N(\tilde\alpha,\tilde\lambda,\tilde q)\vcentcolon=\|\tilde\alpha\|_{L^\infty(\Omega)}+\|\tilde\lambda\|_{L^\infty(\Omega)}+
\|\tilde q\|_{L^\infty(\Omega)}.
\end{equation}
Furthermore, from \eqref{eqn:linear-1st-v1-v2},
\begin{equation}
\begin{split}
\|v\|_{H^2(0,T;L^2(\Omega))}\lesssim\|\tilde\alpha\partial_t^2v_2+\tilde\lambda\partial_tv_2+\tilde qv_2\|_{L^2(\Omega_T)}
\lesssim\sigma^{\frac52}\ell^{-3-2N+\frac n6}\mathcal N(\tilde\alpha,\tilde\lambda,\tilde q),
\end{split}
\end{equation}

For $w_2$, we recall the equation
\[
L_{\alpha_2,b,c,\lambda_2,q_2}w_2=\partial_t^2(\xi_2v_2^2).
\]
Consequently,
\begin{equation}
\begin{split}
\|w_2\|_{L^2(\Omega_T)}+\|\partial_tw_2\|_{L^2(\Omega_T)}\lesssim
\|\partial_t^2(\xi_2v_2^2)\|_{L^2(\Omega_T)}\lesssim
\|v_2\|_{H^{n+1}(\Omega_T)}\|v_2\|_{H^2(0,T;L^2(\Omega))}.
\end{split}
\end{equation}
Since $n\ge3$, $n+1>1+\frac{n+1}{2},$ and hence $H^{n+1}(\Omega_T)\hookrightarrow W^{1,\infty}(\Omega_T).$
This is the embedding used to control both $v_2$ and
$\partial_tv_2$ in the product
$\partial_t^2(\xi_2v_2^2)$. 
The boundary estimates give
\begin{equation}
\|y\|_{L^\infty(\Omega_T)}\lesssim\|y\|_{H^n(\Omega_T)}\lesssim\sigma^{n+\frac12}\ell^{-\frac{5n}{6}-1-2N},
\end{equation}
\begin{equation}
\|v_1\|_{H^{n+1}(\Omega_T)}+\|v_2\|_{H^{n+1}(\Omega_T)}\lesssim\sigma^{n+\frac32}\ell^{-\frac{5n}{6}-2-2N}.
\end{equation}
By the estimate \eqref{est:for-alp-lam-q-DN} obtained in Section~\ref{Sec-proof-lin},
\begin{equation}\label{eq:N-linear-coeff-xi}
\mathcal N(\tilde\alpha,\tilde\lambda,\tilde q)\lesssim \delta^{\frac{\mathfrak m\theta_0}{n+2s+k_2}}.
\end{equation}
Collecting the above bounds in \eqref{est-mid-J-xi}, and then dividing
by $\sigma^2$, yields
\begin{equation}\label{est:J-xi-delta-sigma}
\begin{split}
\sigma^{-2}|I_{\tilde\xi}|\lesssim\varepsilon^{-2}\sigma^{-\frac32}\delta\,\ell^{-1-2N}+\varepsilon
\sigma^{3s+6}\ell^{-3s-10-8N}+\sigma^{n+\frac92}\ell^{-8-6N-n}\delta^{\frac{\mathfrak m\theta_0}{n+2s+k_2}}.
\end{split}
\end{equation}
We note that the powers of $\sigma$ and $\ell$ used here are not sharp.

For later analysis, we are also required to prove the following lemma.

\begin{lemma}[Uniformity of the GO construction]\label{lem:uniform-GO-xi}
Assume that $\Omega\subset\mathbb R^n$ is bounded with smooth strictly
convex boundary and
\[
\sqrt b\,T>{\rm diam}\,\Omega.
\]
Let $p$ be chosen as in \eqref{eq:p-choice-xi}. Then there
exist
\[
d_*>0,\quad \ell_*>0,\quad C\ge1,
\]
depending only on $\Omega$, $b$, $T$, $\supp\Psi$, the fixed coefficient extensions, the \emph{a priori} coefficient bounds, and the order $N$, such that the following statements hold uniformly for all $(x_-,\theta)\in\partial_-S_E\Omega$.

{\rm (i)} It holds that
\begin{equation}\label{eq:endpoint-center-separation-xi}
{\rm dist}\bigl(c_{x_-,\theta}(0),\overline\Omega\bigr) \ge d_*,
\quad
{\rm dist}\bigl(c_{x_-,\theta}(T),\overline\Omega\bigr)\ge d_*.
\end{equation}
Consequently, \eqref{eq:uniform-endpoint-support-GO-xi} holds for every
$0<\ell\le\ell_*$.

{\rm (ii)} For every fixed finite order of derivatives involved in the construction,
\[
\|\partial_{t,x}^{\beta}a_k\|_{L^\infty(\Omega_T)}+\|\partial_{t,x}^{\beta}d_k\|_{L^\infty(\Omega_T)}
\le C_{\beta,k} \ell^{-\frac n3-|\beta|-2k},
\]
where $C_{\beta,k}$ is independent of $(x_-,\theta)$. The residual estimates for the truncated forward and adjoint GO ansatzes are uniform with respect to $(x_-,\theta)$ as well.

{\rm (iii)} The remainder estimates and the boundary trace estimates \eqref{est:remainder-r-sig-GO},
\eqref{eq:GO-boundary-xi}, and \eqref{eq:A-s-GO-xi}
hold with constants independent of $(x_-,\theta)$. In particular, the
constants in \eqref{est:J-ell-I-xi} and
\eqref{est:J-xi-delta-sigma} are uniform over
$\partial_-S_E\Omega$.
\end{lemma}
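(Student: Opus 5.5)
Part (i) is elementary geometry of straight lines. We have $c_{x_-,\theta}(t)=p+\sqrt b\,t\,\theta$ and $c_{x_-,\theta}(t_-)=x_-$, so
\[
c_{x_-,\theta}(0)=x_--\sqrt b\,t_-\,\theta,\qquad t_-\ge\tfrac{\sqrt bT-{\rm diam}\,\Omega}{2\sqrt b}=:t_*>0
\]
by \eqref{eq:uniform-time-margin-GO-xi}. Since $\Omega$ is strictly convex and $\theta\cdot\nu(x_-)<0$, the backward ray $\{x_--r\theta: r>0\}$ does not meet $\overline\Omega$. We need a quantitative version that is uniform even for grazing $\theta$. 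Write $\Omega$ as the intersection of its supporting half-spaces at $x_-$. Strict convexity gives $\langle y-x_-,\nu(x_-)\rangle\le -\kappa_0|y-x_-|^2$ for all $y\in\overline\Omega$, where $\kappa_0>0$ is a lower bound for the principal curvatures (after comparison with a ball). Set $z=x_--r\theta$ with $r=\sqrt b\,t_-\ge\sqrt b\,t_*$. For $y\in\overline\Omega$, split into two cases. If $|y-x_-|\le r/2$, then $|z-y|\ge r/2$. If $|y-x_-|>r/2$, then the normal component gives
\[
\langle z-y,\nu\rangle=-r\,\theta\cdot\nu-\langle y-x_-,\nu\rangle\ge\kappa_0 r^2/4,
\]
because $-\theta\cdot\nu>0$. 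In both cases $\operatorname{dist}(z,\overline\Omega)\ge\min\{r/2,\kappa_0r^2/4\}$. This gives $d_*=\min\{\sqrt b t_*/2,\kappa_0 b t_*^2/4\}$, independent of $(x_-,\theta)$. The same argument at the exit point $x_+$, using $T-t_+\ge t_*$ and $\theta\cdot\nu(x_+)>0$, handles $c_{x_-,\theta}(T)$. For the consequence, $\supp\Psi_\ell(\cdot-p)\subset p+\ell\,\supp\Psi\subset B(c(0),\ell R_\Psi)$, so $\ell_*=d_*/(2R_\Psi)$ works. At $t=T$, note that $x+\sqrt bT\omega-p$ vanishes at $x=c(T)$, so the analogous inclusion holds there.

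For (ii), solve the transport equations by characteristics. Each $a_k$ is given by Duhamel integrals along the lines $x+\sqrt b(t-\tau)\omega$, over $\tau\in[0,T]$, of expressions built from $\Psi_\ell$ composed with the translating variable, the exponential weights $e^{\pm\frac12\int\Upsilon_j}$, and $P_1a_{k-1}$, $La_{k-2}$. The weights and their derivatives are bounded by the $C^K$ norms of the fixed extensions, uniformly in $\omega$ and $p$, because the integration interval has fixed length $T$ and $\Upsilon_j$ is smooth on a fixed compact set containing all relevant points ($p$ ranges over a bounded set). We then induct on $k$. Each application of $P_1$ costs at most two derivatives, hence a factor $\ell^{-2}$, and $L$ costs three. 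The $a_{k-2}$ term therefore contributes $\ell^{-\frac n3-2(k-2)-3}$, which is smaller than $\ell^{-\frac n3-2k}$. The base bound $\|\partial^\beta\Psi_\ell\|_\infty\lesssim\ell^{-n/3-|\beta|}$ gives the claim. The $d_k$ are handled identically, and the residuals are $\sigma^{1-N}(iP_1a_N+La_{N-1})+\sigma^{-N}La_N$ with uniform constants.

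For (iii), we rerun the energy estimate of Proposition~\ref{prop:wellposed} for \eqref{eq:sys-remain-GO}. Its constant depends only on $\Omega,T,m_0,M_0$. The source bound uses (ii), and the support statement from (i) guarantees the zero initial and terminal jets, so compatibility holds uniformly. The trace bounds \eqref{eq:GO-boundary-xi}, \eqref{eq:A-s-GO-xi} follow from the trace theorem applied to the uniformly bounded amplitudes on $\Omega_T$. The main obstacle is the grazing regime in (i): naive bounds degenerate as $\theta\cdot\nu(x_-)\to0$. The quadratic convexity estimate above is exactly what removes this degeneracy, so I would carefully justify $\langle y-x_-,\nu\rangle\le-\kappa_0|y-x_-|^2$ uniformly over the compact, smooth, strictly convex boundary.
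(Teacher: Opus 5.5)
Your proposal is correct. Parts (ii) and (iii) follow the paper's argument: Duhamel integrals along the characteristics of $X_\omega$, preservation of the translating support variable $x+\sqrt b\,t\,\omega-p$, and energy and trace constants that depend only on the fixed cylinder $\Omega_T$. Your explicit derivative count in the recursion ($P_1$ of order two, $L$ of order three, so the $a_{k-2}$ contribution is subordinate) is somewhat more detailed than the paper's appeal to repeated differentiation of the transport equations.

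The genuine difference is in (i). The paper argues by compactness. It passes to the closure $\overline{\partial_-S_E\Omega}$, including tangential directions, and uses strict convexity to extend $\tau_E$ continuously with $\tau_E=0$ on tangential directions. This makes $c_{x_-,\theta}(0)$ and $c_{x_-,\theta}(T)$ continuous in $(x_-,\theta)$. It then checks pointwise that these points lie outside $\overline\Omega$; in the tangential case this holds because the tangent line meets $\overline\Omega$ only at $x_-$. Finally, it takes the minimum of a positive continuous distance function over a compact set.

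You instead obtain a quantitative bound directly from the supporting-ball inequality $\langle y-x_-,\nu(x_-)\rangle\le-\kappa_0|y-x_-|^2$ and the two-case split. This gives the explicit constant $d_*=\min\{\sqrt b\,t_*/2,\ \kappa_0 b\,t_*^2/4\}$ and avoids having to justify continuity of the chord length up to grazing directions. The cost is that you need a uniform positive lower bound on the principal curvatures, via Blaschke's rolling-ball theorem or the local-graph-plus-compactness argument you indicate. That bound is available under the usual meaning of strict convexity (positive definite second fundamental form). The paper's route uses only that tangent lines touch $\overline\Omega$ at a single point, but it yields no explicit $d_*$.
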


The proof of the above lemma is given in the appendix.

We next identify the weighted ray transform contained in $J_\ell$.
Set
\[
z=x+\sqrt bt\,\omega-p.
\]
Since $\tilde\xi=0$ on $\Gamma$, its zero extension to $\mathbb R^n$
is globally Lipschitz. In the attenuation factors we use the fixed extensions introduced in the GO construction above. In particular, in $\mathbb R^n\backslash\Omega$, $\tilde\alpha$ means the difference of the fixed extensions of $\alpha_2$ and $\alpha_1$. Then
\begin{equation}\label{eq:J-ell-change-var}
\begin{split}
J_\ell=\int_0^T\int_{\mathbb R^n}\tilde\xi(p-\sqrt bt\,\omega+z)\Psi_\ell^3(z)e^{-\frac12\int_0^t\Upsilon_2(p-\sqrt b\tau\,\omega+z)\,d\tau}e^{-\frac12\int_0^t\tilde\alpha(p-\sqrt b\tau\,\omega+z)\,d\tau}\,dzdt.
\end{split}
\end{equation}
Using the fact that $\int_{\mathbb R^n}\Psi_\ell^3\,dz=1$, we can define
\begin{equation}\label{eq:J0-xi}
\begin{split}
J_0\vcentcolon=\int_0^T\tilde\xi(p-\sqrt bt\,\omega)e^{-\frac12\int_0^t\Upsilon_2(p-\sqrt b\tau\,\omega)\,d\tau}
e^{-\frac12\int_0^t\tilde\alpha(p-\sqrt b\tau\,\omega)\,d\tau}dt.
\end{split}
\end{equation}
Denote by
\begin{equation}
\begin{split}
F_{\tilde\xi}(t,z)\vcentcolon=&\tilde\xi(p-\sqrt bt\,\omega+z)e^{-\frac12\int_0^t\Upsilon_2(p-\sqrt b\tau\,\omega+z)\,d\tau}
e^{-\frac12\int_0^t\tilde\alpha(p-\sqrt b\tau\,\omega+z)\,d\tau}.
\end{split}
\end{equation}
The zero extension of $\tilde\xi$ is in $W^{1,\infty}(\mathbb R^n)$. Therefore
\[
|F_{\tilde\xi}(t,z)-F_{\tilde\xi}(t,0)|
\lesssim |z|.
\]
It then follows that
\begin{equation}\label{eq:Jell-J0}
\begin{split}
|J_\ell-J_0|\le \int_0^T\int_{\mathbb R^n} |\Psi_\ell(z)|^3|F_{\tilde\xi}(t,z)-F_{\tilde\xi}(t,0)|\,dzdt\lesssim \ell.
\end{split}
\end{equation}
The implicit constant in \eqref{eq:Jell-J0} is uniform over the oriented line family, since it follows only from the fixed Lipschitz bound of the zero extension of $\tilde\xi$ and the fixed smooth extensions in the attenuation factors.

Let
\begin{equation}
t_-
\vcentcolon=
\inf\{t\in[0,T]:p-\sqrt bt\,\omega\in\overline\Omega\},
\qquad
t_+
\vcentcolon=
\sup\{t\in[0,T]:p-\sqrt bt\,\omega\in\overline\Omega\}.
\end{equation}
Since $\Omega$ is convex, the intersection of the line with $\overline\Omega$ is the segment corresponding to $[t_-,t_+]$.
Because the zero extension of $\tilde\xi$ vanishes outside $\Omega$, $J_0$ is effectively integrated only over $[t_-,t_+]$.
Set
\begin{equation}\label{eq:me-xi}
m_e(p,\omega)\vcentcolon=e^{-\frac12\int_0^{t_-}\big(\Upsilon_2+\tilde\alpha\big)(p-\sqrt b\tau\,\omega)\,d\tau}.
\end{equation}
By the uniform bounds on the chosen extensions,
\[
|m_e(p,\omega)|
+
|m_e(p,\omega)|^{-1}
\lesssim1
\]
uniformly in the relevant rays. Hence
\begin{equation}\label{eq:J0-normalized}
\begin{split}
m_e^{-1}(p,\omega)J_0=\int_{t_-}^{t_+}\tilde\xi(p-\sqrt bt\,\omega)e^{-\frac12\int_{t_-}^t\Upsilon_2(p-\sqrt b\tau\,\omega)\,d\tau}e^{-\frac12\int_{t_-}^t\tilde\alpha(p-\sqrt b\tau\,\omega)\,d\tau}dt.
\end{split}
\end{equation}
Denote by
\begin{equation}\label{eq:J1-xi}
J_1\vcentcolon=\int_{t_-}^{t_+}\tilde\xi(p-\sqrt bt\,\omega)e^{-\frac12\int_{t_-}^t\Upsilon_2(p-\sqrt b\tau\,\omega)\,d\tau} dt.
\end{equation}
We can get
\begin{equation}\label{eq:J1-J0}
|J_1|\lesssim |J_0|+\|\tilde\alpha\|_{L^\infty(\Omega)}.
\end{equation}

Combining \eqref{est:J-ell-I-xi}, \eqref{est:J-xi-delta-sigma}, \eqref{eq:Jell-J0}, and \eqref{eq:J1-J0}, and absorbing 
$\|\tilde\alpha\|_{L^\infty}$ into \eqref{eq:N-linear-coeff-xi}, we can obtain
\begin{equation}\label{eq:J1-master-xi}
\begin{split}
|J_1|&\lesssim\varepsilon^{-2}\sigma^{-\frac32}\delta\ell^{-1-2N}+\varepsilon\sigma^{3s+6}\ell^{-3s-10-8N}\\
&\quad+\sigma^{n+\frac92}\ell^{-8-6N-n}\delta^{\frac{\mathfrak m\theta_0}{n+2s+k_2}}+\sigma^{-1}\ell^{-(6N+5)}+\ell.
\end{split}
\end{equation}
We now choose
\begin{equation}\label{eq:parameters-xi}
\sigma=c_7\delta^{-(6s+12)\eta},\quad\ell=\delta^\eta,\quad\varepsilon=c_8\delta^{\frac13+(6s^2+30s+35)\eta},
\end{equation}
where
\begin{equation}\label{eq:eta-xi}
\eta\vcentcolon=\min\Bigl\{\frac{1}{3(12s^2+53s+56)},\,\frac{\mathfrak m\theta_0}{(n+2s+k_2)(6sn+13n+33s+69)}\Bigr\}.
\end{equation}
Since $N=s+1$, the relation $\sigma\sim \ell^{-(6N+6)}$ holds, and thus \eqref{eq:sigma-ell-regime} is satisfied. After
decreasing the common $\delta_0>0$ once, we also have $\ell=\delta^\eta\le\ell_*,$
so the uniform endpoint support and compatibility conclusions of Lemma~\ref{lem:uniform-GO-xi} apply to every ray used in the
argument.

Substituting these parameters into \eqref{eq:J1-master-xi}, we have $|J_1|\lesssim\delta^\eta$ uniformly with respect to $p$ and $\omega$. Indeed, the first two terms have the exponent
\[
\frac13-(12s^2+53s+55)\eta\ge\eta,
\]
the fourth term is of order $\delta^\eta$, and the third one is controlled by the second choice in \eqref{eq:eta-xi}.
Moreover, using \eqref{eq:A-s-GO-xi},
\begin{equation}
\|\varepsilon f\|_{\mathcal A^s(\Gamma_T)}\lesssim\varepsilon\sigma^{s+\frac52}\ell^{-3s-5+\frac n6}\lesssim \delta^{\frac13+\frac{n\eta}{6}}.
\end{equation}
Hence, after decreasing $\delta_0$ if necessary, both
$\varepsilon f$ and $2\varepsilon f$ belong to
$\mathcal U_{\Gamma_T}^{\epsilon_0,s}$, as required in the
second finite-difference argument.

It remains to identify $J_1$ as a weighted X-ray transform in Euclidean spaces. We still keep the notation $\partial_-S\Omega$ for the corresponding incoming ray space. Set
\begin{equation}
x_-=p-\sqrt bt_-\,\omega,\quad\tilde\omega=-\omega,\quad l=\sqrt b(t-t_-),
\end{equation}
\begin{equation}
\gamma_{x_-,\tilde\omega}(l)=x_-+l\tilde\omega,\quad \tau(x_-,\tilde\omega)=\sqrt b(t_+-t_-).
\end{equation}
Then
\begin{equation}\label{eq:J1-weighted-Xray}
J_1=b^{-\frac12}\int_0^{\tau(x_-,\tilde\omega)}\tilde\xi(\gamma_{x_-,\tilde\omega}(l))e^{-\int_0^l
\widetilde\Upsilon_2(\gamma_{x_-,\tilde\omega}(r))\,dr}dl,
\end{equation}
where $\widetilde\Upsilon_2=(4b)^{-\frac12}\Upsilon_2=\frac{1}{2\sqrt b}\Upsilon_2.$
Take a smooth compactly supported
extension of $\widetilde\Upsilon_2$ to $\mathbb R^n$ and define the
scalar weight
\begin{equation}\label{eq:weight-W-xi}
W(x,\theta)\vcentcolon=e^{-\int_{-\infty}^0\widetilde\Upsilon_2(x+r\theta)\,dr},\quad
(x,\theta)\in\mathbb R^n\times\mathbb S^{n-1}.
\end{equation}
Then $W$ is smooth and nonvanishing, with the estimate
\[
\|W\|_{C^k}+\|W^{-1}\|_{L^\infty}\lesssim 1
\]
uniformly over the admissible coefficient class, provided $K$ is chosen sufficiently large. Moreover,
\[
W(\gamma_{x_-,\tilde\omega}(l),\tilde\omega)=W(x_-,\tilde\omega)e^{-\int_0^l\widetilde\Upsilon_2(\gamma_{x_-,\tilde\omega}(r))\,dr}.
\]
Since $W(x_-,\tilde\omega)$ and its inverse are uniformly bounded,
\eqref{eq:J1-weighted-Xray} and the uniform estimate for $J_1$ imply
\begin{equation}\label{eq:IW-xi}
\|I_W\tilde\xi\|_{L^\infty(\partial_-S\Omega)}
\lesssim
\delta^\eta.
\end{equation}
Consequently,
\[
\|I_W\tilde\xi\|_{L^2(\partial_-S\Omega)}
\lesssim
\delta^\eta.
\]

Applying \cite[Theorem 5.1]{bohr2021stability} to the weighted ray transform $I_W$, the uniform bounds on $W$ and $W^{-1}$ allow us to use the exponent $\mathfrak m$ fixed in Section~\ref{Sec-proof-lin}, after decreasing
it once if necessary. Hence, we can conclude that
\begin{equation}\label{eq:xi-stability-L2}
\|\tilde\xi\|_{L^2(\Omega)}
\lesssim
\delta^{\mathfrak m\eta}.
\end{equation}
Using the interpolation inequality between $L^2(\Omega)$ and the fixed
a priori $H^{s_*}(\Omega)$ bound, where $s_*>n/2$, there is
$\theta_4\in(0,1)$ such that
\begin{equation}\label{eq:xi-stability-Linf}
\|\tilde\xi\|_{L^\infty(\Omega)}
\lesssim
\delta^{\mathfrak m\eta\theta_4}.
\end{equation}
Combining this with \eqref{est:for-alp-lam-q-DN}, we finally obtain
\begin{equation}\label{est:a-lam-q-xi-proof}
\|\alpha_2-\alpha_1\|_{L^\infty(\Omega)}+\|\lambda_2-\lambda_1\|_{L^\infty(\Omega)}+\|q_2-q_1\|_{L^\infty(\Omega)}+
\|\xi_2-\xi_1\|_{L^\infty(\Omega)}\lesssim\delta^{\widetilde\mu_0},
\end{equation}
where
\begin{equation}
\widetilde\mu_0\vcentcolon=\min\Bigl\{\frac{\mathfrak m\theta_0}{n+2s+k_2},\mathfrak m\eta\theta_4\Bigr\}\in(0,1).
\end{equation}
Hence, the proof of Theorem~\ref{thm:stable-alp-lam-q-xi} is complete. \hfill $\square$


\section{Conclusions}\label{sec:conclusion}
In this paper, we are concerned with the stable determination of the space-varying coefficients $\alpha,\lambda,q,\xi$ appearing in the JMGT equation from the knowledge of the DN map $\Lambda_{\alpha,\lambda,q,\xi}$. The proof of the main theorems combine the linearization method with suitable Gaussian beam and GO solutions for linearized MGT equations. The classical foliation condition and the geodesic ray transforms play an essential role in establishing the stability result. 

Throughout this paper, we have assumed that $b$ is a known function, so it is more challenging to establish the stability result of determining $b$ for the (J)MGT equation. It is also of great interest to study the stable determination of the coefficients when only partial boundary measurements are available.

\appendix
\section{Proof of Lemmas \ref{lem:uniform-ray-family} and \ref{lem:uniform-GO-xi}}

\emph{Proof of Lemma \ref{lem:uniform-ray-family}.}\quad We first establish the exterior estimate used in {\rm (i)} and {\rm (iii)}. Set
\[
D_g\vcentcolon={\rm diam}_g(\Omega)<T,
\qquad
d_T\vcentcolon=T-D_g>0.
\]
Let $(\Omega_e,g_e)$ be a fixed complete extension of the metric extension used in Section~\ref{sec:Gaussian-beams}, with
$\overline\Omega\Subset\Omega_e$. The fixed coefficient extensions are taken
on the same spatial extension. Choose \(r_c>0\) sufficiently small so that the two-sided tubular
neighborhood
\[
\mathcal C_{r_c}\vcentcolon=\bigl\{x\in\Omega_e:\operatorname{dist}_{g_e}(x,\partial\Omega)<r_c\bigr\}
\]
is well defined and the nearest-point projection onto
\(\partial\Omega\) is smooth. Define the signed distance function
\(r\in C^\infty(\mathcal C_{r_c})\) by
\[
r(x)=\begin{cases}
\operatorname{dist}_{g_e}(x,\partial\Omega),& x\in \Omega\cap\mathcal C_{r_c},
\\[1mm]
-\operatorname{dist}_{g_e}(x,\partial\Omega),
& x\in (\Omega_e\setminus\overline\Omega)\cap\mathcal C_{r_c}.
\end{cases}
\]
Thus $r>0$ in $\Omega$, $r=0$ on $\partial\Omega$ and $r<0$ outside $\overline\Omega$. Moreover, $\nabla_{g_e}r=-\nu$ on $\partial\Omega.$

By the strict convexity of $\partial\Omega$, the smoothness of the signed distance function $r$ in the collar neighborhood, and the compactness of $\partial\Omega$, after decreasing $r_c>0$ if necessary, there exist constants $\eta_0>0$, $c_0>0$, and $B\ge1$ such that, whenever $|r|\le r_c$ and $|v|_{g_e}=1$, it holds that
\begin{equation}\label{eq:two-sided-collar}
|\nabla_{g_e}^2r(v,v)|\le B,
\qquad
\nabla_{g_e}^2r(v,v)\le-c_0
\quad\text{if }|dr(v)|\le\eta_0.
\end{equation}
Let $\gamma_{\rm out}(u)$ denote the unit-speed continuation of $\gamma$ from either physical endpoint in the direction pointing outside $\Omega$. At the exit endpoint, this is the continuation of the original orientation of $\gamma$, whereas at the incoming endpoint it is the continuation of the reversed geodesic. Put
\[
a=\iota_-\quad\text{or}\quad a=\iota_+,
\qquad
q(u)=r(\gamma_{\rm out}(u)).
\]
Then
\[
q(0)=0,\qquad q'(0)=-a,\qquad 0\le a\le1.
\]
Choose $\ell_*>0$ so small that
\begin{equation}\label{eq:ell-star-uniform}
2\ell_*<r_c,\qquad
2B\ell_*\le\frac{\eta_0}{4},\qquad
\ell_*<\frac{d_T}{8},\qquad
\ell_*<1.
\end{equation}
For $0\le u\le2\ell_*$, the continued geodesic remains in the
two-sided collar. If $a\le\eta_0/2$, then
\[
|q'(u)|\le a+2B\ell_*\le\frac{3\eta_0}{4},
\]
and hence the strict convexity estimate in
\eqref{eq:two-sided-collar} applies to this interval, giving
\[
q(u)\le-au-\frac{c_0}{2}u^2.
\]
If $a\ge\eta_0/2$, then
\[
q'(u)\le-a+2B\ell_*\le-\frac a2,
\]
so $q(u)\le-\frac a2u.$
Therefore there is a uniform constant $c_e>0$ such that
\begin{equation}\label{eq:uniform-exterior-clearance}
{\rm dist}_{g_e}(\gamma_{\rm out}(u),\Omega)=-q(u)\ge c_e(au+u^2),\qquad 0\le u\le2\ell_*.
\end{equation}
In particular, if $\ell_*/2\le u\le\ell_*$, then
\[
\operatorname{dist}_{g_e}(\gamma_{\rm out}(u),\Omega)\ge c_eu^2\ge\frac{c_e\ell_*^2}{4}.
\]
Hence the prolonged central geodesic is separated from $\Omega$ by a uniform positive distance on this interval, independently of the incidence parameter $a$, including in the grazing limit $a\to0$.

\medskip
\emph{Proof of {\rm (i)}.}
For a maximal unit-speed geodesic
$\gamma=\gamma_{p_0,\zeta}$, write $\tau=\tau(p_0,\zeta)\le D_g$ and translate the associated null geodesic in time so that $t_-=(T-\tau)/2,$ $t_+=(T+\tau)/2.$
Then
\begin{equation}\label{eq:uniform-time-margin}
t_-\ge \frac{d_T}{2},
\qquad
T-t_+\ge \frac{d_T}{2}.
\end{equation}
By \eqref{eq:ell-star-uniform}, the central prolongation of length
$\ell_*$ at both ends remains a fixed positive distance from the time
faces. For the Fermi-coordinate estimates, consider the compact parameter set
\[
\mathcal P=\overline{\partial_-S\Omega}\times[-2\ell_*,D_g+2\ell_*]
\]
together with the corresponding compact orthonormal frame bundle.
The completeness of $(\Omega_e,g_e)$ guarantees that every spatial geodesic in the family can be prolonged for the uniform amount needed in the construction. Since the corresponding family of initial data and orthonormal frames is compact, it can be covered by finitely many smooth frame trivializations. Moreover, the metric and all coefficients
entering the Gaussian beam construction are independent of $t$. Therefore, the ray-dependent translation in the time variable used to place each null geodesic inside $(0,T)$ does not affect the constants
in the Fermi coordinate construction, the Riccati and transport equations, or the Sobolev estimates.

Let $\mathcal F_\gamma=\Phi_\gamma^{-1}$ be the inverse Fermi coordinate map. Locally one may write
\[
t=t_0+\frac{s-z_1}{\sqrt2},\qquad x(s,0)=\gamma\left(\frac{s-s_-}{\sqrt2}\right).
\]
Since $D\mathcal F_\gamma(s,0)$ is invertible along the central
geodesic, the inverse-function theorem gives a local Fermi coordinate
chart near every point $(s,0)$. By the compactness of the parameter family and the quantitative inverse-function theorem, these local neighborhoods can be chosen with uniform sizes. More precisely, there exist $r_0>0$, $\rho_0>0$, and $C_K>0$, independent of $\gamma$, such that for every center point $s_0$, the map $\mathcal F_\gamma$ is a
diffeomorphism on
\[
(s_0-r_0,s_0+r_0)\times B(0,\rho_0),
\]
and
\begin{equation}\label{eq:uniform-Fermi-local}
\|\mathcal F_\gamma\|_{C^K}+\|\mathcal F_\gamma^{-1}\|_{C^K}\le C_K.
\end{equation}

We claim that there is a common $\rho_*>0$ for which $\mathcal F_\gamma$ is injective on every complete prolonged tube. Otherwise, for each $j\ge1$ one could choose a ray $\gamma_j$ and two distinct points
$(s_j,z'_j)$, $(\widetilde s_j,\widetilde z'_j)$ with
$|z'_j|,|\widetilde z'_j|<1/j$ such that
\[
\mathcal F_{\gamma_j}(s_j,z'_j)
=
\mathcal F_{\gamma_j}(\widetilde s_j,\widetilde z'_j).
\]
Equality of the time coordinates yields
\[
|s_j-\widetilde s_j|=|z_{1,j}-\widetilde z_{1,j}|\rightarrow0.
\]
For all sufficiently large $j$, the two points are contained in a common local Fermi neighborhood of uniform size on which
$\mathcal F_{\gamma_j}$ is injective, which is a contradiction.

By \eqref{eq:uniform-exterior-clearance}, the prolonged central
geodesic is separated from $\Omega$ by a uniform positive distance on
the end regions $\ell_*/2\le u\le\ell_*$. Together with the uniform
local bounds in \eqref{eq:uniform-Fermi-local}, this allows us, after
decreasing the transverse radius $\rho_*>0$ if necessary, to ensure
that the corresponding transverse end regions of the Fermi tube are
disjoint from $\overline{\Omega_T}$.
Moreover, the relation
\[
t=t_0+\frac{s-z_1}{\sqrt2},
\]
combined with \eqref{eq:uniform-time-margin}, allows $\rho_*$ to be
chosen uniformly so that the whole effective tube remains a positive
distance from the time faces $\{t=0\}$ and $\{t=T\}$. Hence the
longitudinal cutoff may be chosen equal to one on the portion of the
tube intersecting $\overline\Omega_T$, while
\[
\operatorname{dist}\left(\operatorname{supp}\partial_s\chi_\gamma,\overline\Omega_T\right)\ge \epsilon_*
\]
for some $\epsilon_*>0$ independent of $\gamma$. Finally, since the longitudinal length of the prolonged tubes is
uniformly bounded, each tube can be covered by at most $N_0$ local
Fermi coordinate neighborhoods of fixed size. A subordinate partition
of unity can therefore be chosen with the uniform bounds stated in
\eqref{eq:uniform-Fermi-bounds}.

Fix the same initial matrix $H_0$ with $\Im H_0>0$ for every ray.
The matrices $Y,Z$ in the Riccati linearization depend continuously on
the compact parameter set and satisfy $\det Y\neq0$. Consequently,
\begin{equation}\label{eq:uniform-Y}
\|Y\|+\|Y^{-1}\|+\|Z\|\le C.
\end{equation}
Using
\[
H=ZY^{-1},\quad \Im H=(Y^{-1})^*(\Im H_0)Y^{-1},
\]
we obtain
\begin{equation}\label{eq:uniform-ImH}
cI\le\Im H(s)\le CI,
\end{equation}
as well as
\begin{equation}\label{eq:uniform-det-HY}
\det\Im H(s)\,|\det Y(s)|^2
=
\det\Im H_0.
\end{equation}
The higher-order coefficients in the Taylor expansion of the phase,
as well as the successive transport coefficients of the amplitudes,
satisfy a finite system of linear ODEs along the central geodesic.
By the uniform bounds established above for the Fermi coordinates,
the metric coefficients, and the Riccati solution, the coefficients
of these ODEs are uniformly bounded over the whole geodesic family.
An induction on the construction order, together with Gr\"onwall's
inequality, therefore yields uniform bounds for all phase and amplitude
coefficients involved in the fixed finite-order Gaussian beam
construction. Moreover, along the central geodesic one has
\[
\varphi_t(s,0)=-\frac1{\sqrt2},
\]
while the uniform positivity of $\Im H(s)$ gives
\[
\Im\varphi(s,z')
=
\frac12\langle \Im H(s)z',z'\rangle
+O(|z'|^3).
\]
Hence, after decreasing the uniform transverse radius $\rho_*>0$ if
necessary, there exists $c>0$, independent of the geodesic, such that
\begin{equation}\label{eq:uniform-phase-positive}
\Im\varphi(s,z')\ge c|z'|^2,
\qquad
|\varphi_t(s,z')|\ge c,
\qquad
|z'|\le\rho_*.
\end{equation}

The uniform bounds established above for the phase and amplitude coefficients imply that all residual estimates arising from the finite-order eikonal and transport construction hold with constants independent of the underlying ray. The errors generated by the transverse cutoffs are exponentially small in \(\sigma\), whereas the derivatives of the longitudinal cutoffs are supported in the prolonged end regions lying outside \(\overline\Omega_T\), and hence do not contribute to the residual inside the cylinder. The forward and adjoint source estimates on the fixed domain \(\Omega_T\) therefore yield the corresponding exact correction estimates with constants uniform over the geodesic family.
The boundary trace estimates are obtained from the standard integer-order trace theorem on the fixed cylinder. More precisely,
$$
\|f_\sigma\|_{H^l(\Gamma_T)}\le C_{\rm tr}\|v_{\sigma}\|_{H^{l+1}(\Omega_T)},\qquad l=0,1,\ldots,
$$
and the same estimate holds for the adjoint Gaussian beam. Since the trace constant \(C_{\rm tr}\) depends only on the fixed domain and the relevant Sobolev order, it is independent of the geodesic parameters \((p_0,\zeta)\). Consequently, no loss depending on the incidence angle occurs in these boundary estimates. This completes the proof of {\rm (i)}.

\medskip
\emph{Proof of {\rm (ii)}.}
We work in the interior part of the collar neighborhood introduced above. Let $\gamma=\gamma_{p_0,\zeta}$
be a geodesic entering \(\Omega\) at \(u=0\), and set
$$
a\vcentcolon=\iota_->0,
\qquad
q(u)\vcentcolon=r(\gamma(u)).
$$
By the definition of the signed distance function and of the incoming
incidence parameter,
$$
q(0)=0,
\qquad
q'(0)=a.
$$
Fix \(A>2/c_0\). We then choose \(a_*>0\) sufficiently small so that
$$
Aa_*<\frac{r_c}{2},
\qquad
(1+AB)a_*<\eta_0.
$$
Suppose that \(0<a\le a_*\). As long as the geodesic remains in
\(\Omega\) and \(0\le u\le Aa\), the bound
$$
|\nabla_{g_e}^2r|\le B
$$
implies
$$
|q'(u)-a|
\le Bu.
$$
Consequently,
$$
|q'(u)|
\le a+BAa
=(1+AB)a
\le\eta_0,
$$
and hence the strict concavity estimate in
\eqref{eq:two-sided-collar} is applicable throughout this interval.
Therefore,
$$
-B\le q''(u)\le-c_0,
\qquad
0\le u\le\min\{\tau,Aa\}.
$$
Integrating twice and using \(q(0)=0\) and \(q'(0)=a\), we obtain
\begin{equation}\label{eq:tau-angle-local}
au-\frac B2u^2\le q(u)\le au-\frac{c_0}{2}u^2,\qquad 0\le u\le\min\{\tau,Aa\}.
\end{equation}
Since \(A>2/c_0\), the upper bound in
\eqref{eq:tau-angle-local} is nonpositive at \(u=Aa\). Hence the first
exit time satisfies $\tau\le Aa.$
On the other hand, the lower bound in
\eqref{eq:tau-angle-local} is strictly positive whenever $0<u<\frac{2a}{B}.$
Thus the geodesic cannot return to \(\partial\Omega\) before a time
comparable to \(a\). It follows that there exist constants
\(c,C>0\), independent of the ray, such that
$$
c\,\iota_-
\le
\tau(p_0,\zeta)
\le
C\,\iota_-,
\qquad
0<\iota_-\le a_*.
$$
Applying the same argument to the reversed geodesic at the exit point
gives
\begin{equation}\label{eq:tau-angle-exit}
c\,\iota_+
\le
\tau(p_0,\zeta)
\le
C\,\iota_+,
\qquad
0<\iota_+\le a_*.
\end{equation}
After fixing \(0<\kappa_0<a_*\), the estimate
\eqref{eq:grazing-length} follows.

We next establish the uniform lower bound in
\eqref{eq:good-ray-lower-length}. Since $|q''|\le B$ in the collar,
choose $c_1>0$ such that $c_1\le \frac{r_c}{2}$ and $Bc_1\le 1.$
We claim that
\[
\tau(p_0,\zeta)\ge c_1\iota_-.
\]
Indeed, writing $a=\iota_-$, suppose for contradiction that $\tau<c_1a$. Since $a\le1$, one has
\[
\tau<c_1a\le c_1\le \frac{r_c}{2},
\]
so the geodesic segment up to the first exit remains inside the collar. The bound $|q''|\le B$, together with
$q(0)=0$ and $q'(0)=a$, therefore yields
\[
q(u)\ge au-\frac B2u^2,
\qquad 0\le u\le\tau.
\]
Evaluating at the first exit time and using $q(\tau)=0$, we obtain
\[
0=q(\tau)\ge a\tau-\frac B2\tau^2=a\tau\Bigl(1-\frac{B\tau}{2a}\Bigr).
\]
Since $\tau<c_1a$ and $Bc_1\le1$, $q(\tau)\ge \frac12a\tau>0$, which is a contradiction. Hence $\tau(p_0,\zeta)\ge c_1\iota_-.$
In particular, if $\iota_-\ge\kappa$, then $\tau(p_0,\zeta)\ge c_1\kappa,$ which proves \eqref{eq:good-ray-lower-length}.

It remains to establish \eqref{eq:grazing-measure}. Recall that
$$
\iota_-(p_0,\zeta)=-\langle\zeta,\nu(p_0)\rangle_g.
$$
For each \(p_0\in\partial\Omega\), the grazing directions form the codimension-one submanifold
$$
\{\zeta\in S_{p_0}\Omega:\langle\zeta,\nu(p_0)\rangle_g=0\},
$$
which is the equatorial sphere orthogonal to $\nu(p_0)$.
In a neighborhood of this equator, \(\iota_-\) is a smooth defining function with nonvanishing differential. Hence, by compactness of \(\partial\Omega\) and smoothness of the Riemannian density on \(\partial_-S\Omega\),
\begin{equation}\label{eq:incoming-strip-measure}
{\rm meas}_{\partial_-S\Omega}
\bigl(\{(p_0,\zeta)\in\partial_-S\Omega:
\iota_-(p_0,\zeta)<\kappa\}\bigr)\le C\kappa,\qquad 0<\kappa<\kappa_0.
\end{equation}

We next relate a small exit incidence angle to the incoming one.
Suppose that \(\iota_+<\kappa\). After decreasing \(\kappa_0\), if necessary, so that \(\kappa_0<a_*\), estimate
\eqref{eq:tau-angle-exit} yields
$$
\tau(p_0,\zeta)\le C\iota_+\le C\kappa.
$$
Choosing \(\kappa_0\) sufficiently small, the corresponding geodesic
segment is contained in the fixed collar neighborhood of
\(\partial\Omega\). Extend the outward unit normal field \(\nu\)
smoothly to this collar, and denote by
\(P_{p_+\to p_-}\) parallel transport along \(\gamma\) from the exit
point \(p_+\) to the entry point \(p_-\). Since the velocity field of
a geodesic is parallel along \(\gamma\),
$$
P_{p_+\to p_-}\zeta_+=\zeta.
$$
The smoothness of the extended normal field and the uniform bounds in
the collar therefore imply
$$
\left|\nu(p_-)-P_{p_+\to p_-}\nu(p_+)\right|_g\le C\tau \le C\kappa.
$$
Consequently,
$$
\begin{aligned}
\iota_-+\iota_+
&=\left|
\langle\zeta,\nu(p_-)\rangle_g
-\langle\zeta_+,\nu(p_+)\rangle_g
\right|\\
&=\left|
\left\langle
\zeta,\nu(p_-)-P_{p_+\to p_-}\nu(p_+)
\right\rangle_g
\right|
\le C\kappa.
\end{aligned}
$$
In particular,
$$
\{\iota_+<\kappa\}
\subset
\{\iota_-<C\kappa\}.
$$
Since
$$
\mathcal G_\kappa
=
\{\iota_-<\kappa\}\cup\{\iota_+<\kappa\},
$$
the preceding inclusion and
\eqref{eq:incoming-strip-measure} give
$$
{\rm meas}_{\partial_-S\Omega}(\mathcal G_\kappa)
\le C\kappa,
$$
which proves \eqref{eq:grazing-measure}.

\medskip
\emph{Proof of {\rm (iii)}.}
Fix \((p_0,\zeta)\in\mathcal M_\kappa\), and introduce the \(s\)-parametrization of the corresponding spatial geodesic by
$$\gamma_s(s)\vcentcolon=\gamma_{p_0,\zeta}\Bigl(\frac{s-s_-}{\sqrt2}\Bigr).$$
Then $|\dot\gamma_s(s)|_g=2^{-1/2}.$
Define $h\vcentcolon=\kappa^{-1}\sigma^{-1/4}.$
By enlarging the constant in
\eqref{eq:sigma-kappa-condition}, if necessary, we may assume that $h\le c\kappa$,
where \(c>0\) is a sufficiently small constant independent of \((p_0,\zeta)\), \(\kappa\), and \(\sigma\). It follows from
\eqref{eq:good-ray-lower-length} that the two endpoint subintervals
$$E_{\rm in}=[s_-,s_-+h]\cup[s_+-h,s_+]$$
are disjoint. We further set
$$ I_{\rm mid}=[s_-+h,s_+-h],\qquad J_{\rm ext}=J_\gamma\setminus[s_-,s_+].$$
At the incoming endpoint, the definition of the incidence parameter yields
$$ (r\circ\gamma_s)'(s_-)=\frac{\iota_-}{\sqrt2}\ge \frac{\kappa}{\sqrt2}.$$
Using the uniform bound for the second derivative of
\(r\circ\gamma_s\), we obtain
$$ r(\gamma_s(s_-+h)) \ge \frac{\kappa h}{\sqrt2}-Ch^2.$$
After increasing the constant in \eqref{eq:sigma-kappa-condition}, if necessary, we may further require $ Ch\le\frac{\kappa}{2\sqrt2}.$
Consequently,
\begin{equation}\label{eq:entry-depth} 
r(\gamma_s(s_-+h))
\ge c\kappa h
= c\sigma^{-1/4}.
\end{equation}
Applying the same argument to the reversed geodesic at the exit endpoint gives
\begin{equation}\label{eq:exit-depth}
r(\gamma_s(s_+-h))\ge c\sigma^{-1/4}.
\end{equation}

We first extend the endpoint depth estimates
\eqref{eq:entry-depth}--\eqref{eq:exit-depth} to the entire middle
interval \(I_{\rm mid}\). Observe that \(r\circ\gamma_s\) cannot attain
a local minimum in the collar neighborhood. Indeed, at any critical
point \(s_0\) of \(r\circ\gamma_s\), one has
$$ dr(\dot\gamma_s(s_0))=0,$$
so that \(\dot\gamma_s(s_0)\) is tangent to the corresponding level
set of \(r\). Since \(|\dot\gamma_s|_g=2^{-1/2}\),
\eqref{eq:two-sided-collar} yields
$$(r\circ\gamma_s)''(s_0)=\nabla_g^2r\bigl(\dot\gamma_s(s_0),\dot\gamma_s(s_0)\bigr)\le-\frac{c_0}{2}<0.$$
Thus every critical point of \(r\circ\gamma_s\) lying in the collar is
a strict local maximum. Combining this observation with \eqref{eq:entry-depth} and \eqref{eq:exit-depth}, and using the fixed
positive distance from \(\partial\Omega\) on the complement of a smaller collar, we obtain, after increasing \(\sigma_*\) if necessary,
\begin{equation}\label{eq:distance-middle-ray}
{\rm dist}_g(\gamma_s(s),\partial\Omega)\ge c\sigma^{-1/4}, \qquad s\in I_{\rm mid}.
\end{equation}
The uniform \(C^1\)-bounds for the inverse Fermi coordinate maps imply
$$ d_g\bigl(x(s,z'),\gamma_s(s)\bigr)\le C|z'|. $$
By part {\rm (i)}, the Fermi tube over \(I_{\rm mid}\) remains uniformly
separated from the time faces \(t=0\) and \(t=T\). Hence, if \(\mathcal F_\gamma(s,z')\notin\Omega_T\) for some
\(s\in I_{\rm mid}\), the corresponding spatial point \(x(s,z')\) must lie outside \(\Omega\). It then follows from
\eqref{eq:distance-middle-ray} and the preceding \(C^1\)-estimate that
$|z'|\ge c\sigma^{-1/4}$.
Using \(\Im\varphi(s,z')\ge c|z'|^2\), the normalized Gaussian mass of the portion omitted by the domain is therefore bounded by \begin{equation}\label{eq:middle-gaussian-tail}
\sigma^{n/2}\int_{|z'|\ge c\sigma^{-1/4}} e^{-c\sigma|z'|^2}\,dz'\le Ce^{-c\sqrt{\sigma}}.
\end{equation}
Consequently, over \(I_{\rm mid}\), the integration over the transverse sections may be replaced by integration over the full transverse sections, at an error of order \(O(e^{-c\sqrt{\sigma}})\).

On each full transverse section, the uniform positivity of
\(\Im H(s)\), together with the uniform finite-order bounds for
\(A_\gamma\) and \(\varphi\), permits a parameter-uniform Laplace
expansion:
$$\sigma^{n/2}\int_{\mathbb R^n}A_\gamma(s,z')e^{-2\sigma\Im\varphi(s,z')}\,dz'=\left(\frac{\pi}{2}\right)^{n/2}
\frac{A_\gamma(s,0)}{\sqrt{\det\Im H(s)}}+O(\sigma^{-1}).$$
Indeed, after the rescaling \(z'=\sigma^{-1/2}w\), the terms of order
\(\sigma^{-1/2}\) arising from the Taylor expansions of the amplitude
and the phase are odd in \(w\), and hence their integrals against the
centered quadratic Gaussian vanish. The remaining terms are of order
\(O(\sigma^{-1})\), uniformly with respect to the geodesic parameters
and the longitudinal variable \(s\), by the finite-order estimates
established in {\rm (i)}.
If several local Fermi charts are required, we apply the expansion
after inserting the uniformly bounded subordinate partition of unity.
Since the partition functions sum to one along the central geodesic
and at most \(N_0\) charts are involved, both the leading term and the
uniform \(O(\sigma^{-1})\) remainder are preserved. 

We denote by
\begin{equation}\label{eq:middle-G-def}
\mathfrak G_{\sigma,\gamma}^{\,{\rm mid}}
\vcentcolon=
\sigma^{n/2}
\int_{I_{\rm mid}}
\int_{\{z'\in B(0,\rho_*):
\,\mathcal F_\gamma(s,z')\in\Omega_T\}}
A_\gamma(s,z')
e^{-2\sigma\Im\varphi(s,z')}
\,dz'\,ds
\end{equation}
the contribution of the middle longitudinal interval
\(I_{\rm mid}\) to \(\mathfrak G_{\sigma,\gamma}\). Correspondingly,
we define
\begin{equation}\label{eq:lead-middle-G-def}
\mathfrak G_{\gamma}^{\,{\rm lead,mid}}
\vcentcolon=
\left(\frac{\pi}{2}\right)^{n/2}
\int_{I_{\rm mid}}
\frac{A_\gamma(s,0)}
{\sqrt{\det\Im H(s)}}
\,ds.
\end{equation}
Hence, we can get
\begin{equation}\label{eq:middle-SP-error}
\left|\mathfrak G_{\sigma,\gamma}^{\,{\rm mid}}-\mathfrak G_{\gamma}^{\,{\rm lead,mid}}\right|\le C\sigma^{-1}.
\end{equation}

Denote by
$$\mathfrak G_{\sigma,\gamma}^{\,{\rm end,in}}\vcentcolon=\sigma^{n/2}\int_{E_{\rm in}}
\int_{\{z'\in B(0,\rho_*):\,\mathcal F_\gamma(s,z')\in\Omega_T\}}A_\gamma(s,z')
e^{-2\sigma\Im\varphi(s,z')}\,dz'ds$$
the contribution of the physical endpoint layers to
\(\mathfrak G_{\sigma,\gamma}\). Correspondingly, we define
$$\mathfrak G_{\gamma}^{\,{\rm lead,end}}\vcentcolon=\left(\frac{\pi}{2}\right)^{n/2}\int_{E_{\rm in}}
\frac{A_\gamma(s,0)}{\sqrt{\det\Im H(s)}}\,ds.
$$
The contributions arising from the two endpoint layers are
estimated directly. Since the normalized transverse Gaussian mass is
uniformly bounded, and the corresponding leading ray density is
uniformly bounded as well, the fact that \(E_{\rm in}\) has total
length \(2h\) implies
\begin{equation}\label{eq:endpoint-error}
\left|\mathfrak G_{\sigma,\gamma}^{\,{\rm end,in}}\right|+\left|\mathfrak G_{\gamma}^{\,{\rm lead,end}}\right|\le
Ch=C\kappa^{-1}\sigma^{-1/4}.
\end{equation}
It remains to estimate the contribution from the prolonged portions
of the tube lying beyond the physical endpoints of the central
geodesic. Although the center of such a transverse section lies
outside \(\Omega\), its Gaussian tail may still intersect the cylinder \(\Omega_T\). We denote the resulting contribution over
\(s\in J_{\rm ext}\) by \(\mathfrak G_{\sigma,\gamma}^{\,{\rm ext}}\).
For \(s<s_-\), set $u=\frac{s_--s}{\sqrt2}$, whereas for \(s>s_+\), set $u=\frac{s-s_+}{\sqrt2}$.

By the uniform estimate \eqref{eq:uniform-exterior-clearance} and the condition
\(\iota_\pm\ge\kappa\), one has
$$\operatorname{dist}_{g_e}\bigl(\gamma_s(s),\Omega\bigr)\ge c\kappa u.$$
On the other hand, the uniform \(C^1\)-bounds for the Fermi coordinate maps imply that any point \(x(s,z')\in\Omega\) lying in such a transverse section must satisfy $|z'|\ge c\kappa u.$ Using
\(\Im\varphi(s,z')\ge c|z'|^2\), we can obtain
$$\sigma^{n/2}\int_{|z'|\ge c\kappa u}e^{-c\sigma|z'|^2}\,dz'\le Ce^{-c\sigma\kappa^2u^2}.$$
Since the prolonged exterior pieces have uniformly bounded longitudinal length and the principal density \(A_\gamma\) is uniformly bounded, it follows that
\begin{equation}\label{eq:exterior-physical-error}
\left|\mathfrak G_{\sigma,\gamma}^{\,{\rm ext}}\right|\le C\int_0^{\ell_*}e^{-c\sigma\kappa^2u^2}\,du\le
C\kappa^{-1}\sigma^{-1/2}.
\end{equation}
Here the estimate is performed over the actual integration region and relies only on the fixed smooth extensions of the
coefficients. Combining the middle, endpoint, and exterior portions, we have
$$\mathfrak G_{\sigma,\gamma}=\mathfrak G_{\sigma,\gamma}^{\,{\rm mid}}+\mathfrak G_{\sigma,\gamma}^{\,{\rm end,in}}
+\mathfrak G_{\sigma,\gamma}^{\,{\rm ext}},$$
whereas the leading ray integral decomposes as
$$\mathfrak G_{\gamma}^{\,{\rm lead}}=\mathfrak G_{\gamma}^{\,{\rm lead,mid}}+\mathfrak G_{\gamma}^{\,{\rm lead,end}}.
$$
Consequently, by \eqref{eq:middle-SP-error}, \eqref{eq:endpoint-error}, and \eqref{eq:exterior-physical-error},
$$\left|\mathfrak G_{\sigma,\gamma}-\mathfrak G_{\gamma}^{\,{\rm lead}}\right|\le
C\sigma^{-1}+C\kappa^{-1}\sigma^{-1/4}+C\kappa^{-1}\sigma^{-1/2}.$$
Since \(\sigma\ge1\), the last term is bounded by
\(C\kappa^{-1}\sigma^{-1/4}\). Hence
$$
\left|\mathfrak G_{\sigma,\gamma}-\mathfrak G_{\gamma}^{\,{\rm lead}}\right|\le C\bigl(
\sigma^{-1}+\kappa^{-1}\sigma^{-1/4}\bigr),
$$
which is precisely \eqref{eq:uniform-SP-error}. This completes the
proof of {\rm (iii)}.    \hfill $\square$
\medskip

\emph{Proof of Lemma \ref{lem:uniform-GO-xi}.}\quad 
Let $\overline{\partial_-S_E\Omega}$ denote the compact closure of the
incoming Euclidean unit sphere bundle, including tangential directions. Strict convexity implies that the length
$\tau_E(x_-,\theta)$ extends continuously to this compact set, with $\tau_E=0$ on tangential directions. Hence $t_\pm$ and the maps
\[
p_0(x_-,\theta)=x_- -\sqrt b\,t_-(x_-,\theta)\theta,\quad p_T(x_-,\theta)=p_0(x_-,\theta)+\sqrt b\,T\,\theta
\]
are continuous on $\overline{\partial_-S_E\Omega}$.

For an incoming ray, $p_0$ lies strictly before the entry point, whereas $p_T$ lies strictly beyond the exit point. If
$(x_-,\theta)$ is tangential, the strict convexity of $\partial\Omega$ implies that the line
$x_-+\mathbb R\theta=\{x_-+r\theta: r\in\mathbb R\}$ intersects $\overline\Omega$ only at $x_-$.
Moreover, \eqref{eq:uniform-time-margin-GO-xi} yields uniform positive
lower bounds for $t_-$ and $T-t_+$. Hence, for every
$(x_-,\theta)\in\overline{\partial_-S_E\Omega}$,
\[
p_0,p_T\in\mathbb R^n\setminus\overline\Omega.
\]
The functions
\[
(x_-,\theta)\mapsto{\rm dist}\bigl(p_0(x_-,\theta),\overline\Omega\bigr),
\quad
(x_-,\theta)\mapsto {\rm dist}\bigl(p_T(x_-,\theta),\overline\Omega\bigr)
\]
are continuous and strictly positive on the compact set $\overline{\partial_-S_E\Omega}$. Their common positive lower bound
defines $d_*>0$, and \eqref{eq:endpoint-center-separation-xi} follows.

Let
\[
R_\Psi=\sup\{|z|:z\in\supp\Psi\}.
\]
Choose
\[
0<\ell_*
<
\frac{d_*}{2\max\{1,R_\Psi\}}.
\]
Since $\supp\Psi_\ell\subset B(0,R_\Psi\ell),$ we obtain \eqref{eq:uniform-endpoint-support-GO-xi}. The transport
recursion preserves the translating support structure of the amplitudes. Therefore, the required initial compatibility conditions for the forward construction and terminal compatibility conditions for the adjoint construction hold uniformly for all incoming rays.

The trajectories
\[
p_0(x_-,\theta)+\sqrt bt\,\theta,
\qquad
0\le t\le T,
\]
remain in a fixed compact subset of $\mathbb R^n$. Since the
coefficient extensions are fixed with uniformly bounded $C^K$ norms
and the transport integrals are taken over intervals of length at most
$T$, repeated differentiation of the transport equations gives
\[
\|\partial_{t,x}^{\beta}a_k\|_{L^\infty}
+
\|\partial_{t,x}^{\beta}d_k\|_{L^\infty}
\le
C_{\beta,k}
\ell^{-\frac n3-|\beta|-2k},
\]
with constants independent of $(x_-,\theta)$. The corresponding error
terms of the truncated forward and adjoint GO constructions satisfy the
same uniform estimates.

The energy estimates used to construct the exact corrections depend
only on the fixed cylinder $\Omega_T$, the coefficient $b$, the time
interval $[0,T]$, and the prescribed bounds for the coefficients.
Hence \eqref{est:remainder-r-sig-GO} holds uniformly for both exact
remainders. The boundary estimates follow from the trace theorem on
the fixed cylinder. The boundary trace estimates are obtained directly on the fixed cylinder
\(\Omega_T\), with constants independent of the incoming ray. Hence
\eqref{eq:GO-boundary-xi} and \eqref{eq:A-s-GO-xi} hold uniformly for
\((x_-,\theta)\in\partial_-S_E\Omega\). The remaining estimates follow
from the corresponding uniform bounds for the amplitudes and the exact
remainders.     \hfill $\square$




\medskip
\noindent {\bf Acknowledgments.} 
S.~Fu was supported by the Fundamental Research Funds for the Central Universities, NPU, under grant number D5000250416, and the Natural Science Basic Research Program of Shaanxi under grant number 2026JC-YBQN-0003.  T. Zhou is partially supported by the National Key Research and Development Program of China under grant number 2024YFA1012301, the Zhejiang Provincial Basic Public Welfare Research Program under grant number LDQ24A010001, the National Natural Science Foundation of China (NSFC) under grant number 12371426.

\noindent {\bf Statements and Declarations.}

\noindent {\bf Data availability statement.}
No datasets were generated or analyzed during the current study.
	
\noindent {\bf Conflict of Interests.} The authors declare that they have no conflicts of interest.
		
\bibliography{refs} 

@article{acosta2022nonlinear,
  title={Nonlinear ultrasound imaging modeled by a {Westervelt} equation},
  author={Acosta, Sebastian and Uhlmann, Gunther and Zhai, Jian},
  journal={SIAM Journal on Applied Mathematics},
  volume={82},
  number={2},
  pages={408--426},
  year={2022},
  publisher={SIAM}
}

@article{Aicha_2015,
doi = {10.1088/0266-5611/31/12/125010},
year = {2015},
publisher = {IOP Publishing},
volume = {31},
number = {12},
pages = {125010},
author = {A\"{i}cha, Ibtissem Ben},
title = {Stability estimate for a hyperbolic inverse problem with time-dependent coefficient},
journal = {Inverse Problems},
}

@article{ArancibiaLecarosMercadoZamorano+2022+659+675,
title = {An inverse problem for the {Moore--Gibson--Thompson} equation arising in high-intensity ultrasound},
author = {Arancibia, Rogelio and Lecaros, Rodrigo and Mercado, Alberto and Zamorano, Sebasti{\'a}n},
pages = {659--675},
volume = {30},
number = {5},
journal = {Journal of Inverse and Ill-posed Problems},
doi = {10.1515/jiip-2020-0090},
year = {2022},
}

@article{babich1981complex,
  title={A complex space-time ray method and ``quasi-photons''},
  author={Babich, Vasilii Mikhailovich and Ulin, V. V.},
  journal={Mathematical Aspects of Wave Propagation Theory. 11},
  pages={5--12},
  year={1981}
}

@article{bao2014sensitivity,
  title={Sensitivity analysis of an inverse problem for the wave equation with caustics},
  author={Bao, Gang and Zhang, Hai},
  journal={Journal of the American Mathematical Society},
  volume={27},
  number={4},
  pages={953--981},
  year={2014}
}

@incollection{belishev2011boundary,
  title={Boundary control method in dynamical inverse problems---an introductory course},
  author={Belishev, Mikhail I},
  booktitle={Dynamical inverse problems: theory and application},
  pages={85--150},
  year={2011},
  publisher={Springer}
}

@inproceedings{belishev1987approach,
  title={An approach to multidimensional inverse problems for the wave equation},
  author={Belishev, Mikhail Igorevich},
  booktitle={Doklady Akademii Nauk},
  volume={297},
  pages={524--527},
  year={1987},
  organization={Russian Academy of Sciences}
}

@article{belishev1992boundary,
  title={Boundary controls and quasiphotons in a {Riemannian} manifold reconstruction problem via dynamical data},
  author={Belishev, Mikhail Igorevich and Kachalov, Alexander Pavlovich},
  journal={Zapiski Nauchnykh Seminarov POMI},
  volume={203},
  pages={21--50},
  year={1992},
  publisher={St. Petersburg Department of the Steklov Institute of Mathematics, Russian Academy of Sciences}
}

@article{bohr2021stability,
  title={Stability of the non-abelian {X}-ray transform in dimension $\geq 3$},
  author={Bohr, Jan},
  journal={The Journal of Geometric Analysis},
  volume={31},
  number={11},
  pages={11226--11269},
  year={2021},
  publisher={Springer}
}

@article{chen2025stable,
  title={Stable inversion of potential in nonlinear wave equations with cubic nonlinearity},
  author={Chen, Xi and Lu, Shuai and Zhang, Ruochong},
  journal={Mathematische Annalen},
  volume={392},
  number={3},
  pages={4283--4314},
  year={2025},
  publisher={Springer}
}

@article{feizmohammadi2021recovery,
  title={Recovery of time-dependent coefficients from boundary data for hyperbolic equations},
  author={Feizmohammadi, Ali and Ilmavirta, Joonas and Kian, Yavar and Oksanen, Lauri},
  journal={Journal of Spectral Theory},
  volume={11},
  number={3},
  pages={1107--1143},
  year={2021}
}

@article{feizmohammadi2021light,
  title={The light ray transform in stationary and static {Lorentzian} geometries},
  author={Feizmohammadi, Ali and Ilmavirta, Joonas and Oksanen, Lauri},
  journal={The Journal of Geometric Analysis},
  volume={31},
  number={4},
  pages={3656--3682},
  year={2021},
  publisher={Springer}
}

@article{feizmohammadi2022recovery,
  title={Recovery of zeroth-order coefficients in nonlinear wave equations},
  author={Feizmohammadi, Ali and Oksanen, Lauri},
  journal={Journal of the Institute of Mathematics of Jussieu},
  volume={21},
  number={2},
  pages={367--393},
  year={2022},
  publisher={Cambridge University Press}
}

@article{fu2024inverse,
  title={Inverse problem of recovering a time-dependent nonlinearity appearing in third-order nonlinear acoustic equations},
  author={Fu, Song-Ren and Yao, Peng-Fei and Yu, Yongyi},
  journal={Inverse Problems},
  volume={40},
  number={7},
  pages={075001},
  year={2024},
  publisher={IOP Publishing}
}

@article{Fu03052026,
author = {Song-Ren Fu and Yongyi Yu},
title = {On the stability of determining a nonlinear coefficient for the third-order nonlinear acoustic equation},
journal = {Applicable Analysis},
volume = {105},
number = {7},
pages = {1456--1472},
year = {2026},
publisher = {Taylor \& Francis},
doi = {10.1080/00036811.2025.2564730},
}

@article{fu2026calderon,
  title={The {Calder{\'o}n} problem for third-order nonlocal wave equations with time-dependent nonlinearities and potentials},
  author={Fu, Song-Ren and Yu, Yongyi and Zimmermann, Philipp},
  journal={Journal of Differential Equations},
  volume={463},
  pages={114164},
  year={2026},
  publisher={Elsevier}
}

@article{greene1976c,
  title={{$C^\infty$} convex functions and manifolds of positive curvature},
  author={Greene, Robert E and Wu, Hung-hsi},
  journal={Acta Math.},
  volume={137},
  pages={209--245},
  year={1976}
}

@book{jost2005riemannian,
  title={{Riemannian Geometry and Geometric Analysis}},
  author={Jost, J{\"u}rgen},
  year={2008},
  publisher={Springer, Berlin}
}

@book{kachalov2001inverse,
  title={{Inverse Boundary Spectral Problems}},
  author={Kachalov, Alexander and Kurylev, Yaroslav and Lassas, Matti},
  year={2001},
  publisher={Chapman and Hall/CRC}
}

@misc{kaltenbacher2025imagingnonlinearitycoefficientsound,
      title={Imaging nonlinearity coefficient and sound speed with the {JMGT} equation in the frequency domain},
      author={Barbara Kaltenbacher},
      year={2025},
      eprint={2512.18431},
      archivePrefix={arXiv},
      primaryClass={math.AP},
      note={arXiv:2512.18431},
      url={https://arxiv.org/abs/2512.18431},
}

@article{kaltenbacher2025acoustic,
  title={Acoustic nonlinearity parameter tomography with the {Jordan--Moore--Gibson--Thompson} equation in the frequency domain},
  author={Kaltenbacher, Barbara},
  journal={Inverse Problems},
  volume={41},
  number={9},
  pages={095010},
  year={2025},
  publisher={IOP Publishing}
}

@article{BKILR,
  author = {Kaltenbacher, Barbara and Lasiecka, Irena and Marchand, Richard},
  year = {2011},
  title = {Well-posedness and exponential decay rates for the {Moore--Gibson--Thompson} equation arising in high-intensity ultrasound},
  journal = {Control Cybernet.},
  volume = {40},
  number = {4},
  pages = {971--988},
}

@article{BKVN,
  author = {Kaltenbacher, Barbara and Nikoli{\'c}, Vanja},
  year = {2021},
  title = {The inviscid limit of third-order linear and nonlinear acoustic equations},
  journal = {SIAM J. Appl. Math.},
  volume = {81},
  number = {4},
  pages = {1461--1482},
}

@article{kian2017unique,
  title={Unique determination of a time-dependent potential for wave equations from partial data},
  author={Kian, Yavar},
  journal={Annales de l'Institut Henri Poincar{\'e} C, Analyse non lin{\'e}aire},
  volume={34},
  number={4},
  pages={973--990},
  year={2017},
}

@article{kian2016stability,
  title={Stability in the determination of a time-dependent coefficient for wave equations from partial data},
  author={Kian, Yavar},
  journal={Journal of Mathematical Analysis and Applications},
  volume={436},
  number={1},
  pages={408--428},
  year={2016},
  publisher={Elsevier}
}

@article{kian2016recovery,
  title={Recovery of time-dependent damping coefficients and potentials appearing in wave equations from partial data},
  author={Kian, Yavar},
  journal={SIAM Journal on Mathematical Analysis},
  volume={48},
  number={6},
  pages={4021--4046},
  year={2016},
  publisher={SIAM}
}

@article{kumar2026holder,
  title={H{\"o}lder stability estimates for the determination of time-independent potentials in a relativistic wave equation in an infinite waveguide},
  author={Kumar, Mandeep and Zimmermann, Philipp},
  journal={Inverse Problems},
  volume={42},
  number={3},
  pages={035004},
  year={2026},
  publisher={IOP Publishing}
}

@article{kurylev2018inverse,
  title={Inverse problems for {Lorentzian} manifolds and nonlinear hyperbolic equations},
  author={Kurylev, Yaroslav and Lassas, Matti and Uhlmann, Gunther},
  journal={Inventiones Mathematicae},
  volume={212},
  number={3},
  pages={781--857},
  year={2018},
  publisher={Springer}
}

@article{lai2024partial,
  title={Partial data inverse problems for the nonlinear time-dependent {Schr\"odinger} equation},
  author={Lai, Ru-Yu and Lu, Xuezhu and Zhou, Ting},
  journal={SIAM Journal on Mathematical Analysis},
  volume={56},
  number={4},
  pages={4712--4741},
  year={2024},
  publisher={SIAM}
}

@article{LASIECKA20157610,
title = {{Moore--Gibson--Thompson} equation with memory, part {II}: general decay of energy},
journal = {Journal of Differential Equations},
volume = {259},
number = {12},
pages = {7610--7635},
year = {2015},
author = {Lasiecka, Irena and Wang, Xiaojun},
}

@article{lassas2025stability,
  title={Stability and {Lorentzian} geometry for an inverse problem of a semilinear wave equation},
  author={Lassas, Matti and Liimatainen, Tony and Potenciano-Machado, Leyter and Tyni, Teemu},
  journal={Analysis \& PDE},
  volume={18},
  number={5},
  pages={1065--1118},
  year={2025},
  publisher={Mathematical Sciences Publishers}
}

@article{lassas2022uniqueness,
  title={Uniqueness, reconstruction and stability for an inverse problem of a semilinear wave equation},
  author={Lassas, Matti and Liimatainen, Tony and Potenciano-Machado, Leyter and Tyni, Teemu},
  journal={Journal of Differential Equations},
  volume={337},
  pages={395--435},
  year={2022},
  publisher={Elsevier}
}

@article{lassas2020light,
  title={The light ray transform on {Lorentzian} manifolds},
  author={Lassas, Matti and Oksanen, Lauri and Stefanov, Plamen and Uhlmann, Gunther},
  journal={Communications in Mathematical Physics},
  volume={377},
  number={2},
  pages={1349--1379},
  year={2020},
  publisher={Springer}
}

@misc{liu2025partial,
  title={On a partial-data inverse problem for the semilinear wave equation},
  author={Liu, Boya and Wang, Weinan},
  year={2025},
  note={arXiv:2511.08794}
}

@article{LiuTriggiani+2013+825+869,
title = {An inverse problem for a third-order {PDE} arising in high-intensity ultrasound: global uniqueness and stability by one boundary measurement},
author = {Liu, Shitao and Triggiani, Roberto},
pages = {825--869},
volume = {21},
number = {6},
journal = {Journal of Inverse and Ill-Posed Problems},
doi = {10.1515/jip-2012-0096},
year = {2013},
}

@article{LIZAMA20197813,
title = {Controllability results for the {Moore--Gibson--Thompson} equation arising in nonlinear acoustics},
journal = {Journal of Differential Equations},
volume = {266},
number = {12},
pages = {7813--7843},
year = {2019},
author = {Lizama, Carlos and Zamorano, Sebasti{\'a}n},
}

@article{lizama2023boundary,
  title={Boundary controllability for the {1D} {Moore--Gibson--Thompson} equation},
  author={Lizama, Carlos and Zamorano, Sebastian},
  journal={Meccanica},
  volume={58},
  number={6},
  pages={1031--1038},
  year={2023},
  publisher={Springer}
}

@article{oksanen2025interplay,
  title={On the interplay between the light ray and the magnetic {X}-ray transforms},
  author={Oksanen, Lauri and Paternain, Gabriel P and Sarkkinen, Miika},
  journal={SIAM Journal on Mathematical Analysis},
  volume={57},
  number={6},
  pages={6522--6541},
  year={2025},
  publisher={SIAM}
}

@article{paternain2019geodesic,
  title={The geodesic {X}-ray transform with matrix weights},
  author={Paternain, Gabriel P and Salo, Mikko and Uhlmann, G{\"u}nther and Zhou, Hanming},
  journal={American Journal of Mathematics},
  volume={141},
  number={6},
  pages={1707--1750},
  year={2019},
  publisher={Johns Hopkins University Press}
}

@book{paternain2023geometric,
  title={{Geometric Inverse Problems}},
  author={Paternain, Gabriel P and Salo, Mikko and Uhlmann, Gunther},
  volume={204},
  year={2023},
  publisher={Cambridge University Press}
}

@misc{qiu2026gauge,
  title={Gauge symmetry and uniqueness in inverse problems for the {JMGT} equation},
  author={Qiu, Dong and Xu, Xiang and Ye, Yeqiong and Zhou, Ting},
  year={2026},
  note={arXiv:2604.28023}
}

@misc{qiu2026inverse,
  title={Inverse boundary value problems of determining nonlinear coefficients for the {JMGT} equation},
  author={Qiu, Dong and Xu, Xiang and Ye, Yeqiong and Zhou, Ting},
  year={2026},
  note={arXiv:2603.14194}
}

@article{stefanov2018inverse,
  title={The inverse problem for the {Dirichlet-to-Neumann} map on {Lorentzian} manifolds},
  author={Stefanov, Plamen and Yang, Yang},
  journal={Analysis \& PDE},
  volume={11},
  number={6},
  pages={1381--1414},
  year={2018},
  publisher={Mathematical Sciences Publishers}
}

@article{SU04,
  title={Stability estimates for the {X}-ray transform of tensor fields and boundary rigidity},
  author={Stefanov, Plamen and Uhlmann, Gunther},
  journal={Duke Math. J.},
  volume={123},
  number={3},
  pages={445--467},
  year={2004},
  doi={10.1215/S0012-7094-04-12332-2}
}

@article{uhlmann2016inverse,
  title={The inverse problem for the local geodesic ray transform},
  author={Uhlmann, Gunther and Vasy, Andr{\'a}s},
  journal={Inventiones Mathematicae},
  volume={205},
  number={1},
  pages={83--120},
  year={2016},
  publisher={Springer}
}

@article{uhlmann2021inverse,
  title={On an inverse boundary value problem for a nonlinear elastic wave equation},
  author={Uhlmann, Gunther and Zhai, Jian},
  journal={Journal de Math{\'e}matiques Pures et Appliqu{\'e}es},
  volume={153},
  pages={114--136},
  year={2021},
  publisher={Elsevier}
}

@article{uhlmann2023inverse,
  title={An inverse boundary value problem arising in nonlinear acoustics},
  author={Uhlmann, Gunther and Zhang, Yang},
  journal={SIAM Journal on Mathematical Analysis},
  volume={55},
  number={2},
  pages={1364--1404},
  year={2023},
  publisher={SIAM}
}

@article{vasy2021light,
  title={On the light ray transform of wave equation solutions},
  author={Vasy, Andr{\'a}s and Wang, Yiran},
  journal={Communications in Mathematical Physics},
  volume={384},
  number={1},
  pages={503--532},
  year={2021},
  publisher={Springer}
}

@misc{wendels2025stable,
  title={Stable determination of the nonlinear parameter in the nondiffusive {Westervelt} equation from the {Dirichlet-to-Neumann} map},
  author={Wendels, Mike},
  year={2025},
  note={arXiv:2510.02553}
}

@book{yao2011modeling,
  title={{Modeling and Control in Vibrational and Structural Dynamics: A Differential Geometric Approach}},
  author={Yao, Peng-Fei},
  year={2011},
  publisher={CRC Press}
}
	
\bibliographystyle{alpha}



\end{document}